\newtheorem{theorem}{Theorem}[section]
\newtheorem{definition}[theorem]{Definition}
\newtheorem{remark}{Remark}[theorem]
\newtheorem{corollary}[theorem]{Corollary}
\newtheorem{lemma}[theorem]{Lemma}
\newtheorem{example}{Example}[section]
\title{\Huge An Elementary Expository Study: \\ From Metric Spaces to Hilbert Spaces \\[1cm]}
\author{Ismail Gemaledin, Iusuf Gemaledin}
\date{October 2025}
\begin{document}

\maketitle

\newpage
\begin{abstract}
Metric spaces are a fundamental component of mathematics and have a paramount importance as a framework for measuring distance. They can be found in many different branches of mathematics, such as analysis and topology. This paper offers an elementary exposition of metric spaces and their associated topologies. We start by recalling the basic axioms through which we understand a metric and examine various examples. The induced topology is next discussed with emphasis on open and closed sets, continuity and limits. In addition, we compare equivalent metric spaces and illustrate how different metrics can generate but the same topological structure. The presentation is designed to be easy to follow and accessible to undergraduate students, by combining classical definitions with illustrative examples that allow a deeper understanding of the aforementioned concepts.
\end{abstract}

\newpage
\tableofcontents

\newpage
\section{Introduction}

The development of point-set topology owes much to the groundbreaking work of Georg Cantor in the late 19th century. Cantor introduced the theory of sets of points (Punktmannigfaltigkeitslehre) through a series of influential papers (1879–1883), establishing key concepts such as limit points, derived sets, dense and nowhere dense sets, and the famous Cantor “middle-third” set. His work provided mathematicians with powerful tools to analyze the structure of functions and spaces in a rigorous way. 

Building on Cantor’s ideas, Giuseppe Peano and Camille Jordan applied set-theoretic methods to geometry. Peano’s construction of a space-filling curve challenged intuitive notions of continuity and dimension, while Jordan formalized the Jordan curve theorem, proving that a simple closed curve divides the plane into an interior and exterior region. These contributions demonstrated the usefulness of set-theoretic thinking in both analysis and geometry. 

Around the turn of the 20th century, mathematicians such as Adolf Hurwitz and Arthur Schoenflies worked to systematize the study of point sets and topological invariants. Hurwitz proposed a framework for classifying point sets via continuous mappings, and Schoenflies expanded this work, rigorously developing planar topology and promoting Cantor’s methods. This era also witnessed the emergence of so-called “pathological” examples, such as curves with counterintuitive properties studied by Peano, Osgood, and Lebesgue. While initially surprising, these examples played a crucial role in shaping modern topology, showing the importance of precise definitions and providing deep insight into metric and topological structures. 

The notion of a metric space naturally arises from our intuitive understanding of distance. In mathematics, we often wish to measure how “close” or “far apart” two objects are, whether they are points on a line, functions, or more abstract entities. A metric provides a formal and rigorous way to capture this concept, allowing us to study convergence, continuity, and other foundational ideas systematically. Metric spaces form a cornerstone of modern mathematics, underpinning essential concepts in analysis, topology, and geometry, and finding applications in fields such as physics, computer science, and data analysis. 

The purpose of this expository paper is to present an accessible overview of metric spaces and their related notions. We draw on several standard sources, including Mendelson, Munkres, Kelley, Kasriel, and Schaum’s Outline, and integrate their material into a coherent exposition. The aim is to combine definitions, theorems, and illustrative examples in a way that is approachable for undergraduate students, while still capturing the rigor and depth of the subject. 

The paper is structured to guide the reader from foundational concepts to more advanced topics. It begins with metrics and pseudo-metrics, establishing the basic definitions and examples. Subsequent sections explore metric spaces, continuity, open balls and neighborhoods, and limits, providing the core concepts needed to understand topology in metric settings. Later, we examine open and closed sets, subspaces, and equivalent metrics, highlighting key results and examples that illustrate relationships between different structures. Finally, the article concludes with a discussion of infinite-dimensional Euclidean spaces, showing how these ideas extend to more abstract settings.

Beyond its theoretical elegance, the concept of a metric space has numerous applications — from defining distances between functions and probability distributions to measuring similarity in data science and machine learning. These modern uses highlight the enduring importance of metric structures in both pure and applied mathematics.

\newpage
\section{Metrics}

Before introducing metric spaces, it is essential to formalize the concept of "distance" between elements of a set. A metric, or distance function, provides such a formalization, assigning a non-negative real number to every pair of points and thereby quantifying their separation. This definition allows the notion of distance to be extended beyond familiar geometric contexts to more abstract sets, laying the foundation for the subsequent study of metric and topological structures. 

\begin{definition}[see \cite{Mendelson, Munkres, Kasriel, Armstrong, Ward, Viro}]
A metric is a function, we will write it as “d”, on a set X, usually known as the \emph{underlying set}. Also known as a distance function, the metric takes values in $\mathbb{R}$ to show the distance between two points, which we will usually name x and y. Therefore, “d” takes this form: 
\[d : X \times X \to \mathbb{R}\] 

\textit{If the function $d$ has the property that}
\[d(x, x) = 0, \quad \forall x \in X,\]
then it is called a \emph{pseudometric}. Most of the properties of said \emph{pseudometric} are the same as the one of the metric. If, in addition to the aforementioned axiom, the following four hold:

\begin{enumerate}
    \item $d(x, y) \geq 0$ $\forall$ $x, y \in X$;
    \item $d(x, y) = 0$ $\iff$ $x = y$ $\forall$ $x, y \in X$;
    \item $d(x, y) = d(y, x)$ $\forall$ $x, y \in X$;
    \item $d(x, z) \leq d(x, y) + d(y, z)$ $\forall$ $x, y, z \in X$;
\end{enumerate}
then the function $d$ is called a \emph{metric}. All four axioms must hold simultaneously.
\end{definition}

\begin{remark}
    The first axiom is self-explanatory; since all distances are naturally positive, metrics will always belong to the set $\mathbb{R}^+$.
\end{remark}

\begin{remark}
    The second axiom, when taking into consideration the latter x = y $\in$ X, becomes a retelling of the axiom of the pseudometric, that \[d(x, x) = 0, \quad \forall x \in X.\] Since the function "d" shows the distance of the two points it accepts as parameters, it is obvious that the distance between one point and itself is always equal to zero, irrespective of where it stands.
\end{remark}

\begin{remark}
    The third axiom expresses the symmetry of the said metric. The distance from point x to point y will always be equal to the distance between point y and point x.
\end{remark}

\begin{remark}
    The last of the axioms is called the \textbf{Triangle Inequality}, which states that any one of triangle's sides' length will always be smaller than the sum of the other two's lengths.
\end{remark}

\begin{center}
\begin{tikzpicture}[scale=3]
    \coordinate (x) at (0,0);
    \coordinate (y) at (2,0);
    \coordinate (z) at (0.8,1.5);

    \draw (x) -- (y) -- (z) -- cycle;

    \node[left]  at (x) {$x$};
    \node[right] at (y) {$y$};
    \node[above] at (z) {$z$};

    \node[below] at ($(x)!0.5!(y)$) {$d(x, y)$};
    \node[right] at ($(y)!0.5!(z)$) {$d(y, z)$};
    \node[left]  at ($(x)!0.5!(z)$) {$d(x, z)$};
\end{tikzpicture}
\end{center}

\newpage

The concept is best illustrated by the following examples, especially using the renowned examples, such as the usual metric and the trivial metric. [see \cite{Schaum}]

\begin{example}[see \cite{Schaum}]
    The usual metric on the set $\mathbb{R}$ is the function:
    \[d(x, y) = |x - y|, \quad x, y \in \mathbb{R} \]

    The usual metric on the $\mathbb{R}^2$ is very similar:
    \[d(a, b) = \sqrt{(x_1 - y_1)^2 \; + \; (x_2 - y_2)^2}, \quad x, y \in \mathbb{R},\]
    where a has the coordinates $x_1, x_2$ and where b has the coordinates $y_1, y_2$.
\end{example}

\begin{example}[see \cite{Kasriel}]
We can also have this metric in $\mathbb{R}^2$.
Let $k > 0$ and $d : \mathbf{R}^n \times \mathbf{R}^n \rightarrow \mathbf{R}$ be given by
\[
d(x, y) = k |x - y|, \quad \forall x, y\in \mathbf{R}^n.
\]
\end{example}

\begin{example}[see \cite{Schaum}]
    The trivial metric on a non-empty set X is the function "d", which looks like:
    \[d(x, y) = 
    \begin{cases}
        0, & x = y, \\
        1, & x \neq y.
    \end{cases}\]
    
\end{example}

\begin{example}[see \cite{Kasriel}]
    The \emph{Euclidean Metric} is the formula for the Euclidean distance for $\mathbb{R}^n$ and it takes the form:
    \[d(x, y) = |x - y| = [(x_1 - y_1)^2 + (x_2 - y_2)^2 + ... + (x_n - y_n)^2]^{1\over2}. \]
\end{example}

\begin{remark}[see \cite{Kasriel}]
    Let there be two points in the set $\mathbb{R}^n$, $x = (x_1, x_2, x_3, ..., x_n)$ and $y = (y_1, y_2, y_3, ..., y_n).$ For all $a \in P_n,$ we first have that the distance from x to y is always equal to or greater than the absolute value of the difference of $x_a$ and $y_a$:
    \[ d(x, y) \geqslant |x_a - y_a|. \]
    We also get the following inequality:
    \[ d(x, y) \leqslant \sqrt{n} \; max \{ \ | x_a - y_a | : a \in P_n \} \ \]
\end{remark}

\begin{example}[see \cite{Kasriel}]
    If we had a function $d : \mathbb{R}^2 \to \mathbb{R}^2$, it could also take the following form, which is also a metric:
    \[
    d(x, y) = \max \{ |x_a - y_a| : a = 1, 2 \}.
    \]
    In this case, a can take values one and two, and $x = (x_1, x_2), y = (y_1, y_2).$
\end{example}

\begin{example}[see \cite{Munkres}]
The following is known as the \emph{square metric} and is another well known example:
\[
d(x, y) = \max\{|x_1 - y_1|, ... , |x_n - y_n|\}.
\]
\end{example}

\begin{definition}[see \cite{Munkres}]
The \emph{norm} of x, where $x = (x_1, x_2, ..., x_n), \; x, x_1, x_2, ... , x_n \in \mathbb{R}^n$, is defined through the following equation:

\[
\|x\| = (x_1^2 + x_2^2 + ... + x_n^2)^{1/2}.
\]
\end{definition}

The concept of a metric, as defined above, allows us to measure distance in a precise and abstract way. However, the true power of this idea emerges when we consider the set on which the metric is defined. A metric does not exist in isolation—it provides structure to a set by introducing the notion of distance between its elements. This structure gives rise to what we call a \emph{metric space}. 

By studying metric spaces, we are able to explore how the properties of distance influence the behavior of points, sequences, and functions defined on a given set. In the next section, we formally introduce the definition of a metric space and illustrate how it provides the foundation for many of the key concepts in analysis and topology.


\newpage
\section{Metric Spaces}

Now that we know what a metric is, we should be able to define and observe \emph{metric spaces.} They are comprised of two elements, a set and a distance function defined on said set, which has to satisfy all the aforementioned axioms. These \emph{metric spaces} represent one of most studied objects in analysis and point set topology and offer many concepts to be defined, such as convergence, continuity or compactness.

\begin{definition}[Metric Space][see \cite{Mendelson, Munkres, Kasriel}]

We will call a \emph{metric space} a pair (X, d), where X is a set and d is a metric on that set for which all of the four axioms defined in the second chapter of this paper.

\end{definition}

\begin{remark} [see \cite{Kasriel}]
    It has to be said that a single set can have many different metric spaces, since many different metrics can be defined on the same set.
\end{remark}

\begin{example}[see \cite{Kasriel}]
Assume the following function $d : \mathbf{R}^2 \times \mathbf{R}^2 \rightarrow \mathbf{R}$ given by
\[
d(x, y) = |x_1 - y_1| + |x_2 - y_2|
\]
where $x = (x_1, x_2)$ and $y = (y_1, y_2)$. This is a metric, and the pair $(\mathbb{R}^2, d)$ is the metric space.
\end{example}

\begin{remark}[see \cite{Mendelson}]
    Normally, a metric space should be written as the pair $(X, d : X \times X \to \mathbb{R})$, but in most cases the sets involved can be easily deduced from the context, so we shorten it to just $(X, d).$
\end{remark}

\begin{example}[see \cite{Mendelson, Kasriel}]
Imagine the function $d : \mathbf{R}^2 \times \mathbf{R}^2 \to \mathbf{R}$, where $d$ is:
\[
d(x, y) = \frac{|x - y|}{1 + |x - y|}.
\]
The function $d$ is a metric for the set $\mathbf{R}^2$, and the pair $(\mathbf{R}^2, d)$ is a metric space.
\end{example}

\begin{example}[see \cite{Kasriel}]
We can also have $x$ and $y$ take rational values.
    \[
    d(x, y) = |x - y|, \quad \forall x, y \in \mathbb{Q}.
    \]
    $(\mathbf{Q}, d)$ is a metric space, even though $d$ takes values in $\mathbb{R}$.
\end{example}

\begin{remark}[see \cite{Kasriel}]
    $d : \mathbf{Q} \times \mathbf{Q} \to \mathbf{R}$ is called the \emph{restriction} to $\mathbf{Q} \times \mathbf{Q}$ of the Euclidean metric for $\mathbb{R}$.
\end{remark}

\begin{definition}[see \cite{Munkres}]
Let \( A \) be a subset of \( X \), where the pair $(X, d)$ is a metric space. \( A \) is then called \textbf{bounded} if there is some number \( M \) such that, for every pair $(x, y)$ of any two points in \( A \), 
\[
d(a_1, a_2) \leq M.
\]
When the subset \( A \) is bounded and nonempty,
\[
\operatorname{diam} A = \sup \{ d(a_1, a_2) \mid a_1, a_2 \in A \}.
\]
is called the \textbf{diameter} of said subset \( A \).
\end{definition}

\begin{remark}[see \cite{Munkres}]
    Boundedness is not among the topological properties of a set, since it is dependent on a metric \( d \) on said set \( X \). For example, if we have $(X, d)$ as a metric space, the metric $\tilde{d}$ gives the topology of $X$, to which is bounded every single subset of the set.
\end{remark}

\begin{theorem}[see \cite{Munkres}]
Let $(X, d)$ be our metric space, where $d$ is our metric. We then have $\tilde{d} : X \times X \to \mathbb{R}, where:$

\(
    \tilde{d}(x, y) = \min \{ d(x, y), 1 \}.
\)

Both metric $d$ and metric $\tilde{d}$ induce the same \emph{topology}.

\end{theorem}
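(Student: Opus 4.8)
The plan is to proceed in two stages: first confirm that $\tilde d$ is genuinely a metric, and then show that the two metrics possess a common basis of open balls, which forces the induced topologies to agree.

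For the first stage, the non-negativity axiom, the condition $\tilde d(x,y)=0 \iff x=y$, and symmetry are all inherited immediately from $d$ together with the observation that, for $t\ge 0$, one has $\min\{t,1\}=0 \iff t=0$ and $\min\{\cdot,1\}$ does not disturb symmetry. The only axiom requiring work is the triangle inequality $\tilde d(x,y)\le \tilde d(x,z)+\tilde d(z,y)$. I would reduce this to the elementary real-variable inequality $\min\{a+b,1\}\le \min\{a,1\}+\min\{b,1\}$ valid for all $a,b\ge 0$: if $a\ge 1$ or $b\ge 1$ the right-hand side is at least $1\ge \min\{a+b,1\}$, whereas if $a,b<1$ the right-hand side equals $a+b\ge \min\{a+b,1\}$. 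Applying this with $a=d(x,z)$, $b=d(z,y)$, and combining with the triangle inequality for $d$ (which gives $\min\{d(x,y),1\}\le\min\{d(x,z)+d(z,y),1\}$), completes the verification.

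For the second stage, recall that the topology induced by a metric $\rho$ on $X$ has as basic open sets the open balls $B_\rho(x,\varepsilon)=\{\,y\in X : \rho(x,y)<\varepsilon\,\}$, and that to prove two metrics induce the same topology it suffices to show that every $d$-ball about a point contains a $\tilde d$-ball about the same point, and conversely. The decisive observation is that for any $x\in X$ and any radius $0<\varepsilon\le 1$ we have $B_d(x,\varepsilon)=B_{\tilde d}(x,\varepsilon)$: indeed, for such $\varepsilon$ the inequality $d(x,y)<\varepsilon$ holds if and only if $\min\{d(x,y),1\}<\varepsilon$, since when $d(x,y)\ge 1$ both inequalities fail, and when $d(x,y)<1$ the two quantities being compared to $\varepsilon$ are equal.

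Given this, the conclusion is immediate. For an arbitrary $d$-ball $B_d(x,\varepsilon)$, set $\varepsilon'=\min\{\varepsilon,1\}$; then $B_{\tilde d}(x,\varepsilon')=B_d(x,\varepsilon')\subseteq B_d(x,\varepsilon)$, so every $d$-open set is $\tilde d$-open. Symmetrically, for a $\tilde d$-ball $B_{\tilde d}(x,\varepsilon)$ the same $\varepsilon'$ yields $B_d(x,\varepsilon')=B_{\tilde d}(x,\varepsilon')\subseteq B_{\tilde d}(x,\varepsilon)$, so every $\tilde d$-open set is $d$-open; hence the two topologies coincide. I do not anticipate a genuine obstacle: the only points demanding care are the triangle inequality for $\tilde d$ and the clean invocation of the ``refinement of balls'' criterion, everything else being a short case check. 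Note also that $\tilde d(x,y)\le 1$ for all $x,y$, so the theorem simultaneously realizes the situation described in the preceding remark — a bounded metric inducing the same topology as a possibly unbounded one.
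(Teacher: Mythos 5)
Your proposal is correct and follows essentially the same route as the paper: both verify the metric axioms with the triangle inequality handled by the same case split (some distance $\ge 1$ versus both $< 1$), and both conclude by noting that the $d$-balls and $\tilde d$-balls of radius at most $1$ coincide, so small balls give a common basis. Your packaging of the triangle inequality via $\min\{a+b,1\}\le\min\{a,1\}+\min\{b,1\}$ and your explicit use of the ball-inclusion criterion are only cosmetic variations on the paper's argument.
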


The metric \( \tilde{d} \) is called the \textbf{standard bounded metric} corresponding to \( d \).

\begin{proof}[Proof (see \cite{Munkres})]
    
We begin by verifying that $\tilde{d}$ satisfies the four axioms of a metric.  
The first three properties are immediate: $\tilde{d}(x, y) \ge 0$ for all $x, y$, $\tilde{d}(x, y) = 0$ holds if and only if $x = y$, and the metric always has the property of symmetry, for the distance between x and y is in this case equal to the distance from y to x.
Thus, the only nontrivial part to check is the triangle inequality.

We must show that
\[
\tilde{d}(x, z) \le \tilde{d}(x, y) + \tilde{d}(y, z), \forall x, y, z \in X.
\]

Consider first the case when at least one of $d(x, y)$ or $d(y, z)$ is greater than or equal to $1$.  
In that situation, the right-hand side of the inequality is at least $1$, while $\tilde{d}(x, z)$ is, by definition, never larger than $1$.  
Hence, the inequality holds automatically.

It remains to examine the case where both $d(x, y) < 1$ and $d(y, z) < 1$.  
For such points, we have $\tilde{d}(x, y) = d(x, y)$ and $\tilde{d}(y, z) = d(y, z)$.  
Using the triangle inequality for $d$, we obtain
\[
d(x, z) \le d(x, y) + d(y, z)
   = \tilde{d}(x, y) + \tilde{d}(y, z).
\]
Since $\tilde{d}(x, z) \le d(x, z)$ by the definition of $\tilde{d}$, the triangle inequality for $\tilde{d}$ follows.

Finally, we compare the topologies generated by $d$ and $\tilde{d}$.  
In any metric space, the collection of open balls with radius $\varepsilon < 1$ around each point forms a basis for the topology.  
Because for such $\varepsilon$ the balls defined by $d$ and those defined by $\tilde{d}$ coincide, both metrics produce the same open sets.  
Therefore, $d$ and $\tilde{d}$ generate identical topologies on $X$.
\end{proof}

\begin{lemma}[see \cite{Munkres}]
Let \( d \) and \( d' \) be two metrics defined on the same set \( X \), and let \( \mathcal{T} \) and \( \mathcal{T}' \) denote the topologies generated by each of them, respectively. Then \( \mathcal{T}' \) is finer than \( \mathcal{T} \) if and only if, for every \( x \in X \) and every \( \epsilon > 0 \), there exists some \( \delta > 0 \) such that
\[
B_{d'}(x, \delta) \subset B_d(x, \epsilon).
\]
\end{lemma}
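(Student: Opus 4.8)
The plan is to reduce everything to the standard fact that in any metric space the open balls form a basis for the induced topology, so that a set $U$ is open precisely when every point of $U$ is the centre of some ball contained in $U$. One must also keep in mind the meaning of \emph{finer}: saying $\mathcal{T}'$ is finer than $\mathcal{T}$ means $\mathcal{T} \subseteq \mathcal{T}'$, that is, every $d$-open set is also $d'$-open. With these two facts in hand the proof splits cleanly into the two implications of the biconditional.

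For the forward direction I would assume $\mathcal{T} \subseteq \mathcal{T}'$ and fix an arbitrary $x \in X$ and $\epsilon > 0$. The ball $B_d(x, \epsilon)$ belongs to $\mathcal{T}$, hence to $\mathcal{T}'$; since it is a $d'$-open set containing $x$, the basis characterisation of $\mathcal{T}'$ produces a radius $\delta > 0$ with $B_{d'}(x, \delta) \subseteq B_d(x, \epsilon)$, which is exactly the asserted inclusion.

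For the converse I would assume the ball condition and take an arbitrary $U \in \mathcal{T}$, aiming to show $U \in \mathcal{T}'$. Given $x \in U$, openness of $U$ with respect to $d$ provides an $\epsilon > 0$ with $B_d(x, \epsilon) \subseteq U$; the hypothesis then provides a $\delta > 0$ with $B_{d'}(x, \delta) \subseteq B_d(x, \epsilon) \subseteq U$. Since $x \in U$ was arbitrary, every point of $U$ is the centre of a $d'$-ball lying inside $U$, so $U$ is $d'$-open, and therefore $\mathcal{T} \subseteq \mathcal{T}'$.

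I do not expect any genuine obstacle here: the only points requiring care are getting the direction of the inclusion right in the definition of "finer" (more open sets, not fewer) and remembering that openness may be tested pointwise via balls rather than on all of $\mathcal{T}$ at once. An alternative, slightly slicker route would be to invoke the general basis-comparison lemma for topologies and merely note that each metric ball $B_d(x,\epsilon)$ is a basis element of $\mathcal{T}$ containing $x$, while conversely any basis element of $\mathcal{T}$ containing a point $y$ contains a smaller $d$-ball centred at $y$; but in this elementary setting writing the argument out directly as above is the most transparent presentation.
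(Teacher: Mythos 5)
Your proof is correct and takes essentially the same route as the paper's: both directions reduce to the fact that metric balls form a basis for each induced topology, the forward direction by noting that $B_d(x,\epsilon)$, being $\mathcal{T}$-open, is $\mathcal{T}'$-open and hence contains a $d'$-ball centred at $x$, and the converse by testing openness of an arbitrary $U \in \mathcal{T}$ pointwise. The only difference is presentational: the paper delegates the comparison step to Munkres's basis-criterion (its ``Lemma 13.3''), whereas you unfold that criterion directly, which makes your argument self-contained.
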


\begin{proof}[see \cite{Munkres}]
Assume first that \( \mathcal{T}' \) is finer than \( \mathcal{T} \).  
Given any open ball \( B_d(x, \epsilon) \) belonging to the topology \( \mathcal{T} \), Lemma 13.3 guarantees that there exists a basic open set \( B' \) in \( \mathcal{T}' \) such that \( x \in B' \subset B_d(x, \epsilon) \).  
Since \( B' \) is open in \( \mathcal{T}' \), one can find a \( d' \)-ball \( B_{d'}(x, \delta) \) centered at \( x \) contained entirely in \( B' \), and therefore in \( B_d(x, \epsilon) \).

Conversely, suppose that for each \( x \in X \) and each \( \epsilon > 0 \), there exists a \( \delta > 0 \) satisfying \( B_{d'}(x, \delta) \subset B_d(x, \epsilon) \).  
Take any basic open set \( B_d(x, \epsilon) \) of \( \mathcal{T} \) that contains \( x \).  
By hypothesis, there is a \( \delta > 0 \) such that \( B_{d'}(x, \delta) \subset B_d(x, \epsilon) \), showing that every \( \mathcal{T} \)-open neighborhood of \( x \) contains a \( \mathcal{T}' \)-open neighborhood of \( x \).  
Hence, according to Lemma 13.3, \( \mathcal{T}' \) must be finer than \( \mathcal{T} \).
\end{proof}

\begin{example}
Consider the real line \( \mathbb{R} \) with two metrics:
\[
d(x, y) = |x - y| \quad \text{and} \quad d'(x, y) = \min\{|x - y|, 1\}.
\]
Here, \( d' \) "caps" distances at 1.  
For any point \( x \in \mathbb{R} \) and any \( \epsilon > 0 \), if we choose \( \delta = \min\{\epsilon, 1\} \), we have
\[
B_{d'}(x, \delta) \subset B_d(x, \epsilon).
\]
Thus, both induce the same topology — the usual topology on \( \mathbb{R} \).  
This illustrates how the lemma connects the inclusion of metric balls to the inclusion of the induced topologies.
\end{example}

\begin{definition}[see \cite{Munkres}]
Two metrics \( d_1 \) and \( d_2 \) on a set \( X \) are called \textbf{equivalent} if both generate the same topology on \( X \).
\end{definition}

\begin{example}
On \( \mathbb{R}^2 \), define
\[
d_1(x, y) = \sqrt{(x_1 - y_1)^2 + (x_2 - y_2)^2}, \quad 
d_2(x, y) = |x_1 - y_1| + |x_2 - y_2|.
\]
Although these two metrics measure distance differently (Euclidean vs. Manhattan), every open ball under one metric contains a smaller ball under the other.  
Hence, they define the same open sets — both correspond to the standard topology on \( \mathbb{R}^2 \).
\end{example}

\begin{theorem}[see \cite{Munkres}]
The topologies induced on \( \mathbb{R}^n \) by the Euclidean metric \( d \) and the square (or maximum) metric \( \rho \) coincide with the standard product topology on \( \mathbb{R}^n \).
\end{theorem}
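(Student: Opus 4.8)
The plan is to exploit the comparison lemma proved above, reducing everything to simple inequalities between the three relevant notions of ``size'' of a vector in $\mathbb{R}^n$. Write $d$ for the Euclidean metric and $\rho(x,y)=\max\{|x_1-y_1|,\dots,|x_n-y_n|\}$ for the square metric. First I would record the two elementary estimates
\[
\rho(x,y)\;\le\; d(x,y)\;\le\;\sqrt{n}\,\rho(x,y),\qquad\forall\, x,y\in\mathbb{R}^n,
\]
which are exactly the inequalities noted in the Remark following the Euclidean-metric example: the left bound holds because each coordinate difference $|x_a-y_a|$ is at most $d(x,y)$, and the right bound holds because $d(x,y)^2=\sum_{a}(x_a-y_a)^2\le n\,\rho(x,y)^2$.

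From these estimates the equivalence of $d$ and $\rho$ is immediate via the lemma. Since $\rho\le d$, we have $B_d(x,\varepsilon)\subset B_\rho(x,\varepsilon)$, so taking $\delta=\varepsilon$ in the lemma shows the $d$-topology is finer than the $\rho$-topology; and since $d\le\sqrt{n}\,\rho$, we have $B_\rho(x,\varepsilon/\sqrt{n})\subset B_d(x,\varepsilon)$, so taking $\delta=\varepsilon/\sqrt{n}$ shows the $\rho$-topology is finer than the $d$-topology. Hence the two topologies coincide.

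It then remains to identify the $\rho$-topology with the product topology $\mathcal{T}_{\mathrm{prod}}$ on $\mathbb{R}^n$, whose standard basis consists of the open boxes $\prod_{i=1}^{n}(a_i,b_i)$. The key observation is that a $\rho$-ball is itself such a box, namely $B_\rho(x,\varepsilon)=\prod_{i=1}^{n}(x_i-\varepsilon,\,x_i+\varepsilon)$; this shows every $\rho$-ball is open in $\mathcal{T}_{\mathrm{prod}}$, hence $\mathcal{T}_{\mathrm{prod}}$ is finer than the $\rho$-topology. Conversely, given a basic box $U=\prod_{i=1}^{n}(a_i,b_i)$ and a point $x\in U$, set $\varepsilon=\min_{1\le i\le n}\min\{x_i-a_i,\;b_i-x_i\}>0$; then $B_\rho(x,\varepsilon)\subset U$, so every box is a union of $\rho$-balls and the $\rho$-topology is finer than $\mathcal{T}_{\mathrm{prod}}$. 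The two therefore agree, and combining with the previous paragraph all three topologies on $\mathbb{R}^n$ coincide.

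The argument has no serious obstacle: the only points requiring a little care are the chain of inequalities relating $d$ and $\rho$ — in particular the upper estimate $d\le\sqrt{n}\,\rho$, where the dimension-dependent constant enters — and the bookkeeping of which basis refines which. One should also recall (or briefly recheck) that $\rho$ is genuinely a metric, though this was already recorded as the square-metric example; everything else is a direct application of the comparison lemma.
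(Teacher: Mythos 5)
Your proposal is correct and follows essentially the same route as the paper: the two inequalities $\rho \le d \le \sqrt{n}\,\rho$ combined with the comparison lemma give equivalence of $d$ and $\rho$, and then $\rho$-balls are compared with basis boxes of the product topology. Your observation that $B_\rho(x,\varepsilon)$ is itself the box $\prod_i (x_i-\varepsilon, x_i+\varepsilon)$ is a slightly cleaner way to phrase the direction the paper handles by picking a box inside the $\rho$-ball, but the argument is the same in substance.
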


\begin{proof}[see \cite{Munkres}]
Let \( x = (x_1, \ldots, x_n) \) and \( y = (y_1, \ldots, y_n) \) be points of \( \mathbb{R}^n \).  
A straightforward computation shows that
\[
\rho(x, y) \leq d(x, y) \leq \sqrt{n}\,\rho(x, y).
\]
From the first inequality, it follows that
\[
B_d(x, \epsilon) \subset B_\rho(x, \epsilon)
\]
for every \( x \) and \( \epsilon > 0 \), since \( d(x, y) < \epsilon \) implies \( \rho(x, y) < \epsilon \).  
Similarly, the second inequality gives
\[
B_\rho(x, \tfrac{\epsilon}{\sqrt{n}}) \subset B_d(x, \epsilon)
\]
for all \( x \) and \( \epsilon > 0 \).  
By the previous lemma, these inclusions imply that both metrics generate the same topology.

Next, we verify that this common topology coincides with the product topology.  
Let
\[
B = (a_1, b_1) \times \cdots \times (a_n, b_n)
\]
be a basic open set of the product topology, and choose \( x = (x_1, \ldots, x_n) \in B \).  
For each coordinate \( i \), select \( \epsilon_i > 0 \) such that
\[
(x_i - \epsilon_i,\, x_i + \epsilon_i) \subset (a_i, b_i),
\]
and let \( \epsilon = \min\{\epsilon_1, \ldots, \epsilon_n\} \).  
Then one can verify that \( B_\rho(x, \epsilon) \subset B \), so the \( \rho \)-topology is finer than the product topology.

Conversely, consider any \( \rho \)-ball \( B_\rho(x, \epsilon) \).  
For any point \( y \in B_\rho(x, \epsilon) \), we can define
\[
B = (x_1 - \epsilon, x_1 + \epsilon) \times \cdots \times (x_n - \epsilon, x_n + \epsilon),
\]
which is a product of open intervals and therefore a basic open set in the product topology.  
Clearly \( y \in B \subset B_\rho(x, \epsilon) \).  
Hence, the two topologies coincide.
\end{proof}

\begin{example}
In \( \mathbb{R}^2 \), consider the Euclidean metric
\[
d(x, y) = \sqrt{(x_1 - y_1)^2 + (x_2 - y_2)^2}
\]
and the square metric
\[
\rho(x, y) = \max\{|x_1 - y_1|, |x_2 - y_2|\}.
\]
The open \( \rho \)-ball centered at \( x \) with radius \( \epsilon \) is a square of side \( 2\epsilon \), while the open \( d \)-ball is a circle of radius \( \epsilon \).  
Both types of sets generate the same open neighborhoods of \( x \), so despite their different shapes, they define the same topology — the usual one on the plane.
\end{example}

\begin{remark}[see \cite{Munkres}]
Consider the space $\mathbb{R}^2$ with three common distance measures: the Euclidean metric, the square (or maximum) metric, and the taxicab (Manhattan) metric. The shapes of the corresponding unit balls are different: circular for the Euclidean metric, square for the maximum metric, and diamond-shaped for the taxicab metric. Despite these geometric differences, all three metrics generate the same topology on $\mathbb{R}^2$. This demonstrates that different metrics can lead to the same notion of "closeness" or open sets, even though distances may be measured differently.
\end{remark}

\begin{example}
In $\mathbb{R}^2$, let $d_2$ denote the Euclidean metric, $d_\infty$ the maximum metric, and $d_1$ the taxicab metric. The unit balls around the origin are:
\[
B_{d_2}(0,1) = \{(x,y) \in \mathbb{R}^2 : x^2 + y^2 < 1\}, \quad
B_{d_\infty}(0,1) = \{(x,y) : \max(|x|,|y|) < 1\}, 
\]
\[
B_{d_1}(0,1) = \{(x,y) : |x| + |y| < 1\}.
\]
Even though their shapes differ, the sets of open neighborhoods generated by each metric define the same topology on $\mathbb{R}^2$.
\end{example}

\begin{theorem}[see \cite{Munkres}]
Let $\tilde{d}(a, b) = \min\{|a - b|, 1\}$ be the bounded metric on $\mathbb{R}$. For points $x = (x_1, x_2, \dots)$ and $y = (y_1, y_2, \dots)$ in the infinite product $\mathbb{R}^\omega$, define
\[
D(x, y) = \sup_{i \in \mathbb{Z}_+} \frac{\tilde{d}(x_i, y_i)}{i}.
\]
Then $D$ is a metric on $\mathbb{R}^\omega$ and the topology it induces coincides with the usual product topology.
\end{theorem}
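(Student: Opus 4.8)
The plan is to handle the two assertions in turn: first that $D$ is a genuine metric on $\mathbb{R}^\omega$, and then that the topology $\mathcal{T}_D$ it induces coincides with the product topology $\mathcal{T}_{\mathrm{prod}}$. For the metric claim I would begin by noting that $D$ is well-defined: since $\tilde{d}(x_i,y_i)\le 1$ for every $i$, each term $\tilde{d}(x_i,y_i)/i$ lies in $[0,1/i]\subseteq[0,1]$, so the supremum exists and in fact $D(x,y)\le 1$ for all $x,y$. Non-negativity and symmetry of $D$ follow termwise from the corresponding properties of $\tilde{d}$, and $D(x,y)=0$ forces $\tilde{d}(x_i,y_i)=0$, hence $x_i=y_i$, for every $i$, so $D(x,y)=0\iff x=y$. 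The only axiom requiring an argument is the triangle inequality: for each fixed $i$, the triangle inequality for $\tilde{d}$ (established in the earlier theorem on the standard bounded metric, applied to the usual metric on $\mathbb{R}$) gives $\tilde{d}(x_i,z_i)/i\le \tilde{d}(x_i,y_i)/i+\tilde{d}(y_i,z_i)/i\le D(x,y)+D(y,z)$; taking the supremum over $i$ on the left yields $D(x,z)\le D(x,y)+D(y,z)$.

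For the topological claim I would prove the two inclusions separately, by the standard basis-comparison technique rather than the earlier lemma, since one of the two topologies here is not presented as a metric topology. To see that $\mathcal{T}_D$ is finer than $\mathcal{T}_{\mathrm{prod}}$: take a basic product-open set $U=\prod_i U_i$ with $U_i=\mathbb{R}$ except for indices $i_1,\dots,i_k$, and a point $x\in U$; choose $\epsilon_j\in(0,1)$ with $(x_{i_j}-\epsilon_j,\,x_{i_j}+\epsilon_j)\subseteq U_{i_j}$ and set $\epsilon=\min_j\{\epsilon_j/i_j\}$. If $D(x,y)<\epsilon$, then for each $j$ we get $\tilde{d}(x_{i_j},y_{i_j})<i_j\epsilon\le\epsilon_j<1$, and since $\tilde{d}(a,b)<1$ implies $\tilde{d}(a,b)=|a-b|$, this forces $|x_{i_j}-y_{i_j}|<\epsilon_j$, so $y\in U$; hence $B_D(x,\epsilon)\subseteq U$.

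For the reverse inclusion, given a ball $B_D(x,\epsilon)$, choose $N$ with $1/N<\epsilon$ and take the product-open set $V=(x_1-\epsilon,x_1+\epsilon)\times\cdots\times(x_{N-1}-\epsilon,x_{N-1}+\epsilon)\times\mathbb{R}\times\mathbb{R}\times\cdots$. For $y\in V$ one has $\tilde{d}(x_i,y_i)/i\le\tilde{d}(x_i,y_i)\le|x_i-y_i|<\epsilon$ for $i<N$, while $\tilde{d}(x_i,y_i)/i\le 1/i\le 1/N<\epsilon$ for $i\ge N$. Since the supremum over $\mathbb{Z}_+$ is the maximum of the finite supremum over $i<N$ and the supremum over $i\ge N$, and each of these is $<\epsilon$, we conclude $D(x,y)<\epsilon$, i.e. $x\in V\subseteq B_D(x,\epsilon)$. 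Combining the two inclusions gives $\mathcal{T}_D=\mathcal{T}_{\mathrm{prod}}$.

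The point that carries the whole argument — rather than the routine axiom checks — is the reverse inclusion, and specifically the role of the weights $1/i$: because $\tilde{d}(x_i,y_i)/i\le 1/i\to 0$, the tail coordinates contribute arbitrarily little to $D$, so controlling only finitely many coordinates already makes $D$ small. This decay is exactly what pins the topology down to the product topology; with the unweighted supremum $\sup_i\tilde{d}(x_i,y_i)$ one would instead obtain the strictly finer uniform topology, so the proof must visibly exploit the $1/i$ factors. The forward inclusion, by contrast, is the more mechanical half, the only subtlety there being to shrink each $\epsilon_j$ below $1$ so that $\tilde{d}$ agrees with the ordinary absolute value on the relevant coordinates.
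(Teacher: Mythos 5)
Your proposal is correct and follows essentially the same route as the paper: the termwise bound $\tilde{d}(x_i,z_i)/i \le D(x,y)+D(y,z)$ followed by a supremum for the triangle inequality, then the two basis-comparison inclusions using $\epsilon=\min_j \epsilon_j/i_j$ (with $\epsilon_j<1$) in one direction and an $N$ with $1/N<\epsilon$ in the other. You are somewhat more explicit than the paper in checking the coordinates $i<N$ and in justifying that $\tilde{d}<1$ forces $\tilde{d}=|x_i-y_i|$, but these are refinements of the same argument, not a different one.
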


\begin{proof}[see \cite{Munkres}]
All metric axioms are straightforward except the triangle inequality. For any $x, y, z \in \mathbb{R}^\omega$ and each $i$:
\[
\frac{\tilde{d}(x_i, z_i)}{i} \le \frac{\tilde{d}(x_i, y_i)}{i} + \frac{\tilde{d}(y_i, z_i)}{i} \le D(x, y) + D(y, z).
\]
Taking the supremum over all $i$ gives
\[
D(x, z) \le D(x, y) + D(y, z),
\]
so the triangle inequality holds.

To see that $D$ generates the product topology, take any open set $U$ in the metric topology and a point $x \in U$. Choose $\epsilon > 0$ so that the $D$-ball $B_D(x, \epsilon) \subset U$. Pick $N$ large enough such that $1/N < \epsilon$, and define the product basis element
\[
V = \prod_{i=1}^N (x_i - \epsilon, x_i + \epsilon) \times \prod_{i > N} \mathbb{R}.
\]
For any $y \in V$,
\[
\frac{\tilde{d}(x_i, y_i)}{i} \le \frac{1}{N} < \epsilon \quad \text{for } i > N,
\]
so $y \in B_D(x, \epsilon)$. Hence, $V \subset B_D(x, \epsilon) \subset U$.

Conversely, consider a product basis element
\[
U = \prod_{i \in \mathbb{Z}_+} U_i
\]
where only finitely many $U_i \neq \mathbb{R}$, say $i = \alpha_1, \dots, \alpha_n$. Let $x \in U$, and choose intervals $(x_i - \epsilon_i, x_i + \epsilon_i) \subset U_i$ with $\epsilon_i \le 1$. Define 
\[
\epsilon = \min \{\epsilon_i/i : i = \alpha_1, \dots, \alpha_n \}.
\]
Then the $D$-ball $B_D(x, \epsilon)$ satisfies $x \in B_D(x, \epsilon) \subset U$, verifying that $D$ indeed induces the product topology.
\end{proof}

Metric spaces provide a versatile framework for capturing the notion of distance. By defining different metrics on the same set, we can either preserve the underlying topology or create new ones, allowing us to model a variety of geometric and analytic scenarios. 

In particular, metrics let us generalize familiar Euclidean concepts like convergence and continuity to much more abstract settings. In the following sections, we will see how these ideas allow us to reason about limits, continuous functions, and other fundamental properties in arbitrary metric spaces.



\newpage
\section{Pseudo-Metric Spaces}

In classical metric spaces, the distance between two distinct points is always positive. However, there are contexts where it is natural to allow different points to have zero distance. This leads to the concept of a \emph{pseudo-metric space}, which generalizes metric spaces by relaxing the condition that zero distance implies equality.

\begin{definition}[Pseudo-Metric, see \cite{KelleyGeneralTopology}]
Let \(X\) be a non-empty set. A function
\[
d: X \times X \to \mathbb{R}_{\ge 0}
\]
is called a \emph{pseudo-metric} if, for all \(x, y, z \in X\):
\begin{enumerate}
    \item \(d(x,x) = 0\),
    \item \(d(x,y) = d(y,x)\) (symmetry),
    \item \(d(x,z) \le d(x,y) + d(y,z)\) (triangle inequality).
\end{enumerate}
\end{definition}

\begin{remark}
In a pseudo-metric, two different points may have zero distance. Hence, every metric is a pseudo-metric, but pseudo-metrics allow more flexibility.
\end{remark}

\begin{definition}[Pseudo-Metric Space]
A set \(X\) equipped with a pseudo-metric \(d\) is called a \emph{pseudo-metric space}, denoted \((X,d)\).
\end{definition}

\begin{example}[Trivial Zero Pseudo-Metric]
For any set \(X\), define \(d(x,y) = 0\) for all \(x,y \in X\). Then \((X,d)\) is a pseudo-metric space. If \(|X|>1\), it is not a metric space because distinct points have zero distance.
\end{example}

\begin{example}[Single-Coordinate Pseudo-Metric]
Consider \(X = \mathbb{R}^3\) and define
\[
d((x_1,x_2,x_3),(y_1,y_2,y_3)) = |x_1 - y_1|.
\]
This defines a pseudo-metric because it satisfies symmetry, non-negativity, and the triangle inequality, yet it ignores the second and third coordinates entirely.
\end{example}

\begin{example}[String Comparison Pseudo-Metric]
Let \(X\) be the set of all finite strings over a fixed alphabet. Define \(d(s_1, s_2) = 0\) if the first character of \(s_1\) equals that of \(s_2\), and \(1\) otherwise. This function is a pseudo-metric: it satisfies symmetry, triangle inequality, and \(d(s,s)=0\), but distinct strings may have zero distance if they start with the same character.
\end{example}

\begin{remark}
Many standard notions from metric spaces extend naturally to pseudo-metrics, such as convergence of sequences and continuity of functions. The main difference is that distinct points can be "indistinguishable" with respect to the pseudo-metric.
\end{remark}

In pseudo-metric spaces, we often need to measure how far a point is from a subset or to analyze collections of points that are "close" in the pseudo-metric sense. Unlike metric spaces, two distinct points may have zero distance, which leads to interesting phenomena and constructions.

\begin{definition}[Distance to a subset]
Let \( (X,d) \) be a pseudo-metric space and \( A \subset X \). For any \( x \in X \), define the \emph{distance from \(x\) to \(A\)} as
\[
D(A,x) = \inf\{ d(x,a) : a \in A \}.
\]
\end{definition}

\begin{example}
Let \(X = \mathbb{R}^2\) with pseudo-metric \(d((x_1,x_2),(y_1,y_2)) = |x_1 - y_1|\) and let \(A = \{(0,y) : y \in \mathbb{R}\}\). Then for any point \((x_1,x_2)\), \(D(A,(x_1,x_2)) = |x_1|\), independent of \(x_2\).
\end{example}

\begin{remark}
In pseudo-metric spaces, a point \( x \) with \( D(A,x) = 0 \) might not belong to \( A \), unlike in strict metric spaces.
\end{remark}

\begin{theorem}[Countability axioms]
Pseudo-metric spaces always satisfy the first axiom of countability. They satisfy the second axiom of countability if and only if they are separable.
\end{theorem}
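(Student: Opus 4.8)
The plan is to reduce everything to the family of open $d$-balls, which forms a basis for the pseudo-metric topology exactly as in the metric case, and then to treat the three implications in turn. For first countability, fix $x \in X$ and take the countable family $\{\, B_d(x, 1/n) : n \in \mathbb{Z}_+ \,\}$. If $U$ is open with $x \in U$, then by definition of the induced topology there is $\varepsilon > 0$ with $B_d(x,\varepsilon) \subset U$, and choosing $n > 1/\varepsilon$ gives $B_d(x,1/n) \subset B_d(x,\varepsilon) \subset U$; hence this family is a neighborhood basis at $x$. This argument uses only that the balls generate the topology, so it applies word for word to pseudo-metrics, proving the first assertion.

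For the equivalence, the forward direction is purely topological: given a countable basis $\{B_n\}$ of the topology, discard the empty members, pick a point $x_n \in B_n$ for each remaining index, and observe that $\{x_n\}$ is dense, since every nonempty open set contains some $B_n$ and hence the chosen $x_n$. No property of $d$ enters here. The converse is where the pseudo-metric structure does the real work. Given a countable dense set $\{x_n\}$, I would set $\mathcal{B} = \{\, B_d(x_n, 1/m) : n, m \in \mathbb{Z}_+ \,\}$, a countable family, and show it is a basis: given an open set $U$ and a point $x \in U$, pick $\varepsilon > 0$ with $B_d(x,\varepsilon) \subset U$ and then $m$ with $1/m < \varepsilon/2$; density supplies some $x_n$ with $d(x,x_n) < 1/m$, so $x \in B_d(x_n,1/m)$, and for any $y \in B_d(x_n,1/m)$ the triangle inequality gives $d(x,y) \le d(x,x_n) + d(x_n,y) < 2/m < \varepsilon$, whence $B_d(x_n,1/m) \subset B_d(x,\varepsilon) \subset U$. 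Thus $\mathcal{B}$ witnesses second countability.

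I expect the converse implication to be the only step with any subtlety: one must choose the radii so that the triangle-inequality estimate closes (the factor-of-two choice of $m$), and it is worth remarking that only symmetry and the triangle inequality are invoked, so the possibility that $d$ vanishes on a pair of distinct points never interferes — indeed the whole theorem is insensitive to that feature of pseudo-metrics, which is exactly why the classical metric-space arguments transfer unchanged. The remaining two implications are routine bookkeeping with the ball basis.
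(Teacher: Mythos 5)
Your proposal is correct: the ball family $\{B_d(x,1/n)\}$ gives first countability, the basis-point selection gives second countable $\Rightarrow$ separable, and the $\varepsilon/2$ triangle-inequality estimate for the balls $B_d(x_n,1/m)$ around a countable dense set correctly gives the converse, with only symmetry and the triangle inequality used, so nothing is disturbed by distinct points at zero distance. Note that the paper states this theorem without any proof (it is quoted from Kelley's treatment of pseudo-metric spaces), so there is no argument to compare against; yours is the standard one and would fill that gap as written.
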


\begin{theorem}[Nets and convergence]
A net \(\{x_\alpha\}\) in a pseudo-metric space \((X,d)\) converges to \( x \in X \) if and only if \( d(x_\alpha, x) \to 0 \).
\end{theorem}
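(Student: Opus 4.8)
The plan is to unwind both sides of the equivalence directly from the definition of net convergence in a topological space, using the fact that the pseudo-metric topology on $(X,d)$ has the open balls $B_d(p,\varepsilon) = \{q \in X : d(p,q) < \varepsilon\}$ as a basis (exactly as for metric spaces). First I would record the two statements being compared. On the one hand, ``$\{x_\alpha\}$ converges to $x$'' means that for every open neighbourhood $U$ of $x$ there is an index $\alpha_0$ in the directed set with $x_\alpha \in U$ for all $\alpha \geq \alpha_0$. On the other hand, ``$d(x_\alpha, x) \to 0$'' means that the net of real numbers $\{d(x_\alpha,x)\}$ converges to $0$ in $\mathbb{R}$ with its usual topology, i.e.\ for every $\varepsilon > 0$ there is an $\alpha_0$ with $d(x_\alpha, x) < \varepsilon$ for all $\alpha \geq \alpha_0$. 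The whole proof is a translation between these two phrasings.

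For the forward implication, let $\varepsilon > 0$ be given. The set $B_d(x,\varepsilon)$ is an open neighbourhood of $x$, since $d(x,x) = 0 < \varepsilon$ shows $x \in B_d(x,\varepsilon)$. By topological convergence there is an $\alpha_0$ with $x_\alpha \in B_d(x,\varepsilon)$, hence $d(x_\alpha,x) < \varepsilon$, for all $\alpha \geq \alpha_0$; as $\varepsilon$ was arbitrary this is precisely $d(x_\alpha,x) \to 0$. For the converse, let $U$ be any open neighbourhood of $x$. Using the ball basis, pick $\varepsilon > 0$ with $B_d(x,\varepsilon) \subset U$; the hypothesis $d(x_\alpha,x) \to 0$ then yields an $\alpha_0$ with $d(x_\alpha,x) < \varepsilon$ for all $\alpha \geq \alpha_0$, so that $x_\alpha \in B_d(x,\varepsilon) \subset U$ for all $\alpha \geq \alpha_0$. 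Hence $\{x_\alpha\}$ converges to $x$, completing the equivalence.

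The one point deserving care — the closest thing to an obstacle — is justifying that the open balls genuinely form a basis for the topology of a pseudo-metric space, since the paper only states this for metrics. The argument is identical to the metric case and uses only the triangle inequality: given $y \in B_d(p,\varepsilon)$, set $r = \varepsilon - d(p,y) > 0$; then for $z \in B_d(y,r)$ we get $d(p,z) \leq d(p,y) + d(y,z) < d(p,y) + r = \varepsilon$, so $B_d(y,r) \subset B_d(p,\varepsilon)$, which shows each ball is open and that an intersection of two balls is a union of balls. Note that the possible failure of the Hausdorff property in a pseudo-metric space (distinct points at distance $0$) plays no role here, because the statement only concerns convergence to one prescribed point $x$, not uniqueness of limits. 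Everything else is the routine bookkeeping of net arguments, with the directed set serving its usual purpose.
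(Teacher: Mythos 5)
Your proof is correct: the forward direction correctly uses that $B_d(x,\varepsilon)$ is a neighbourhood of $x$ because $d(x,x)=0$, the converse correctly reduces an arbitrary neighbourhood to a ball via the basis, and your verification that open balls form a basis for the pseudo-metric topology uses only the triangle inequality, so it survives the passage from metrics to pseudo-metrics; your remark that the failure of the Hausdorff property is irrelevant here is also the right observation. Note that the paper itself states this theorem without any proof (as it does for several results in the pseudo-metric section), so there is no argument of the paper's to compare against — your write-up is exactly the standard translation between topological net convergence and convergence of the real net $\{d(x_\alpha,x)\}$ that the authors presumably had in mind.
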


\begin{definition}[Diameter of a set]
For any \( A \subset X \), define the \emph{diameter} by
\[
\operatorname{diam} A = \sup \{ d(x,y) : x,y \in A \}.
\]
If the supremum does not exist, the diameter is considered infinite.
\end{definition}

\begin{remark}
Finite diameter is not a topological property; it may change under homeomorphisms or pseudo-metric transformations.
\end{remark}

\begin{theorem}[Bounded pseudo-metrics]
Let \(e(x,y) = \min\{1, d(x,y)\}\). Then \( (X,e) \) is a pseudo-metric space with diameter at most 1. The topology induced by \(e\) coincides with the topology induced by \(d\).
\end{theorem}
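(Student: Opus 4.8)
The plan is to follow the same strategy as the proof given earlier for the standard bounded metric $\tilde d$, with the only cosmetic change that we no longer verify axiom (2) (distinct points may now have distance zero). First I would check that $e(x,y)=\min\{1,d(x,y)\}$ is a pseudo-metric. Non-negativity is immediate since $d\ge 0$; the identity $e(x,x)=\min\{1,d(x,x)\}=\min\{1,0\}=0$ follows from the corresponding property of $d$; and symmetry of $e$ is inherited from symmetry of $d$. The only axiom requiring real work is the triangle inequality $e(x,z)\le e(x,y)+e(y,z)$, which I would handle by the same case split used before: if either $d(x,y)\ge 1$ or $d(y,z)\ge 1$, then the right-hand side is at least $1$, while $e(x,z)\le 1$ by definition, so the inequality is automatic; otherwise $d(x,y)<1$ and $d(y,z)<1$, so $e(x,y)=d(x,y)$ and $e(y,z)=d(y,z)$, and the triangle inequality for $d$ together with $e(x,z)\le d(x,z)$ yields $e(x,z)\le d(x,z)\le d(x,y)+d(y,z)=e(x,y)+e(y,z)$.

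For the diameter bound I would simply observe that $e(x,y)\le 1$ for every pair $x,y\in X$, so $\operatorname{diam} X=\sup\{e(x,y):x,y\in X\}\le 1$, and a fortiori every subset has diameter at most $1$.

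For the coincidence of topologies, the key observation is that for any $x\in X$ and any $\varepsilon$ with $0<\varepsilon\le 1$ one has $B_e(x,\varepsilon)=B_d(x,\varepsilon)$, since truncation at $1$ cannot affect whether a distance is below a threshold $\le 1$; and for $\varepsilon>1$ one has $B_e(x,\varepsilon)=X$. From this I would deduce that every $e$-ball is $d$-open and every $d$-ball is $e$-open: a $d$-ball $B_d(x,\varepsilon)$ with $\varepsilon>1$ is a union of balls of radius less than $1$ centred at its points, each of which coincides with an $e$-ball. Hence the two topologies have a common basis and are equal. Alternatively, I could phrase this through the comparison lemma stated earlier, using the inclusions $B_e(x,\varepsilon)\subset B_d(x,\varepsilon)$ and $B_d(x,\min\{\varepsilon,1\})\subset B_e(x,\varepsilon)$ to conclude that each topology refines the other.

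I do not expect a genuine obstacle; the statement is routine. The only point deserving a word of care is that the comparison lemma invoked above was stated for metrics, not pseudo-metrics, so I would either note that its proof uses nothing beyond the basis of open balls and therefore transfers verbatim, or avoid the lemma altogether and argue directly from the identity $B_e(x,\varepsilon)=B_d(x,\varepsilon)$ for $\varepsilon\le 1$.
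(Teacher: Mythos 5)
Your proposal is correct and follows essentially the same route the paper itself relies on: the case-split verification of the triangle inequality and the observation that balls of radius at most $1$ under $e$ and $d$ coincide are exactly the argument the paper gives for the standard bounded metric $\tilde{d}$ in the metric-space chapter, and they transfer to the pseudo-metric setting verbatim since axiom (2) is never used. Your closing remark about the comparison lemma being stated only for metrics is a sensible precaution, and your direct argument via the identity $B_e(x,\varepsilon)=B_d(x,\varepsilon)$ for $\varepsilon\le 1$ disposes of it cleanly.
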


\begin{remark}
This allows us to assume pseudo-metric spaces have diameter at most one without loss of generality when studying topological properties.
\end{remark}

\begin{theorem}[Countable products]
Let \(\{(X_n,d_n)\}\) be a sequence of pseudo-metric spaces with diameters \(\le 1\). Define a pseudo-metric on the product \(\prod_n X_n\) by
\[
d(x,y) = \sum_{n=0}^{\infty} 2^{-n} d_n(x_n,y_n).
\]
Then \(d\) defines a pseudo-metric whose induced topology agrees with the product topology.
\end{theorem}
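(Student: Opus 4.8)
The plan is to handle the two assertions in turn: first that $d$ is a well-defined pseudo-metric, and then that the pseudo-metric topology it induces is exactly the product topology on $\prod_{n} X_n$. The first part is routine. The series $\sum_{n=0}^{\infty} 2^{-n} d_n(x_n,y_n)$ converges absolutely, since the diameter bound gives $0 \le 2^{-n} d_n(x_n,y_n) \le 2^{-n}$ and $\sum_{n=0}^{\infty} 2^{-n} = 2$, so $d$ takes finite non-negative values. The identity $d(x,x)=0$ is immediate because every summand vanishes, and symmetry follows termwise from the symmetry of each $d_n$. For the triangle inequality I would apply $d_n(x_n,z_n) \le d_n(x_n,y_n)+d_n(y_n,z_n)$ in each coordinate, multiply by $2^{-n}$, and sum; since all terms are non-negative the partial sums are bounded and the inequality passes to the limit, yielding $d(x,z) \le d(x,y)+d(y,z)$.

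For the topological statement I would prove the two inclusions separately, using the earlier lemma that compares topologies through inclusions of balls. For the inclusion of the product topology into the $d$-topology, let $U=\prod_n U_n$ be a basic open set with $U_n = X_n$ for every $n$ outside a finite set $F$, and fix $x \in U$. For each $n \in F$ pick $\epsilon_n>0$ with $B_{d_n}(x_n,\epsilon_n)\subset U_n$ and set $\delta=\min_{n\in F} 2^{-n}\epsilon_n$ (the case $F=\varnothing$ being trivial, as $U$ is then the whole space). If $d(x,y)<\delta$, then for every $n\in F$ we have $2^{-n}d_n(x_n,y_n)\le d(x,y)<2^{-n}\epsilon_n$, hence $y_n\in U_n$, while for $n\notin F$ membership is automatic; thus $B_d(x,\delta)\subset U$, so $U$ is $d$-open.

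For the reverse inclusion, fix $x$ and $\epsilon>0$. The key step is to truncate: choose $N$ with $2^{-N}<\epsilon/2$, so that for every $y$ the tail satisfies $\sum_{n>N} 2^{-n}d_n(x_n,y_n)\le \sum_{n>N}2^{-n}=2^{-N}<\epsilon/2$. Then set $\delta=\epsilon/4$ and consider the basic open set $V=\bigl(\prod_{n\le N}B_{d_n}(x_n,\delta)\bigr)\times\bigl(\prod_{n>N}X_n\bigr)$, which contains $x$. For $y\in V$ the head of the series is bounded by $\sum_{n=0}^{N}2^{-n}\delta<2\delta=\epsilon/2$, so $d(x,y)<\epsilon$, i.e.\ $V\subset B_d(x,\epsilon)$. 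The two inclusions together show that the two topologies coincide.

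I expect the only genuine subtlety to be this truncation argument: one must exploit the uniform bound $d_n\le 1$ together with the geometric decay of the weights $2^{-n}$ to discard all but finitely many coordinates, mirroring the $\mathbb{R}^\omega$ computation given earlier in the paper. A small point worth stating explicitly is that, since $d$ is only assumed to be a pseudo-metric, "open ball" and "$d$-open" must be understood in the pseudo-metric sense, but none of the arguments above are affected by the possible failure of $d(x,y)=0\Rightarrow x=y$.
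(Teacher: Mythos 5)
Your proof is correct and complete; note that the paper itself states this theorem without proof, so there is no in-text argument to compare against. Both halves of your argument check out: the pseudo-metric verification is routine as you say, the choice $\delta=\min_{n\in F}2^{-n}\epsilon_n$ does give $B_d(x,\delta)\subset\prod_n U_n$ because each weighted summand is dominated by the whole sum, and in the reverse direction the tail estimate $\sum_{n>N}2^{-n}d_n(x_n,y_n)\le 2^{-N}<\epsilon/2$ together with the head bound $\sum_{n=0}^{N}2^{-n}\delta<2\delta=\epsilon/2$ is exactly the truncation needed, with the uniform bound $d_n\le 1$ used only where it must be (in the tail). Your argument is in effect the weighted-sum analogue of the proof the paper does give for the metric $D(x,y)=\sup_i \tilde{d}(x_i,y_i)/i$ on $\mathbb{R}^\omega$: there the tail is controlled because the weights $1/i$ decay, here because the weights $2^{-n}$ are summable, and the sum version requires the extra geometric-series step you supplied for the finitely many leading coordinates. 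Your closing remark is also the right one to make explicit --- openness of balls and the neighborhood characterization of open sets use only $d(x,x)=0$, symmetry, and the triangle inequality, so nothing in the argument is harmed by the failure of separation in a pseudo-metric.
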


\begin{definition}[Isometry]
Let \((X,d)\) and \((Y,e)\) be pseudo-metric spaces. A map \( f: X \to Y \) is an \emph{isometry} if
\[
d(x,y) = e(f(x),f(y)) \quad \text{for all } x,y \in X.
\]
\end{definition}

\begin{remark}
Unlike in metric spaces, isometries in pseudo-metric spaces need not be injective: points at zero distance may collapse. Topological and metric invariants are preserved under isometries.
\end{remark}

\begin{definition}[Metric quotient]
For a pseudo-metric space \((X,d)\), define an equivalence relation
\[
x \sim y \iff d(x,y) = 0.
\]
Let \(\mathfrak{D}\) denote the set of equivalence classes. Define a distance \(D\) on \(\mathfrak{D}\) by
\[
D([x],[y]) = \inf \{ d(a,b) : a \in [x], b \in [y] \}.
\]
Then \((\mathfrak{D},D)\) is a metric space, called the \emph{metric quotient} of \((X,d)\).
\end{definition}

\begin{remark}
Every pseudo-metric space is “almost” a metric space: the metric quotient identifies points at zero distance, yielding a genuine metric space suitable for standard metric-space analysis.
\end{remark}

In summary, pseudo-metric spaces generalize the familiar concept of metric spaces by allowing distinct points to have zero distance, while still retaining many useful properties such as continuity of the distance function, normality, and first-countability. Key constructions, including bounded pseudo-metrics, countable products, and metric quotients, enable us to study pseudo-metric spaces using the same topological tools as for metric spaces. The notion of isometries and metric quotients illustrates that every pseudo-metric space can be “converted” into a genuine metric space, revealing a deep connection between these two classes of spaces. Understanding these foundational aspects sets the stage for exploring further topological and functional-analytic structures built upon pseudo-metrics.


\newpage
\section{Continuity and Convergence}

The concepts of convergence and continuity lie at the heart of analysis and topology. They describe how mathematical objects behave under limiting processes and transformations.  
Convergence captures the idea that a sequence of points can approach a specific value, while continuity expresses the idea that small changes in the input of a function produce only small changes in its output.  
Together, these notions allow us to understand stability and approximation — ideas that appear throughout mathematics, from calculus and differential equations to functional analysis and topology.

In this chapter, we begin by formalizing the notion of convergence in a general metric space, extending the familiar definition from the real numbers. We then introduce the concept of continuity for functions between metric spaces and explore how it can be described both in terms of the $\varepsilon$–$\delta$ definition and through neighborhoods or sequences.  
Finally, we study key properties of continuous functions, including the behavior of constant and identity maps, equivalence under different metrics, and the preservation of continuity under composition. These results build the foundation for the study of topological structures that follows in later chapters.

\begin{definition}[Convergence of sequences, see \cite{Schaum}]
Let $(X, d)$ be a metric space. A sequence $\langle a_1, a_2, \dots \rangle$ of elements in $X$ is said to \textbf{converge} to a point $b \in X$ if, for every $\epsilon > 0$, there exists a positive integer $n_0$ such that
\[
n > n_0 \implies d(a_n, b) < \epsilon.
\]
In this case, we write $\lim_{n \to \infty} a_n = b$.
\end{definition}

\begin{remark}
Convergence of sequences can be viewed as a particular instance of convergence of \emph{nets} in topology. Every sequence is a net whose index set is the set of natural numbers $\mathbb{N}$.
\end{remark}

\begin{example}[see \cite{Schaum}]
In $(\mathbb{R}, |\cdot|)$, the sequence $a_n = \frac{1}{n}$ converges to $0$, since for every $\epsilon > 0$, choosing $n_0 > 1/\epsilon$ ensures that $|a_n - 0| < \epsilon$ whenever $n > n_0$.
\end{example}

\begin{definition}[Continuity on $\mathbb{R}$, see \cite{Mendelson}]
Let $f: \mathbb{R} \to \mathbb{R}$. The function $f$ is said to be \textbf{continuous at a point} $a \in \mathbb{R}$ if for every $\epsilon > 0$ there exists a $\delta > 0$ such that
\[
|x - a| < \delta \implies |f(x) - f(a)| < \epsilon.
\]
We say that $f$ is \textbf{continuous} on $\mathbb{R}$ if it is continuous at every $a \in \mathbb{R}$.
\end{definition}

\begin{example}[see \cite{Mendelson}]
The identity function $f(x) = x$ is continuous at all points $a \in \mathbb{R}$, since for any $\epsilon > 0$, choosing $\delta = \epsilon$ gives $|x - a| < \delta \implies |f(x) - f(a)| = |x - a| < \epsilon$.
\end{example}

\begin{definition}[Continuity in metric spaces, see \cite{Mendelson}]
Let $(X, d)$ and $(Y, d')$ be metric spaces. A function $f: X \to Y$ is \textbf{continuous at a point} $a \in X$ if, for every $\epsilon > 0$, there exists $\delta > 0$ such that
\[
d(x, a) < \delta \implies d'(f(x), f(a)) < \epsilon.
\]
The function $f$ is \textbf{continuous} on $X$ if it satisfies this condition at each $a \in X$.
\end{definition}

\begin{remark}
An equivalent characterization is that $f: X \to Y$ is continuous if and only if the preimage of every open set in $Y$ is open in $X$. This equivalence links the metric definition of continuity with the general topological notion.
\end{remark}

\begin{example}[Linear functions, see \cite{Kasriel}]
Let $d_n$ and $d$ denote the Euclidean metrics on $\mathbb{R}^n$ and $\mathbb{R}$, respectively. Fix $a \in \mathbb{R}^n$ and define
\[
f(x) = a \cdot x.
\]
Then $f$ is continuous from $(\mathbb{R}^n, d_n)$ into $(\mathbb{R}, d)$.

\emph{Proof sketch:}  
For $x_0 \in \mathbb{R}^n$ and $\varepsilon > 0$, if $|a| = 0$, the statement is trivial. Otherwise, let $\delta = \varepsilon / |a|$. Then
\[
d_n(x, x_0) < \delta \implies |f(x) - f(x_0)| = |a \cdot (x - x_0)| \le |a| \, \delta = \varepsilon.
\]
Hence $f$ is continuous.
\end{example}

\begin{example}[Quadratic function, see \cite{Mendelson}]
Let $f: \mathbb{R} \to \mathbb{R}$ be defined by $f(x) = x^2$.  
For $a \in \mathbb{R}$ and $\epsilon > 0$, choose $\delta = \min\{1, \epsilon / (2|a| + 1)\}$.  
Then
\[
|x - a| < \delta \implies |x^2 - a^2| = |x - a||x + a| < \epsilon,
\]
showing that $f$ is continuous at $a$.
\end{example}

\begin{theorem}[Constant functions are continuous, see \cite{Mendelson}]
Let $(X, d)$ and $(Y, d')$ be metric spaces. If $f: X \to Y$ is a constant function, then $f$ is continuous on $X$.
\end{theorem}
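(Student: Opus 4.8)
The plan is to verify the $\varepsilon$–$\delta$ definition of continuity (Definition of continuity in metric spaces, above) directly at an arbitrary point. First I would write $f(x) = c$ for all $x \in X$, where $c \in Y$ is the fixed value taken by the constant map. Then I would fix an arbitrary point $a \in X$ and an arbitrary $\varepsilon > 0$; the task is to exhibit some $\delta > 0$ for which $d(x,a) < \delta$ implies $d'(f(x), f(a)) < \varepsilon$.

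The key observation is that $f(x) = f(a) = c$ for every $x \in X$, so $d'(f(x), f(a)) = d'(c,c) = 0$ by the second metric axiom. Since $\varepsilon > 0$, the inequality $0 < \varepsilon$ holds with no hypothesis on $x$ at all, so the implication $d(x,a) < \delta \implies d'(f(x),f(a)) < \varepsilon$ is satisfied for \emph{any} positive $\delta$; for definiteness one may simply take $\delta = 1$. This shows $f$ is continuous at $a$, and as $a$ was arbitrary, $f$ is continuous on $X$.

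There is essentially no obstacle here, since the statement follows immediately from the definitions; the only point needing a moment's care is the quantifier structure — $\delta$ is allowed to depend on $a$ and $\varepsilon$, and in this case it need not depend on either — together with the trivial fact $d'(c,c) = 0$. I would also note the even shorter topological argument afforded by the earlier remark that $f$ is continuous iff the preimage of every open set is open: for a constant map, $f^{-1}(U)$ is either $\emptyset$ or $X$ for each open $U \subseteq Y$, and both are open.
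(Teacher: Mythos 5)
Your proof is correct and follows essentially the same argument as the paper: fix $a \in X$ and $\varepsilon > 0$, observe $d'(f(x), f(a)) = d'(c, c) = 0 < \varepsilon$ for all $x$, and take any $\delta > 0$ (e.g.\ $\delta = 1$). The extra remark about the open-set preimage criterion is a nice bonus but not needed.
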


\begin{proof}
Fix any $a \in X$ and let $\epsilon > 0$.  
Since $f$ is constant, there exists some $c \in Y$ such that $f(x) = c$ for all $x \in X$.  
Thus, for every $x \in X$, we have
\[
d'(f(x), f(a)) = d'(c, c) = 0 < \epsilon.
\]
This inequality holds for any $\delta > 0$, so we may take, for instance, $\delta = 1$.  
Hence $f$ is continuous at $a$, and therefore continuous on $X$.
\end{proof}

\begin{example}
Consider $f: \mathbb{R} \to \mathbb{R}$ defined by $f(x) = 7$.  
No matter how $x$ varies, $|f(x) - f(a)| = 0 < \epsilon$ for all $\epsilon > 0$, showing that $f$ is continuous everywhere.
\end{example}

\begin{theorem}[The identity map is continuous, see \cite{Mendelson}]
Let $(X, d)$ be a metric space. The identity function $i: X \to X$, defined by $i(x) = x$, is continuous.
\end{theorem}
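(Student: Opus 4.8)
The plan is to verify the $\varepsilon$--$\delta$ condition from the definition of continuity in metric spaces directly, since the identity map is essentially the simplest possible example and no topological machinery is needed. First I would fix an arbitrary point $a \in X$ and let $\varepsilon > 0$ be given; the goal is then to produce a $\delta > 0$ such that $d(x, a) < \delta$ forces $d(i(x), i(a)) < \varepsilon$.

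The key observation is that $i(x) = x$ and $i(a) = a$, so $d(i(x), i(a)) = d(x, a)$ identically. Hence the implication we must establish collapses to: $d(x,a) < \delta \implies d(x,a) < \varepsilon$, which obviously holds if we simply take $\delta = \varepsilon$. I would record this choice explicitly and note that the chain $d(i(x), i(a)) = d(x,a) < \delta = \varepsilon$ completes the verification at the point $a$.

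Since $a \in X$ was arbitrary, continuity at every point of $X$ follows, and therefore $i$ is continuous on $X$ by the definition of a (globally) continuous function. There is essentially no obstacle here: the only thing to be careful about is invoking the correct definition (continuity in metric spaces, with the metric $d$ used on both the domain and codomain since $i$ maps $X$ to itself) and making the trivial but necessary remark that $\delta = \varepsilon$ works uniformly, so the same argument also shows $i$ is uniformly continuous, though that stronger statement is not required here.
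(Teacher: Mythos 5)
Your proposal is correct and follows exactly the paper's argument: fix $a \in X$ and $\varepsilon > 0$, take $\delta = \varepsilon$, and use $d(i(x), i(a)) = d(x,a)$ to conclude continuity at every point. The extra remark about uniform continuity is a harmless bonus not needed for the statement.
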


\begin{proof}
Let $a \in X$ and $\epsilon > 0$ be given.  
Choose $\delta = \epsilon$.  
Then for any $x \in X$ satisfying $d(x, a) < \delta$, we have
\[
d(i(x), i(a)) = d(x, a) < \epsilon.
\]
Thus $i$ is continuous at $a$, and consequently continuous on $X$.
\end{proof}

\begin{theorem}[Identity map between equivalent metrics on $\mathbb{R}^n$, see \cite{Mendelson}]
Let $i: \mathbb{R}^n \to \mathbb{R}^n$ be the identity function.  
Denote by $d$ the maximum (supremum) metric and by $d'$ the Euclidean metric on $\mathbb{R}^n$.  
Then both mappings
\[
i: (\mathbb{R}^n, d) \to (\mathbb{R}^n, d') 
\quad \text{and} \quad 
i: (\mathbb{R}^n, d') \to (\mathbb{R}^n, d)
\]
are continuous.
\end{theorem}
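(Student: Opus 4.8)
The plan is to reduce everything to the comparison inequality already established for the Euclidean metric and the square (maximum) metric on $\mathbb{R}^n$. In the notation of the present statement, where $d$ is the maximum metric and $d'$ the Euclidean metric, that earlier theorem gives, for all $x, y \in \mathbb{R}^n$,
\[
d(x, y) \;\le\; d'(x, y) \;\le\; \sqrt{n}\, d(x, y).
\]
With this two-sided estimate in hand, each direction of continuity becomes a one-line $\varepsilon$–$\delta$ argument, exactly in the style of the preceding proofs in this section.

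First I would treat $i \colon (\mathbb{R}^n, d) \to (\mathbb{R}^n, d')$. Fix $a \in \mathbb{R}^n$ and $\varepsilon > 0$, and set $\delta = \varepsilon / \sqrt{n}$. Then for any $x$ with $d(x, a) < \delta$ the right-hand inequality yields
\[
d'(i(x), i(a)) = d'(x, a) \le \sqrt{n}\, d(x, a) < \sqrt{n}\, \delta = \varepsilon,
\]
so $i$ is continuous at $a$, and hence on all of $\mathbb{R}^n$. Next I would treat the reverse map $i \colon (\mathbb{R}^n, d') \to (\mathbb{R}^n, d)$. Fix $a$ and $\varepsilon > 0$, and this time simply take $\delta = \varepsilon$. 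Then $d'(x, a) < \delta$ forces, by the left-hand inequality,
\[
d(i(x), i(a)) = d(x, a) \le d'(x, a) < \varepsilon,
\]
so this map is continuous as well.

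I do not expect a genuine obstacle here: the only point requiring care is keeping track of which side of the comparison inequality is needed, and therefore choosing the matching $\delta$ — namely $\varepsilon/\sqrt{n}$ when passing from the max metric to the Euclidean one, and $\varepsilon$ in the other direction. As an alternative, one could instead invoke the earlier theorem that $d$ and $d'$ induce the same topology on $\mathbb{R}^n$, together with the remark that a map between metric spaces is continuous precisely when preimages of open sets are open; since the identity map pulls each set back to itself and the two topologies coincide, both displayed maps are continuous (indeed the identity is a homeomorphism). I would nonetheless present the $\varepsilon$–$\delta$ version as the main argument, since it is more elementary and self-contained and fits the level of the surrounding exposition.
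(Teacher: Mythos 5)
Your proposal is correct and follows essentially the same route as the paper: the same choices $\delta = \varepsilon/\sqrt{n}$ for $i:(\mathbb{R}^n,d)\to(\mathbb{R}^n,d')$ and $\delta = \varepsilon$ for the reverse direction, resting on the inequalities $d(x,y)\le d'(x,y)\le \sqrt{n}\,d(x,y)$. The only cosmetic difference is that the paper re-derives these bounds coordinatewise inside the proof, while you cite the comparison stated earlier for the Euclidean and square metrics; either way the argument is the same.
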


\begin{proof}
Let $a = (a_1, \dots, a_n) \in \mathbb{R}^n$ and $\epsilon > 0$.  

\textbf{(1)} For $i: (\mathbb{R}^n, d) \to (\mathbb{R}^n, d')$:  
If $d(x, a) < \delta$, then each $|x_k - a_k| < \delta$ for $k = 1, \dots, n$.  
Hence
\[
d'(x, a) = \sqrt{\sum_{k=1}^n (x_k - a_k)^2} 
< \sqrt{n} \, \delta.
\]
By choosing $\delta = \epsilon / \sqrt{n}$, we obtain $d'(x, a) < \epsilon$.

\textbf{(2)} For $i: (\mathbb{R}^n, d') \to (\mathbb{R}^n, d)$:  
If $d'(x, a) < \delta$, then in particular $|x_k - a_k| \le d'(x, a) < \delta$ for all $k$.  
Thus
\[
d(x, a) = \max_k |x_k - a_k| < \delta.
\]
Taking $\delta = \epsilon$ ensures $d(x, a) < \epsilon$, proving continuity.

Therefore, $i$ is continuous for both pairs of metrics.
\end{proof}

\begin{example}
For $n = 2$, let $d(x, y) = \max\{|x_1 - y_1|, |x_2 - y_2|\}$ and 
$d'(x, y) = \sqrt{(x_1 - y_1)^2 + (x_2 - y_2)^2}$.  
Then the identity function $i(x_1, x_2) = (x_1, x_2)$ satisfies  
$d'(x, a) \le \sqrt{2} \, d(x, a)$ and $d(x, a) \le d'(x, a)$, confirming that $i$ is continuous both ways.
\end{example}

\begin{theorem}[Composition of continuous mappings, see \cite{Mendelson}]
Let $(X, d)$, $(Y, d')$, and $(Z, d'')$ be metric spaces.  
Suppose that $f: X \to Y$ is continuous at a point $a \in X$, and that $g: Y \to Z$ is continuous at the point $f(a) \in Y$.  
Then the composition $g \circ f: X \to Z$ is continuous at $a$.
\end{theorem}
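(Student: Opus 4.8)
The plan is to proceed directly from the $\varepsilon$--$\delta$ definition of continuity at a point, chaining the two hypotheses together in the correct order. Fix an arbitrary $\varepsilon > 0$; the goal is to produce a $\delta > 0$ such that $d(x, a) < \delta$ forces $d''\big((g \circ f)(x), (g \circ f)(a)\big) < \varepsilon$.

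First I would apply the continuity of $g$ at the point $f(a)$. Since $\varepsilon > 0$, this yields some intermediate tolerance $\eta > 0$ with the property that $d'(y, f(a)) < \eta$ implies $d''\big(g(y), g(f(a))\big) < \varepsilon$ for all $y \in Y$. Next I would feed this $\eta$ into the continuity of $f$ at $a$: treating $\eta$ now as the target accuracy, continuity of $f$ gives a $\delta > 0$ such that $d(x, a) < \delta$ implies $d'\big(f(x), f(a)\big) < \eta$. Finally I would stitch the two implications together: if $d(x, a) < \delta$, then $d'\big(f(x), f(a)\big) < \eta$, and hence, applying the first implication with $y = f(x)$, we get $d''\big(g(f(x)), g(f(a))\big) < \varepsilon$. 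Recognizing that $g(f(x)) = (g \circ f)(x)$ and $g(f(a)) = (g \circ f)(a)$ completes the argument, and since $a$ was the given point, $g \circ f$ is continuous at $a$.

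The only point requiring care --- and what I would flag as the "main obstacle," though it is really just a matter of bookkeeping rather than a genuine difficulty --- is the order in which the two continuity hypotheses are invoked. One must start from the outer function $g$ to extract the intermediate radius $\eta$, and only then apply continuity of $f$ with $\eta$ playing the role of its $\varepsilon$. Invoking them in the wrong order leaves the quantifiers mismatched. No estimate beyond a single substitution is needed, and the triangle inequality is not even used here; the result is purely a composition of logical implications furnished by the definitions.
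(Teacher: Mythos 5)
Your argument is correct and coincides with the paper's own proof: both extract the intermediate tolerance $\eta$ from the continuity of $g$ at $f(a)$, then feed it as the target accuracy into the continuity of $f$ at $a$, and chain the two implications. Nothing further is needed.
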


\begin{proof}
Fix $\epsilon > 0$.  
Since $g$ is continuous at $f(a)$, there exists $\eta > 0$ such that
\[
d'(y, f(a)) < \eta \implies d''(g(y), g(f(a))) < \epsilon.
\]
Because $f$ is continuous at $a$, there exists $\delta > 0$ for which
\[
d(x, a) < \delta \implies d'(f(x), f(a)) < \eta.
\]
Combining these two implications, whenever $d(x, a) < \delta$, we have
\[
d''(g(f(x)), g(f(a))) < \epsilon.
\]
Hence $g \circ f$ is continuous at $a$.
\end{proof}

\begin{corollary}[see \cite{Mendelson}]
If $f: X \to Y$ and $g: Y \to Z$ are continuous on their entire domains, then the composite function $g \circ f: X \to Z$ is also continuous on $X$.
\end{corollary}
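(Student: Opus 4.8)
The plan is to deduce this directly from the preceding Theorem on the composition of continuous mappings, applied pointwise. The corollary is essentially a packaging of that theorem: "continuous on the domain" is by definition "continuous at every point of the domain," so the work reduces to quoting the local statement at each point.

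First I would fix an arbitrary point $a \in X$. Since $f$ is continuous on all of $X$, in particular $f$ is continuous at $a$. Next I would observe that $f(a)$ is a genuine point of $Y$, and since $g$ is continuous on all of $Y$, $g$ is continuous at $f(a)$. These are exactly the two hypotheses required by the Theorem on composition, so I would invoke it to conclude that $g \circ f$ is continuous at the point $a$.

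Finally, since $a \in X$ was arbitrary, $g \circ f$ is continuous at every point of $X$, which by definition means $g \circ f$ is continuous on $X$. This completes the argument.

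There is no real obstacle here: the only thing to be careful about is the bookkeeping, namely making sure that the point at which $g$ must be continuous is $f(a)$ (not $a$ itself), and that this point automatically lies in the domain $Y$ on which $g$ is assumed continuous. Once that is noted, the corollary follows immediately from the theorem with no further computation.
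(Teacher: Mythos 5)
Your argument is correct and is exactly how the paper intends the corollary to follow: the preceding composition theorem is applied pointwise at each $a \in X$, using continuity of $f$ at $a$ and of $g$ at $f(a)$. No gaps — your care about the point $f(a)$ being where $g$'s continuity is invoked is the only subtlety, and you handled it.
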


\begin{example}
Let $f: \mathbb{R} \to \mathbb{R}$ be defined by $f(x) = x^2$ and $g: \mathbb{R} \to \mathbb{R}$ by $g(y) = \sin(y)$.  
Both functions are continuous on $\mathbb{R}$.  
Their composition is
\[
(g \circ f)(x) = \sin(x^2),
\]
which is continuous on $\mathbb{R}$ as a result of the theorem above.
\end{example}

\begin{remark}
This result can be viewed as a generalization of the familiar fact from real analysis that the composition of continuous real functions is continuous.  
The argument relies only on the $\varepsilon$–$\delta$ definition, so it applies to any metric (or even pseudometric) spaces.
\end{remark}

\begin{remark}
An important consequence is that complicated continuous maps can be built step by step from simpler ones — such as linear, polynomial, or trigonometric functions — while preserving continuity throughout the construction.
\end{remark}

In this chapter, we generalized the familiar ideas of convergence and continuity from the real line to arbitrary metric spaces. We saw that convergence of sequences provides a natural way to describe continuity: a function is continuous if and only if it preserves limits.  
We also observed that simple functions — such as constant and identity mappings — are continuous in any metric space, and that the composition of continuous functions remains continuous.  

These properties highlight the robustness of the concept of continuity: it behaves predictably and harmoniously under basic operations, forming a bridge between analysis and topology.  
In the chapters that follow, this foundation will allow us to reinterpret continuity, openness, and convergence purely in topological terms, without explicit reference to a metric.

\newpage
\section{Open Balls and Neighborhoods}

When studying the behavior of functions between metric spaces, one of the central ideas is continuity. 
Intuitively, a function $f : X \to Y$ is continuous at a point $a \in X$ if small changes in $x$ near $a$ lead to small changes in $f(x)$ near $f(a)$. 
To express this idea rigorously, we need a precise way to describe what it means for points in $X$ to be "close" to one another.

In a metric space $(X, d)$, the notion of distance is already given by the metric $d$. 
Thus, for any point $a \in X$, it is natural to consider all points $x$ whose distance from $a$ is less than some positive number $\delta$. 
This collection of points represents the region around $a$ that lies within distance $\delta$, and it plays a fundamental role in the definitions of continuity, limits, and openness.

We formalize this idea as follows:

\begin{definition}[see \cite{Mendelson, negrescu}]
Let $(X, d)$ be a metric space.  
For a point $a \in X$ and a real number $\delta > 0$, we define the \textbf{open ball} centered at $a$ with radius $\delta$ as the set
\[
B(a; \delta) = \{\, x \in X \mid d(a, x) < \delta \,\}.
\]
In other words, $B(a; \delta)$ contains all points of $X$ whose distance from $a$ is strictly less than $\delta$.
\end{definition}

\begin{remark}
Geometrically, the open ball \(B(a;\delta)\) represents all points of \(X\) lying strictly within distance \(\delta\) from \(a\). Its shape depends on the chosen metric.
\end{remark}

Thus, \(x \in B(a;\delta)\) if and only if \(d(x,a) < \delta\). Similarly, if \((Y, d')\) is another metric space and \(f: X \to Y\), then \(y \in B(f(a);\epsilon)\) if and only if \(d'(y,f(a)) < \epsilon\).

\begin{theorem}[see \cite{Mendelson}]
A function \(f:(X, d) \to (Y, d')\) is continuous at a point \(a \in X\) if and only if for every \(\epsilon > 0\) there exists \(\delta > 0\) such that
\[
f\big(B(a;\delta)\big) \subseteq B\big(f(a);\epsilon\big).
\]
\end{theorem}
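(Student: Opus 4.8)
The plan is to prove this by directly unwinding the definitions; the equivalence is essentially a dictionary translation between the $\varepsilon$–$\delta$ formulation of continuity at a point and the language of open balls. First I would fix $a \in X$ and recall the two relevant definitions already established: continuity of $f$ at $a$ means that for every $\varepsilon > 0$ there is a $\delta > 0$ with $d(x,a) < \delta \implies d'(f(x),f(a)) < \varepsilon$, and the open ball is $B(a;\delta) = \{x \in X \mid d(a,x) < \delta\}$. The key observation to record is the pair of membership criteria: $x \in B(a;\delta)$ if and only if $d(a,x) < \delta$, and $f(x) \in B(f(a);\varepsilon)$ if and only if $d'(f(x),f(a)) < \varepsilon$.

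For the forward direction, I would assume $f$ is continuous at $a$ and fix $\varepsilon > 0$. Continuity supplies a $\delta > 0$ as above; I then take an arbitrary $y \in f(B(a;\delta))$, write $y = f(x)$ with $x \in B(a;\delta)$, note $d(a,x) < \delta$, apply the continuity implication to get $d'(f(x),f(a)) < \varepsilon$, and conclude $y = f(x) \in B(f(a);\varepsilon)$. Since $y$ was arbitrary, $f(B(a;\delta)) \subseteq B(f(a);\varepsilon)$. For the converse, I would assume the ball-containment condition and fix $\varepsilon > 0$; the hypothesis gives a $\delta > 0$ with $f(B(a;\delta)) \subseteq B(f(a);\varepsilon)$. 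Then for any $x$ with $d(x,a) < \delta$ we have $x \in B(a;\delta)$, hence $f(x) \in f(B(a;\delta)) \subseteq B(f(a);\varepsilon)$, which means $d'(f(x),f(a)) < \varepsilon$. This is exactly the $\varepsilon$–$\delta$ condition for continuity at $a$.

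There is no real obstacle here: the entire content is bookkeeping with the logical structure "$P \implies Q$" versus "$\{P\text{-points}\}$ maps into $\{Q\text{-points}\}$," and the only thing to be careful about is handling the image set $f(B(a;\delta))$ correctly — i.e., remembering that an element of it need not have a unique preimage, so one must introduce a witness $x$ with $f(x) = y$ rather than treating $f$ as a bijection. Once that is stated cleanly, both directions close in a line or two each.
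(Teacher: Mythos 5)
Your proof is correct and follows exactly the route the paper intends: the paper states this theorem without a separate proof, having just recorded the same membership criteria ($x \in B(a;\delta) \iff d(a,x) < \delta$ and $y \in B(f(a);\epsilon) \iff d'(y,f(a)) < \epsilon$) that you use to translate the $\varepsilon$--$\delta$ definition into the ball-containment statement. Your explicit handling of the image set via a witness $x$ with $f(x) = y$ is the only bookkeeping needed, and you do it correctly.
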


\begin{remark}
This reformulation shows that continuity means: points sufficiently close to \(a\) in \(X\) are mapped by \(f\) to points sufficiently close to \(f(a)\) in \(Y\).
\end{remark}

\begin{theorem}[see \cite{Mendelson}]
A function \(f:(X, d) \to (Y, d')\) is continuous at \(a \in X\) if and only if, for every \(\epsilon > 0\), there exists \(\delta > 0\) such that
\[
B(a;\delta) \subseteq f^{-1}\big(B(f(a);\epsilon)\big).
\]
\end{theorem}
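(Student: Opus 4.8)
The plan is to reduce this statement to the previous theorem, which already characterises continuity of $f$ at $a$ by the condition $f\big(B(a;\delta)\big) \subseteq B\big(f(a);\epsilon\big)$. The only additional ingredient is the elementary adjunction between images and preimages: for any function $f\colon X \to Y$ and any sets $A \subseteq X$, $S \subseteq Y$, one has $f(A) \subseteq S$ if and only if $A \subseteq f^{-1}(S)$. Applying this with $A = B(a;\delta)$ and $S = B(f(a);\epsilon)$ transforms the image formulation into the preimage formulation verbatim, and the equivalence follows at once.

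Alternatively --- and this is probably cleaner for an expository text --- I would give a direct $\varepsilon$--$\delta$ argument that does not even invoke the earlier theorem. First I would unwind the membership condition: by definition of preimage, $x \in f^{-1}\big(B(f(a);\epsilon)\big)$ means precisely that $f(x) \in B(f(a);\epsilon)$, i.e.\ $d'(f(x), f(a)) < \epsilon$. Hence the inclusion $B(a;\delta) \subseteq f^{-1}\big(B(f(a);\epsilon)\big)$ says exactly this: for every $x \in X$, if $d(x,a) < \delta$ then $d'(f(x), f(a)) < \epsilon$. With this translation in hand, both directions are immediate. If $f$ is continuous at $a$, then for a given $\epsilon > 0$ the definition of continuity supplies a $\delta > 0$ realising the displayed implication, which is precisely the asserted inclusion. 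Conversely, if for every $\epsilon > 0$ such a $\delta$ exists, the same implication is the $\varepsilon$--$\delta$ condition for continuity at $a$, so $f$ is continuous there.

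There is essentially no obstacle here; the only ``hard part'' is bookkeeping --- being careful that $x \in f^{-1}(S) \iff f(x) \in S$ and that the quantifier structure (``for every $\epsilon$ there exists $\delta$'') is carried through unchanged in both directions. I would close with a short remark noting that this theorem, together with the preceding one, is what makes the later passage to the purely topological description of continuity (preimages of open sets are open) natural, since the open balls $B(a;\delta)$ form a basis for the topology induced by the metric.
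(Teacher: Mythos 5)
Your proof is correct. The paper actually states this theorem without any proof, treating it as an immediate companion of the preceding ball-image formulation, and your two arguments supply exactly the missing routine verification: the reduction via the adjunction $f(A) \subseteq S \iff A \subseteq f^{-1}(S)$ applied to the previous theorem, and the direct unwinding of $x \in f^{-1}\bigl(B(f(a);\epsilon)\bigr) \iff d'(f(x),f(a)) < \epsilon$, are both sound, and either one would serve the expository purpose of the text; no gaps.
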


\begin{definition}[see \cite{Mendelson}]
Let $(X, d)$ be a metric space and let $a \in X$.  
A subset $N \subset X$ is called a \textbf{neighborhood} of the point $a$ if there exists a real number $\delta > 0$ such that the open ball centered at $a$ with radius $\delta$ is contained in $N$; that is,
\[
B(a; \delta) \subset N.
\]
The collection of all neighborhoods of \(a\) is denoted by \(\mathfrak{R}_a\) and is called the \textbf{complete system of neighborhoods} of \(a\).
\end{definition}

\begin{remark}
Each open ball around \(a\) is a neighborhood of \(a\), but not every neighborhood must itself be an open ball—it only needs to contain one.
\end{remark}

\begin{lemma}[adapted from \cite{Mendelson}]
Let $(X, d)$ be a metric space and let $a \in X$.  
For every $\delta > 0$, the open ball $B(a; \delta)$ serves as a neighborhood for each point it contains.
\end{lemma}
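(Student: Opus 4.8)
The plan is to unwind the definition of neighborhood directly. To show that $B(a;\delta)$ is a neighborhood of a point $b$ that lies inside it, I must exhibit a radius $r > 0$ for which $B(b;r) \subseteq B(a;\delta)$. The natural choice of $r$ is the ``slack'' between $b$ and the boundary of the ball, namely $r = \delta - d(a,b)$, which is strictly positive precisely because $b \in B(a;\delta)$ means $d(a,b) < \delta$.

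First I would fix an arbitrary $b \in B(a;\delta)$ and set $r := \delta - d(a,b) > 0$. Next I would take an arbitrary $x \in B(b;r)$, so that $d(b,x) < r$, and apply the triangle inequality (the fourth metric axiom) to estimate $d(a,x) \le d(a,b) + d(b,x) < d(a,b) + r = \delta$. This shows $x \in B(a;\delta)$, and since $x$ was arbitrary, $B(b;r) \subseteq B(a;\delta)$. By the definition of a neighborhood, $B(a;\delta)$ is therefore a neighborhood of $b$; and since $b$ was an arbitrary point of $B(a;\delta)$, the lemma follows.

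There is essentially no serious obstacle here — this is the classical argument that open balls are open. The only point that requires a moment's care is checking that the chosen radius $r = \delta - d(a,b)$ is genuinely positive, which is exactly the hypothesis $b \in B(a;\delta)$; once that is in hand, the triangle inequality does all the work. One could optionally close with a remark noting that this lemma is the metric-space prototype of the general topological fact that a set is open if and only if it is a neighborhood of each of its points.
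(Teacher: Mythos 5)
Your proposal is correct and uses exactly the same argument as the paper: choose the slack radius $r = \delta - d(a,b) > 0$ and apply the triangle inequality to show $B(b;r) \subseteq B(a;\delta)$. No differences worth noting.
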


\begin{proof}
Let \(b \in B(a;\delta)\). Since \(d(a,b) < \delta\), choose \(\eta = \delta - d(a,b) > 0\). If \(x \in B(b;\eta)\), then by the triangle inequality
\[
d(a,x) \le d(a,b) + d(b,x) < d(a,b) + \eta = \delta,
\]
so \(x \in B(a;\delta)\). Hence \(B(b;\eta) \subseteq B(a;\delta)\).
\end{proof}

\begin{remark}
This lemma shows that open balls are ``open'' in the intuitive sense: around every point inside them we can fit a smaller ball still contained within.
\end{remark}

\begin{theorem}[Neighborhood characterization of continuity, adapted from \cite{Mendelson}]
Let $f: (X, d) \to (Y, d')$ be a function between metric spaces.  
Then $f$ is continuous at a point $a \in X$ if and only if, for every neighborhood $M$ of $f(a)$, there exists a neighborhood $N$ of $a$ such that  
\[
f(N) \subset M,
\]
which is equivalent to saying that  
\[
N \subset f^{-1}(M).
\]
\end{theorem}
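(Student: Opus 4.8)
The plan is to reduce the statement to the open-ball characterization of continuity proved earlier, namely that $f$ is continuous at $a$ if and only if for every $\epsilon > 0$ there is a $\delta > 0$ with $f(B(a;\delta)) \subseteq B(f(a);\epsilon)$. The bridge between the two formulations is the elementary observation that, for arbitrary sets, $f(N) \subset M$ holds precisely when $N \subset f^{-1}(M)$; I would dispatch this once at the start (using $N \subset f^{-1}(f(N))$ for one inclusion and $f(f^{-1}(M)) \subset M$ for the other), so that afterwards one may freely pass between $f(N)\subset M$ and $N \subset f^{-1}(M)$.

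For the forward implication, assume $f$ is continuous at $a$ and let $M$ be an arbitrary neighborhood of $f(a)$. By the definition of neighborhood there is some $\epsilon > 0$ with $B(f(a);\epsilon) \subset M$. Applying the open-ball characterization to this $\epsilon$ yields a $\delta > 0$ such that $f(B(a;\delta)) \subseteq B(f(a);\epsilon)$. I would then take $N = B(a;\delta)$, which is a neighborhood of $a$ since it contains an open ball centered at $a$ (itself), and chain the inclusions $f(N) = f(B(a;\delta)) \subseteq B(f(a);\epsilon) \subset M$.

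For the reverse implication, assume the neighborhood condition and fix $\epsilon > 0$. The open ball $B(f(a);\epsilon)$ is a neighborhood of $f(a)$, so the hypothesis supplies a neighborhood $N$ of $a$ with $f(N) \subset B(f(a);\epsilon)$. Because $N$ is a neighborhood of $a$, there is a $\delta > 0$ with $B(a;\delta) \subset N$, whence $f(B(a;\delta)) \subseteq f(N) \subseteq B(f(a);\epsilon)$. Since $\epsilon$ was arbitrary, the open-ball characterization delivers continuity of $f$ at $a$.

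I do not expect a genuine obstacle here: the argument is essentially bookkeeping with the definition of neighborhood and the previously established open-ball criterion. The only point deserving a moment's care is the set-theoretic equivalence $f(N) \subset M \iff N \subset f^{-1}(M)$ — in particular, that $N \subset f^{-1}(f(N))$ may be a proper inclusion when $f$ is not injective, so one should argue in the direction that actually holds — together with the small verification that each set $N$ produced really is a neighborhood of $a$, which is immediate in both directions since $N$ is taken to be, or to contain, an open ball centered at $a$.
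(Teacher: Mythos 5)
Your proof is correct and follows essentially the same route as the paper: both directions reduce to the open-ball formulation of continuity, taking $N = B(a;\delta)$ in the forward direction and extracting a ball $B(a;\delta) \subset N$ in the reverse direction. Your preliminary remark on the set-theoretic equivalence $f(N) \subset M \iff N \subset f^{-1}(M)$ is a small, correct addition that the paper's proof leaves implicit.
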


\begin{proof}
($\Rightarrow$) Suppose \(f\) is continuous at \(a\). For any neighborhood \(M\) of \(f(a)\), choose \(\epsilon > 0\) with \(B(f(a);\epsilon) \subseteq M\). By continuity there exists \(\delta > 0\) such that \(f(B(a;\delta)) \subseteq B(f(a);\epsilon)\). Set \(N = B(a;\delta)\).

($\Leftarrow$) Conversely, let \(M = B(f(a);\epsilon)\). By hypothesis there is a neighborhood \(N\) of \(a\) with \(f(N) \subseteq M\). Since \(N\) contains some ball \(B(a;\delta)\), we have \(f(B(a;\delta)) \subseteq M\), proving continuity.
\end{proof}

\begin{theorem}[Equivalent neighborhood formulation, adapted from \cite{Mendelson}]
Let $f: (X, d) \to (Y, d')$ be a function between metric spaces.  
Then $f$ is continuous at a point $a \in X$ if and only if, for every neighborhood $M$ of $f(a)$ in $Y$, the preimage $f^{-1}(M)$ is a neighborhood of $a$ in $X$.
\end{theorem}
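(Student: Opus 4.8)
The plan is to deduce this statement from the preceding \emph{Neighborhood characterization of continuity}, since the two formulations differ only in \emph{how} the ``smaller'' neighborhood of $a$ is produced. The key elementary fact I will use is that the system of neighborhoods of a point is upward closed: if $N$ is a neighborhood of $a$ and $N \subseteq N'$, then $N'$ is also a neighborhood of $a$, because any ball $B(a;\delta) \subseteq N$ automatically satisfies $B(a;\delta) \subseteq N'$. I will also use the purely set-theoretic inclusion $f\big(f^{-1}(M)\big) \subseteq M$, which holds for any function $f$ and any set $M$.

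For the forward implication, assume $f$ is continuous at $a$ and let $M$ be an arbitrary neighborhood of $f(a)$ in $Y$. By the Neighborhood characterization of continuity there is a neighborhood $N$ of $a$ with $f(N) \subseteq M$, equivalently $N \subseteq f^{-1}(M)$. Since $N$ is a neighborhood of $a$ contained in $f^{-1}(M)$, the upward-closure property shows that $f^{-1}(M)$ is itself a neighborhood of $a$, as required. (Alternatively, one may argue directly: continuity supplies $\delta>0$ with $f(B(a;\delta)) \subseteq B(f(a);\epsilon) \subseteq M$, hence $B(a;\delta) \subseteq f^{-1}(M)$.)

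For the converse, suppose that $f^{-1}(M)$ is a neighborhood of $a$ whenever $M$ is a neighborhood of $f(a)$. Fix an arbitrary neighborhood $M$ of $f(a)$ and set $N = f^{-1}(M)$; by hypothesis $N$ is a neighborhood of $a$, and $f(N) = f\big(f^{-1}(M)\big) \subseteq M$. Thus for every neighborhood $M$ of $f(a)$ we have exhibited a neighborhood $N$ of $a$ with $f(N)\subseteq M$, so the Neighborhood characterization of continuity applies and $f$ is continuous at $a$.

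There is no serious obstacle here; the only point deserving care is not to write $f\big(f^{-1}(M)\big) = M$, since equality can fail when $f$ is not surjective — the inclusion $\subseteq$ is all that is needed and all that is true. If one insists on a self-contained argument bypassing the previous theorem, the expected difficulty would merely shift to unwinding the definition of neighborhood on both sides and bookkeeping the radii, which is the same routine $\epsilon$–$\delta$ manipulation already carried out in the earlier proofs.
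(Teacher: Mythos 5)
Your proof is correct. The paper states this theorem without any proof, treating it as an immediate consequence of the preceding Neighborhood characterization of continuity, and your argument is exactly that intended deduction: the forward direction via $N \subseteq f^{-1}(M)$ together with upward closure of the neighborhood system (which you rightly justify directly from the definition, so there is no circularity with the later-stated property $N_3$), and the converse via $f\big(f^{-1}(M)\big) \subseteq M$; your remark that only the inclusion, not equality, holds is also the right point of care.
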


\begin{theorem}[Neighborhood system properties, see \cite{Mendelson}]
Let \((X, d)\) be a metric space. The collection of neighborhoods satisfies:
\begin{enumerate}
  \item[$N_1$] For every point $a \in X$, there exists at least one neighborhood that contains $a$.
  \item[$N_2$] If \(N\) is a neighborhood of \(a\), then \(a \in N\).
  \item[$N_3$] If \(N\) is a neighborhood of \(a\) and \(N' \supseteq N\), then \(N'\) is also a neighborhood of \(a\).
  \item[$N_4$] The intersection of two neighborhoods of \(a\) is a neighborhood of \(a\).
  \item[$N_5$] For each neighborhood \(N\) of \(a\), there exists a neighborhood \(O \subseteq N\) of \(a\) that is also a neighborhood of each of its points.
\end{enumerate}
\end{theorem}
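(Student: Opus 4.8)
The plan is to dispatch the five properties one at a time, in each case simply unwinding the definition: $N$ is a neighborhood of $a$ exactly when $B(a;\delta)\subseteq N$ for some $\delta>0$. The single metric fact that does any work is $d(a,a)=0$, which guarantees $a\in B(a;\delta)$ for every $\delta>0$; I would flag this explicitly the first time it is used.

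For $N_1$ I would exhibit $B(a;1)$ (or $B(a;\delta)$ for any $\delta>0$) as a neighborhood of $a$: it contains itself, and $a$ lies in it because $d(a,a)=0<1$. Property $N_2$ is the same observation read differently: from $B(a;\delta)\subseteq N$ together with $a\in B(a;\delta)$ we conclude $a\in N$. For $N_3$, if $B(a;\delta)\subseteq N$ and $N\subseteq N'$, transitivity of inclusion gives $B(a;\delta)\subseteq N'$, so $N'$ is again a neighborhood of $a$.

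Property $N_4$ is the only one requiring a genuine choice: given neighborhoods $N,N'$ of $a$ with $B(a;\delta)\subseteq N$ and $B(a;\delta')\subseteq N'$, set $\eta=\min\{\delta,\delta'\}>0$; then $B(a;\eta)$ is contained in both balls, hence in $N\cap N'$, so the intersection is a neighborhood of $a$. Finally, for $N_5$, given a neighborhood $N$ of $a$ choose $\delta>0$ with $B(a;\delta)\subseteq N$ and take $O=B(a;\delta)$: then $O\subseteq N$, $O$ is a neighborhood of $a$, and the statement that $O$ is a neighborhood of each of its points is precisely the content of the earlier lemma asserting that open balls are neighborhoods of every point they contain (proved there via the triangle inequality).

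I do not expect a real obstacle: each step is a one-line deduction from the definition of neighborhood. The only nontrivial input is in $N_5$, where the earlier lemma on open balls being ``open'' is invoked; were that lemma unavailable, one would have to re-run the triangle-inequality shrinking argument inline, which is the single place where more than set-theoretic bookkeeping is needed.
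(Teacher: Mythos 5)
Your proposal is correct and follows essentially the same route as the paper: unwind the definition for $N_1$--$N_4$ (taking the minimum radius for $N_4$) and invoke the earlier lemma that open balls are neighborhoods of each of their points for $N_5$. The only cosmetic difference is that for $N_1$ you exhibit the ball $B(a;1)$ while the paper uses $X$ itself; both are equally valid.
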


\begin{proof}
\(N_1\): \(X\) is a neighborhood of every point.  
\(N_2\): Trivial.  
\(N_3\): If \(B(a;\delta) \subseteq N \subseteq N'\), then \(B(a;\delta) \subseteq N'\).  
\(N_4\): If \(B(a;\delta_1)\subseteq N\) and \(B(a;\delta_2)\subseteq M\), then \(B(a;\min\{\delta_1,\delta_2\})\subseteq N\cap M\).  
\(N_5\): Direct from Lemma above.
\end{proof}

\begin{definition}[Basis for neighborhood systems, see \cite{Mendelson}]
Let \(a \in X\). A collection \(\mathfrak{B}_a\) of neighborhoods of \(a\) is a \textbf{basis for the neighborhood system at \(a\)} if every neighborhood \(N\) of \(a\) contains some \(B \in \mathfrak{B}_a\).
\end{definition}

\begin{example}
On the real line \(\mathbb{R}\), a basis for the neighborhood system at \(a\) is the set of open intervals \((a-\delta,a+\delta)\) with \(\delta>0\).
\end{example}

\newpage
\section{Limits}

Before extending the notion of limit to general metric spaces, let us first recall the classical definition for sequences of real numbers.

\begin{definition}[see \cite{Mendelson, negrescu}]
Let $(a_n)_{n \ge 1}$ be a sequence of real numbers. We say that a real number $a$ is the \textbf{limit} of the sequence if for every $\varepsilon > 0$, there exists an integer $N > 0$ such that 
\[
n > N \quad \implies \quad |a_n - a| < \varepsilon.
\]
In this case, we also say that the sequence \((a_n)\) \textbf{converges} to $a$ and write
\[
\lim_{n \to \infty} a_n = a.
\]
\end{definition}

\begin{remark}
Intuitively, $\varepsilon$ represents an arbitrarily small margin of error, while $N$ indicates a position far enough along the sequence so that all subsequent terms lie within this margin of the limit.
\end{remark}

This idea can be generalized to sequences in any metric space $(X, d)$. Let $(a_n)_{n \ge 1}$ be a sequence of points in $X$, and let $a \in X$. Consider the real sequence of distances
\[
d(a, a_1), \ d(a, a_2), \ \dots
\]
It is natural to define the sequence $(a_n)$ as converging to $a$ if the sequence of distances converges to zero.

\begin{definition}[see \cite{Mendelson, negrescu}]
Let $(X, d)$ be a metric space and $(a_n)_{n \ge 1}$ a sequence of points in $X$. A point $a \in X$ is called the \textbf{limit} of $(a_n)$ if
\[
\lim_{n \to \infty} d(a_n, a) = 0.
\]
We then write
\[
\lim_{n \to \infty} a_n = a
\]
and say that $(a_n)$ \textbf{converges} to $a$.
\end{definition}

\begin{remark}
Equivalently, $(a_n)$ converges to $a$ if for every neighborhood $V$ of $a$, there exists an integer $N$ such that
\[
n > N \implies a_n \in V.
\]
\end{remark}

\begin{example}[Convergence in $\mathbb{R}$]
Consider the sequence $a_n = 1/n$ in $\mathbb{R}$ with the usual distance $d(x,y) = |x-y|$. For any $\epsilon > 0$, choose $N$ such that $1/N < \epsilon$. Then for all $n > N$, we have $|a_n - 0| = 1/n < \epsilon$, so $\lim_{n \to \infty} a_n = 0$.
\end{example}

\begin{example}[Convergence in $\mathbb{R}^2$]
Let $a_n = (1/n, (-1)^n/n)$ in $\mathbb{R}^2$ with the Euclidean metric. The distance to the origin is
\[
d(a_n, (0,0)) = \sqrt{(1/n)^2 + ((-1)^n/n)^2} = \sqrt{2}/n \to 0.
\]
Thus $a_n \to (0,0)$ as $n \to \infty$.
\end{example}

\begin{proof}[Neighborhood formulation of convergence]
Suppose $\lim_{n \to \infty} a_n = a$. Let $V$ be a neighborhood of $a$. There exists $\epsilon > 0$ such that $B(a;\epsilon) \subset V$. By convergence, there is an $N$ such that $n > N$ implies $a_n \in B(a;\epsilon) \subset V$.  

Conversely, if for every neighborhood $V$ of $a$, almost all $a_n$ are in $V$, then taking $V = B(a;\epsilon)$ for any $\epsilon > 0$ gives an $N$ such that $n > N$ implies $d(a_n, a) < \epsilon$. Hence $\lim_{n \to \infty} a_n = a$.
\end{proof}

\begin{remark}
If $S$ is an infinite set and a statement fails only for finitely many elements, we say it holds for \emph{almost all} elements. Therefore, $(a_n) \to a$ if for each neighborhood $V$ of $a$, almost all $a_n$ lie in $V$.
\end{remark}

\begin{remark}
Continuity of a function can be equivalently described using sequences: a function $f$ is continuous at $a$ if whenever $a_n \to a$, it follows that $f(a_n) \to f(a)$.
\end{remark}

\begin{theorem}[see \cite{Mendelson}]
Let $(X, d), (Y, d')$ be metric spaces. A function $f: X \to Y$ is continuous at a point $a \in X$ if and only if for every sequence $a_1, a_2, \ldots$ in $X$ that converges to $a$, the sequence $f(a_1), f(a_2), \ldots$ converges to $f(a)$.
\end{theorem}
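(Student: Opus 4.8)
The plan is to prove the two implications separately, with the forward direction being a direct $\varepsilon$--$\delta$ chase and the reverse direction handled by contraposition.

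For the forward implication, I would assume $f$ is continuous at $a$ and take an arbitrary sequence $(a_n)$ in $X$ with $a_n \to a$; the goal is to show $d'(f(a_n), f(a)) \to 0$. Fix $\varepsilon > 0$. Continuity at $a$ supplies a $\delta > 0$ such that $d(x, a) < \delta$ forces $d'(f(x), f(a)) < \varepsilon$. Since $a_n \to a$, applying the definition of convergence with this particular $\delta$ yields an $N$ with $d(a_n, a) < \delta$ for all $n > N$. Chaining the two gives $d'(f(a_n), f(a)) < \varepsilon$ for all $n > N$, which is exactly $f(a_n) \to f(a)$. This half is routine and mirrors the composition-of-continuous-maps argument already given earlier in the chapter.

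For the reverse implication, I would argue the contrapositive: suppose $f$ is \emph{not} continuous at $a$ and produce a sequence $a_n \to a$ for which $f(a_n) \not\to f(a)$. Negating the $\varepsilon$--$\delta$ definition, there is some $\varepsilon_0 > 0$ such that for every $\delta > 0$ there exists a point $x \in X$ with $d(x,a) < \delta$ yet $d'(f(x), f(a)) \ge \varepsilon_0$. Specializing $\delta = 1/n$ for each $n \in \mathbb{Z}_+$, we select such a point and call it $a_n$, so that $d(a_n, a) < 1/n$ and $d'(f(a_n), f(a)) \ge \varepsilon_0$. The first inequality gives $a_n \to a$ (since $1/n \to 0$), while the second shows $d'(f(a_n), f(a))$ cannot tend to $0$, so $f(a_n) \not\to f(a)$. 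This contradicts the hypothesis, completing the proof.

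The main obstacle is the reverse direction: one must negate the definition of continuity correctly and then \emph{construct} the witnessing sequence by choosing, for each $n$, a point satisfying the failed condition at scale $1/n$ (an appeal to a countable form of choice, which at this expository level we use without comment). The forward direction, by contrast, is a mechanical unwinding of definitions and should present no difficulty. I would also remark afterward that the argument uses only the metric structure through open balls, so it extends verbatim to pseudometric spaces, consistent with the remark preceding the statement.
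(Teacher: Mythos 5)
Your proposal is correct and follows essentially the same route as the paper: a direct unwinding of definitions for the forward implication and, for the converse, the contrapositive construction choosing for each $n$ a point $a_n$ within distance $1/n$ of $a$ whose image stays at least $\varepsilon_0$ away from $f(a)$. The only surface difference is that you phrase everything in $\varepsilon$--$\delta$ terms where the paper uses neighborhoods and preimages, which in a metric space amounts to the same argument.
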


\begin{proof}
Suppose $f$ is continuous at $a$ and let $a_n \to a$. For any neighborhood $V$ of $f(a)$, $f^{-1}(V)$ is a neighborhood of $a$. By the definition of convergence, there exists $N$ such that $a_n \in f^{-1}(V)$ for $n > N$. Hence $f(a_n) \in V$ for $n > N$, so $f(a_n) \to f(a)$.

Conversely, if $f$ were not continuous at $a$, there would exist a neighborhood $V$ of $f(a)$ such that for every neighborhood $U$ of $a$, $f(U) \not\subset V$. Choosing $U = B(a; 1/n)$ for $n = 1, 2, \ldots$, we can pick $a_n \in B(a; 1/n)$ with $f(a_n) \notin V$. Then $a_n \to a$ but $f(a_n) \not\to f(a)$, a contradiction.
\end{proof}

\begin{remark}
If $\lim_{n \to \infty} a_n = a$, we may write 
\[
\lim_{n \to \infty} f(a_n) = f(\lim_{n \to \infty} a_n),
\] 
so a continuous function can be viewed as one that commutes with taking limits.
\end{remark}

\begin{example}[Sequence in $\mathbb{R}$, see \cite{Mendelson}]
Consider $a_n = 1/n$. Then $a_n \to 0$ because for any $\varepsilon > 0$, we can choose $N > 1/\varepsilon$ so that $|a_n - 0| = 1/n < \varepsilon$ for $n > N$.
\end{example}

\begin{example}[Continuous function, see \cite{Mendelson}]
Let $f(x) = x^2$ and $a = 2$. For a sequence $x_n \to 2$, we have $f(x_n) = x_n^2 \to 4 = f(2)$, illustrating the theorem.
\end{example}

\begin{example}[Discontinuous function, see \cite{Mendelson}]
Let 
\[
f(x) =
\begin{cases}
0, & x < 0 \\
1, & x \ge 0
\end{cases}
\]
and $a = 0$. Take $x_n = -1/n \to 0$. Then $f(x_n) = 0 \not\to f(0) = 1$, so $f$ is not continuous at $0$.
\end{example}

\begin{definition}[see \cite{Mendelson}]
Let $A \subset \mathbb{R}$. A number $b$ is an \textbf{upper bound} of $A$ if $x \le b$ for all $x \in A$, and $c$ is a \textbf{lower bound} if $c \le x$ for all $x \in A$. If both exist, $A$ is called \textbf{bounded}.

An upper bound $b^*$ is the \textbf{least upper bound} (l.u.b.) of $A$ if $b^* \le b$ for every upper bound $b$ of $A$. Similarly, a lower bound $c^*$ is the \textbf{greatest lower bound} (g.l.b.) if $c \le c^*$ for every lower bound $c$ of $A$.
\end{definition}

\begin{example}[Bounds in $\mathbb{R}$, see \cite{Mendelson}]
Let $A = \{1/n : n \in \mathbb{N}\}$. Then $1$ is an upper bound, $0$ is a lower bound, $\sup A = 1$, and $\inf A = 0$.
\end{example}

Not every set of real numbers is bounded above. One of the fundamental properties of $\mathbb{R}$, known as the completeness property, states that any non-empty set $A \subset \mathbb{R}$ that has an upper bound must have a least upper bound. Similarly, if a non-empty set $B \subset \mathbb{R}$ has a lower bound, consider the set of negatives of elements of $B$. This set has an upper bound, hence a l.u.b., whose negative gives the g.l.b. of $B$. Therefore, every non-empty set of real numbers with a lower bound has a greatest lower bound.

\begin{remark}
The greatest lower bound of a set $A$ may or may not belong to $A$. For example, $0$ is the g.l.b. of $[0,1]$ and $0 \in [0,1]$, whereas $0$ is also the g.l.b. of $(0,1)$ but $0 \notin (0,1)$. In any case, the g.l.b. can be approximated arbitrarily closely by elements of the set.
\end{remark}

\begin{lemma}[see \cite{Mendelson}]
Let $b$ be the greatest lower bound of a non-empty set $A \subset \mathbb{R}$. Then for every $\epsilon > 0$, there exists an element $x \in A$ such that
\[
x - b < \epsilon.
\]
\end{lemma}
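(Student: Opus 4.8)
The plan is to prove the statement by contradiction, using nothing more than the definition of the greatest lower bound recalled just above the lemma --- namely, that $c \le b$ for every lower bound $c$ of $A$. Fix $\epsilon > 0$ and suppose, for contradiction, that no element of $A$ satisfies $x - b < \epsilon$. Negating the existential statement, we obtain $x - b \ge \epsilon$, i.e.\ $x \ge b + \epsilon$, for \emph{every} $x \in A$.

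The key observation is then immediate: this last inequality says precisely that $b + \epsilon$ is a lower bound of $A$. Since $\epsilon > 0$, we have $b + \epsilon > b$, so we have exhibited a lower bound of $A$ that is strictly larger than $b$. This contradicts the defining property of the greatest lower bound, which forces every lower bound of $A$ to be $\le b$. Hence the supposition is untenable, and there must exist some $x \in A$ with $x - b < \epsilon$; as $\epsilon > 0$ was arbitrary, the lemma follows.

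I do not anticipate any genuine obstacle here: the argument is a one-line unwinding of the infimum definition, and the only thing to watch is the strict/non-strict bookkeeping when passing from the negation of ``$\exists\, x \in A : x - b < \epsilon$'' to ``$\forall\, x \in A : x - b \ge \epsilon$''. (Since $b$ is a lower bound we in fact always have $x - b \ge 0$, so all the quantities involved are non-negative, though this is not needed.) If a direct proof were preferred, one could instead simply remark that a number to within $\epsilon$ of which no element of $A$ descends would make $b+\epsilon$ a lower bound exceeding $b$, contradicting maximality --- but the phrasing by contradiction is the most transparent, and it also dovetails with the earlier remark that the g.l.b.\ can be approximated arbitrarily closely by elements of the set.
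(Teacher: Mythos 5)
Your proof is correct and follows essentially the same route as the paper: assume for contradiction that every $x \in A$ satisfies $x - b \ge \epsilon$, observe that $b + \epsilon$ is then a lower bound of $A$ strictly exceeding $b$, and conclude by contradicting the defining property of the greatest lower bound. Your extra remarks on the quantifier negation and non-negativity are fine but not needed.
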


\begin{proof}
Assume for contradiction that there exists $\epsilon > 0$ such that $x - b \ge \epsilon$ for all $x \in A$. Then $b + \epsilon$ would be a lower bound of $A$, contradicting the fact that $b$ is the g.l.b.
\end{proof}

\begin{corollary}[see \cite{Mendelson}]
If $b$ is the greatest lower bound of a non-empty set $A \subset \mathbb{R}$, then there exists a sequence $(a_n)$ with $a_n \in A$ for all $n$ such that
\[
\lim_{n \to \infty} a_n = b.
\]
\end{corollary}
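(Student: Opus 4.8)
The plan is to build the sequence term by term by applying the preceding Lemma to a shrinking sequence of tolerances. Concretely, for each positive integer $n$, I would invoke the Lemma with $\epsilon = 1/n$: this produces an element $a_n \in A$ satisfying $a_n - b < 1/n$. Carrying this out for every $n$ yields a sequence $(a_n)_{n \ge 1}$ all of whose terms lie in $A$, which is the first thing the statement requires.

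Next I would pin down how close each $a_n$ is to $b$. Since $b$ is the greatest lower bound of $A$, it is in particular a lower bound, so $b \le a_n$, i.e. $a_n - b \ge 0$. Combining this with the estimate from the Lemma gives the two-sided bound $0 \le a_n - b < 1/n$, hence $|a_n - b| < 1/n$ for every $n$.

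Finally, I would verify convergence directly from the definition of the limit of a real sequence. Given $\epsilon > 0$, choose an integer $N$ with $N > 1/\epsilon$, which is possible by the Archimedean property of $\mathbb{R}$. Then for every $n > N$ we obtain $|a_n - b| < 1/n < 1/N < \epsilon$, which is exactly the condition $\lim_{n \to \infty} a_n = b$. Equivalently, one could appeal to the earlier description of convergence in a metric space: the distances $d(a_n, b) = |a_n - b|$ form a real sequence dominated by $1/n$ and therefore tend to $0$.

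There is no genuine obstacle here; the substance is entirely carried by the Lemma. The only points worth flagging in the write-up are that one must also use that $b$ is a lower bound — not merely the approximation property — in order to sandwich $a_n - b$ from below, and that constructing the sequence involves choosing one witness $a_n$ for each $n$. Both are routine.
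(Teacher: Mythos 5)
Your proof is correct and follows the same route as the paper: apply the preceding Lemma with $\epsilon = 1/n$ to obtain $a_n \in A$ with $0 \le a_n - b < 1/n$, then conclude $a_n \to b$. Your write-up simply makes explicit the lower-bound sandwich and the Archimedean step that the paper leaves implicit.
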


\begin{proof}
Take $\epsilon = 1/n$. By the lemma, there exists $a_n \in A$ such that $0 \le a_n - b < 1/n$. Hence, $a_n \to b$ as $n \to \infty$.
\end{proof}

\begin{example}[Simple sequence approaching g.l.b., see \cite{Mendelson}]
Let $A = \{1/n : n \in \mathbb{N}\}$. Then $\inf A = 0$, and the sequence $a_n = 1/n \in A$ converges to $0$.
\end{example}

\begin{definition}[see \cite{Mendelson}]
Let $(X, d)$ be a metric space, $a \in X$, and $A \subset X$ a non-empty set. The \textbf{distance from $a$ to $A$} is defined as the greatest lower bound of the set $\{d(a, x) : x \in A\}$ and is denoted by $d(a, A)$.
\end{definition}

\begin{corollary}[see \cite{Mendelson}]
Let $(X, d)$ be a metric space, $a \in X$, and $A \subset X$ non-empty. Then there exists a sequence $(a_n)$ of points in $A$ such that
\[
\lim_{n \to \infty} d(a, a_n) = d(a, A).
\]
\end{corollary}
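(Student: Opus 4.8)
The plan is to unwind the definitions and apply the earlier Corollary about sequences converging to a greatest lower bound. By definition, $d(a, A)$ is the greatest lower bound of the set
\[
S = \{\, d(a, x) : x \in A \,\} \subset \mathbb{R}.
\]
Since $A$ is non-empty, $S$ is non-empty, and since $d$ takes values in $\mathbb{R}$ with $d(a,x) \ge 0$ for all $x$, the set $S$ is bounded below by $0$; hence by the completeness property of $\mathbb{R}$ the greatest lower bound $b = d(a,A)$ exists. This places us exactly in the hypotheses of the Corollary stating that the g.l.b.\ of a non-empty subset of $\mathbb{R}$ is the limit of a sequence drawn from that subset.

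Applying that Corollary to the set $S$, I obtain a sequence $(s_n)$ with $s_n \in S$ for all $n$ and $\lim_{n\to\infty} s_n = b = d(a,A)$. The next step is to transfer this back to $A$: for each $n$, since $s_n \in S = \{d(a,x) : x \in A\}$, there exists (by definition of $S$) a point $a_n \in A$ with $d(a, a_n) = s_n$. Choosing such an $a_n$ for each $n$ produces a sequence $(a_n)$ of points in $A$, and substituting gives
\[
\lim_{n \to \infty} d(a, a_n) = \lim_{n \to \infty} s_n = d(a, A),
\]
which is the desired conclusion.

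There is essentially no hard part here; the statement is an immediate corollary of the already-established real-variable Corollary together with the definition of $d(a,A)$. The only point that deserves a word of care is the selection step — passing from "$s_n$ is a value of the form $d(a,x)$" to "choose a witness $a_n \in A$" — which is a routine use of choice for a sequence and can be made explicit if desired. One could alternatively give a self-contained argument mirroring the proof of the earlier Corollary: for each $n$, apply the defining property of the infimum with $\varepsilon = 1/n$ to find $a_n \in A$ with $d(a, a_n) < d(a,A) + 1/n$, and note $d(a,a_n) \ge d(a,A)$ always holds, so $|d(a,a_n) - d(a,A)| < 1/n \to 0$. I would present the short version via the Corollary, since all the work has already been done.
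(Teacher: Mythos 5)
Your proof is correct and matches the paper's intended argument: the paper leaves this corollary essentially unproved because, exactly as you do, it follows at once by applying the preceding corollary on greatest lower bounds to the set $\{\,d(a,x) : x \in A\,\}$ and choosing witnesses $a_n \in A$ for the selected values. Your remark on the selection step and the alternative $\varepsilon = 1/n$ argument are both fine additions but not departures from the paper's route.
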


\begin{example}[Distance from a point to a set, see \cite{Mendelson}]
In $\mathbb{R}$ with the usual metric, let $a = 0$ and $A = [1,2]$. Then $d(0, A) = 1$, and the sequence $a_n = 1 + 1/n \in A$ satisfies $d(0, a_n) \to 1$.
\end{example}

\newpage
\section{Open Sets and Closed Sets}

As we saw in a previous chapter, the open ball \( B(a; \delta) \) is a neighborhood of each of its points in a metric space. The collection of subsets that have this property has an important role in the study of topology.

\begin{definition}[see \cite{Mendelson}]
Let $(X, d)$ be a metric space. A subset $O \subset X$ is called \textbf{open} if every point $x \in O$ has a neighborhood contained entirely within $O$; that is, for each $x \in O$ there exists $\varepsilon > 0$ such that the open ball $B(x; \varepsilon) \subset O$.
\end{definition}

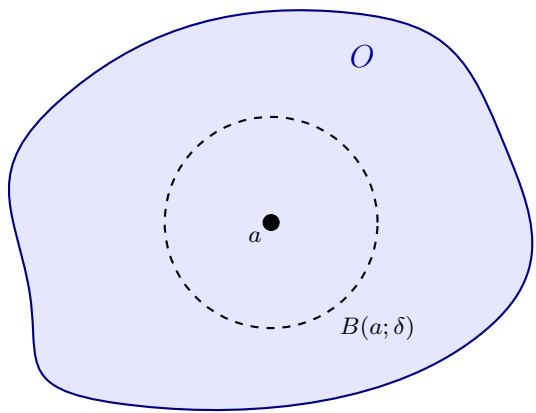
\begin{figure}[h!]
\centering
\begin{tikzpicture}[scale=2.0]

\fill[blue!10!white] 
  plot[smooth cycle, tension=0.9] coordinates {
    (0,0) (2.3,0.4) (2.5,1.8) (1.4,2.6) (-0.4,2.0) (-0.6,0.8)
  };

\draw[blue!50!black, thick] 
  plot[smooth cycle, tension=0.9] coordinates {
    (0,0) (2.3,0.4) (2.5,1.8) (1.4,2.6) (-0.4,2.0) (-0.6,0.8)
  };

\node[blue!60!black] at (1.6,2.3) {\large $O$};

\filldraw[black] (1,1.2) circle (1.5pt);
\node[below left] at (1,1.2) {\small $a$};

\draw[dashed, thick] (1,1.2) circle (0.7);
\node at (1.7,0.5) {\small $B(a;\delta)$};

\end{tikzpicture}
\caption{An open set \( O \) in a metric space that contains an open ball \( B(a; \delta) \) around each of its points.}
\end{figure}

\begin{remark}
Intuitively, a set is open if none of its points are ``on the edge'' of the set. Around every point, no matter which one we pick, we can always find a small open ball that still lies completely inside the set. For example, in $\mathbb{R}$ with the usual metric, the interval $(0,1)$ is open because for each $x \in (0,1)$, we can take a small interval $(x - \varepsilon, x + \varepsilon)$ that stays inside $(0,1)$. In contrast, $[0,1]$ is not open, since near $0$ or $1$ there is no such interval fully contained in it.
\end{remark}

\begin{theorem}[see \cite{Mendelson}]
A subset $O$ of a metric space $(X, d)$ is open if and only if it can be expressed as a union of open balls.
\end{theorem}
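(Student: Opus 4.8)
The plan is to establish the biconditional by proving each implication separately, leaning only on the definition of an open set and on the lemma proved earlier in the section on open balls and neighborhoods, namely that every open ball is a neighborhood of each of its points.

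For the forward implication, suppose $O$ is open. By definition, for every $x \in O$ there exists $\varepsilon_x > 0$ with $B(x;\varepsilon_x) \subseteq O$. I would then claim that
\[
O = \bigcup_{x \in O} B(x;\varepsilon_x),
\]
and verify this by double inclusion: the inclusion ``$\supseteq$'' is immediate since each ball in the union lies inside $O$; the inclusion ``$\subseteq$'' holds because every $x \in O$ satisfies $x \in B(x;\varepsilon_x)$, so $x$ belongs to the union. (If $O = \emptyset$, it is the empty union of open balls, so the statement holds vacuously.)

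For the reverse implication, suppose $O = \bigcup_{i \in I} B(a_i;\delta_i)$ for some family of open balls indexed by a set $I$. Given $x \in O$, choose an index $i \in I$ with $x \in B(a_i;\delta_i)$. By the lemma recalled above, $B(a_i;\delta_i)$ is a neighborhood of $x$; concretely, setting $\eta = \delta_i - d(a_i,x) > 0$, the triangle inequality yields $B(x;\eta) \subseteq B(a_i;\delta_i) \subseteq O$. Hence $x$ possesses an open ball about it contained in $O$, and since $x \in O$ was arbitrary, $O$ is open by definition.

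I do not expect a genuine obstacle here: the only truly metric-theoretic step — the triangle-inequality estimate $B(x;\eta) \subseteq B(a_i;\delta_i)$ — has already been isolated and proved as a lemma, so its use is immediate. The only points needing a little care are handling an arbitrary (possibly infinite or empty) index set, and being explicit that the ``neighborhood contained in $O$'' demanded by the definition of openness may be taken to be an open ball itself, which is precisely what that lemma guarantees.
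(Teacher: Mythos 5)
Your proposal is correct and follows essentially the same route as the paper: writing an open $O$ as $\bigcup_{x\in O} B(x;\varepsilon_x)$ for the forward direction, and for the converse using the lemma that an open ball is a neighborhood of each of its points (via the triangle-inequality estimate $B(x;\eta)\subseteq B(a_i;\delta_i)$) to conclude openness. Your extra care with the double inclusion, the empty union, and the arbitrary index set only makes explicit what the paper leaves implicit.
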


\begin{proof}
Assume first that $O$ is open. Then, by definition, for every $a \in O$ there exists $\delta_a > 0$ such that $B(a; \delta_a) \subset O$. Consequently,
\[
O = \bigcup_{a \in O} B(a; \delta_a),
\]
so $O$ is indeed a union of open balls.

Conversely, suppose that $O$ can be written as a union of open balls:
\[
O = \bigcup_{\alpha \in I} B(a_\alpha; \delta_\alpha),
\]
where $I$ is an indexing set. If $x \in O$, then $x$ belongs to one of these balls, say $B(a_\beta; \delta_\beta)$. Since $B(a_\beta; \delta_\beta)$ is itself open, it is a neighborhood of $x$, and moreover $B(a_\beta; \delta_\beta) \subset O$. Hence $O$ is a neighborhood of each of its points and therefore open.
\end{proof}

\begin{example}
Consider the real line $\mathbb{R}$ with the usual distance $d(x, y) = |x - y|$. The open interval $(0,1)$ can be written as the union of open balls $B(x; r_x)$ where $r_x = \min\{x, 1 - x\}$. Indeed, for every $x \in (0,1)$, this radius ensures that the ball $B(x; r_x)$ is entirely contained within $(0,1)$. This example illustrates concretely how an open set can be built as a union of open balls centered at each of its points.
\end{example}

\begin{remark}
The characterization of open sets as unions of open balls is not only elegant, but also practical. It means that in a metric space, the open balls form a \emph{basis} for the topology: every open set can be constructed by combining these building blocks. This viewpoint is central to the way topology generalizes concepts of openness beyond metric spaces.
\end{remark}

\begin{theorem}[see \cite{Kaplansky}]
Every open ball in a metric space is an open set.
\end{theorem}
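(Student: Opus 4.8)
The plan is to reduce the statement directly to the definition of an open set together with the lemma already established in the previous section. Recall that a subset $O \subseteq X$ is open precisely when every point of $O$ admits a neighborhood contained in $O$, and recall the lemma (adapted from Mendelson) asserting that for any $a \in X$ and any $\delta > 0$, the open ball $B(a;\delta)$ is a neighborhood of each of the points it contains. Placing these two facts side by side, openness of $B(a;\delta)$ is immediate: each of its points has a neighborhood (namely a suitable smaller ball) lying inside it.

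Spelling out the one computation underlying the lemma, so the argument is self-contained: given an arbitrary point $b \in B(a;\delta)$, the defining inequality $d(a,b) < \delta$ is strict, so the quantity $\eta = \delta - d(a,b)$ is strictly positive. I would then check that $B(b;\eta) \subseteq B(a;\delta)$: for any $x \in B(b;\eta)$, the triangle inequality gives $d(a,x) \le d(a,b) + d(b,x) < d(a,b) + \eta = \delta$, hence $x \in B(a;\delta)$. Since $b$ was arbitrary, every point of $B(a;\delta)$ has an open ball around it contained in $B(a;\delta)$, which is exactly the condition for $B(a;\delta)$ to be open.

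As an alternative piece of bookkeeping, one could instead invoke the theorem that a set is open if and only if it is a union of open balls, and observe that $B(a;\delta)$ is (trivially) such a union — the union consisting of the single ball itself. Either route works; I would prefer the first, since it exposes the triangle-inequality mechanism directly rather than hiding it behind a prior characterization.

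There is essentially no hard part here: the only point requiring the slightest care is noticing that the inequality $d(a,b) < \delta$ defining membership in an open ball is \emph{strict}, which is precisely what guarantees $\eta > 0$ and hence that $B(b;\eta)$ is a genuine ball of positive radius. Were the ball instead defined with $\le$ (a ``closed ball''), the same argument would break down at boundary points, and indeed closed balls need not be open.
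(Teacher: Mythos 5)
Your argument is correct and is essentially the paper's own proof: pick $b \in B(a;\delta)$, set $\eta = \delta - d(a,b) > 0$, and use the triangle inequality to show $B(b;\eta) \subseteq B(a;\delta)$, which is exactly the computation in the paper. The alternative routes you mention (the neighborhood lemma or the union-of-balls characterization) are fine shortcuts, but your preferred direct version matches the published proof.
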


\begin{proof}
Let $(X,d)$ be a metric space and fix $a\in X$ and $r>0$. Take any $x\in B(a;r)$, so $d(a,x)<r$, and put $\varepsilon:=r-d(a,x)>0$. If $y$ satisfies $d(x,y)<\varepsilon$ then by the triangle inequality
\[
d(a,y)\le d(a,x)+d(x,y)<d(a,x)+\varepsilon=r,
\]
hence $y\in B(a;r)$. Thus $B(x;\varepsilon)\subset B(a;r)$, so $B(a;r)$ is a neighborhood of each of its points and therefore open.
\end{proof}

\begin{theorem}[see \cite{Kaplansky}]
Any union of open sets in a metric space is open.
\end{theorem}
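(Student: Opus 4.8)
The plan is to argue directly from the definition of an open set in a metric space. Let $\{O_\alpha\}_{\alpha \in I}$ be an arbitrary family of open subsets of the metric space $(X, d)$, and set $O = \bigcup_{\alpha \in I} O_\alpha$. I want to show that $O$ satisfies the defining property: every point of $O$ has a neighborhood (indeed an open ball) contained entirely in $O$.

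First I would take an arbitrary point $x \in O$. By the definition of union, there is at least one index $\beta \in I$ for which $x \in O_{\beta}$. Since $O_{\beta}$ is open by hypothesis, the definition of openness supplies some $\varepsilon > 0$ such that $B(x; \varepsilon) \subset O_{\beta}$. Then, because $O_{\beta}$ is one of the sets in the union, $O_{\beta} \subset O$, and hence $B(x; \varepsilon) \subset O$ by transitivity of inclusion. Since $x \in O$ was arbitrary, every point of $O$ has an open ball about it contained in $O$, which is exactly the statement that $O$ is open.

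I do not expect a genuine obstacle here; the result is essentially immediate once the definition of open set is unwound, and it does not even require the triangle inequality or any metric axiom beyond what is already built into the notion of an open ball. The only point requiring a sliver of care is the use of the axiom of choice flavoured step ``pick an index $\beta$ with $x \in O_\beta$'' — but this is just a single existential instantiation per point, not a simultaneous choice, so it is unproblematic. If anything, the main thing to get right stylistically is to keep the argument parallel to the earlier proof that open balls are open sets, so that the reader sees the pattern: localize to a point, use openness of a member of the family to extract a ball, and enlarge the ambient set via the inclusion $O_\beta \subset O$.
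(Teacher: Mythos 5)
Your proposal is correct and follows essentially the same argument as the paper: pick a point in the union, locate it in some member $O_{\beta}$ of the family, extract a ball $B(x;\varepsilon)\subset O_{\beta}\subset O$, and conclude. The paper's only cosmetic difference is that it mentions the empty-family case explicitly, which your formulation handles vacuously anyway.
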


\begin{proof}
Let $\{O_\alpha\}_{\alpha\in A}$ be a family of open subsets of $(X,d)$ and set $O=\bigcup_{\alpha\in A}O_\alpha$. 
If $A=\varnothing$ then $O=\varnothing$, which is open; otherwise, pick $x\in O$. Then $x\in O_{\alpha_0}$ for some $\alpha_0\in A$, and since $O_{\alpha_0}$ is open there exists $\varepsilon>0$ with $B(x;\varepsilon)\subset O_{\alpha_0}\subset O$. Hence every $x\in O$ has a neighborhood contained in $O$, so $O$ is open.
\end{proof}

\begin{corollary}
A finite intersection of open sets is open.
\end{corollary}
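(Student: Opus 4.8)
The plan is to reduce the general finite case to the two-set case and then induct. So the first step is to prove the auxiliary claim: if $O_1$ and $O_2$ are open subsets of the metric space $(X,d)$, then $O_1 \cap O_2$ is open. If the intersection is empty there is nothing to check (the empty set is open), so pick $x \in O_1 \cap O_2$. Since $O_1$ is open there is $\varepsilon_1 > 0$ with $B(x;\varepsilon_1) \subset O_1$, and since $O_2$ is open there is $\varepsilon_2 > 0$ with $B(x;\varepsilon_2) \subset O_2$. Setting $\varepsilon = \min\{\varepsilon_1, \varepsilon_2\} > 0$, every $y$ with $d(x,y) < \varepsilon$ satisfies both $d(x,y) < \varepsilon_1$ and $d(x,y) < \varepsilon_2$, hence lies in both $O_1$ and $O_2$; thus $B(x;\varepsilon) \subset O_1 \cap O_2$. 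This is exactly the argument recorded as property $N_4$ for neighborhood systems earlier in the paper, so one could also just cite that.

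The second step is a straightforward induction on $n$, the number of sets $O_1, \dots, O_n$. For the base case $n = 1$ the claim is trivial, and (if one wishes to allow it) the empty intersection equals $X$, which is open. For the inductive step, write $O_1 \cap \cdots \cap O_{n+1} = (O_1 \cap \cdots \cap O_n) \cap O_{n+1}$; by the inductive hypothesis the first factor is open, and by the two-set case the whole expression is open.

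There is no real obstacle here — every step is a direct application of the definition of open set together with the fact that the minimum of finitely many positive reals is positive. The only point worth flagging is precisely where finiteness is essential: taking $\varepsilon = \min\{\varepsilon_1, \dots, \varepsilon_n\}$ only guarantees $\varepsilon > 0$ because the index set is finite, which is why the statement does not extend to arbitrary intersections (for instance $\bigcap_{n \ge 1} (-1/n, 1/n) = \{0\}$ in $\mathbb{R}$, which is not open). It may be worth adding a sentence or a short example to the proof making this contrast explicit.
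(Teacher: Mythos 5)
Your proof is correct and follows the same route as the paper: establish the two-set case by taking the minimum of the two radii, then extend to finitely many sets by induction. The extra remarks on why finiteness matters (and the counterexample $\bigcap_{n\ge 1}(-1/n,1/n)=\{0\}$) match the paper's own subsequent remark and are a nice touch, but nothing in the argument differs in substance.
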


\begin{proof}
For two sets the proof is direct: if $U,V$ are open and $x\in U\cap V$, choose $\varepsilon_1,\varepsilon_2>0$ with $B(x;\varepsilon_1)\subset U$ and $B(x;\varepsilon_2)\subset V$. Then $B(x;\min\{\varepsilon_1,\varepsilon_2\})\subset U\cap V$. The general finite case follows by induction.
\end{proof}

\begin{remark}
As immediate consequences we have that $\varnothing$ and $X$ are open, and that arbitrary unions and finite intersections of open sets remain open. Infinite intersections of open sets need not be open (e.g.\ $\bigcap_{n\ge1}(-1/n,1/n)=\{0\}$ in $\mathbb{R}$).
\end{remark}

\begin{theorem}[see \cite{Mendelson}]
Let $f:(X,d)\to (Y,d')$ be a map between metric spaces. Then $f$ is continuous on $X$ if and only if for every open set $O\subset Y$ the preimage $f^{-1}(O)$ is open in $X$.
\end{theorem}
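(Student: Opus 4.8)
The plan is to prove both implications by reducing everything to two facts already available: the open-ball characterization of continuity at a point (namely that $f$ is continuous at $a$ iff for every $\varepsilon>0$ there is $\delta>0$ with $f(B(a;\delta))\subseteq B(f(a);\varepsilon)$) and the theorem that every open ball in a metric space is itself an open set. I will also use the definition of "continuous on $X$" as meaning continuous at every point of $X$, so that each direction is handled pointwise.

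For the forward direction I would assume $f$ is continuous on $X$ and fix an arbitrary open set $O\subseteq Y$; the goal is to show $f^{-1}(O)$ is open in $X$. I would take any $a\in f^{-1}(O)$, so that $f(a)\in O$, and since $O$ is open there is some $\varepsilon>0$ with $B(f(a);\varepsilon)\subseteq O$. Applying continuity of $f$ at $a$ produces a $\delta>0$ with $f(B(a;\delta))\subseteq B(f(a);\varepsilon)\subseteq O$, hence $B(a;\delta)\subseteq f^{-1}(O)$. Since $a$ was arbitrary, $f^{-1}(O)$ contains an open ball around each of its points and is therefore open by definition.

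For the converse I would assume that $f^{-1}(O)$ is open in $X$ whenever $O\subseteq Y$ is open, and verify continuity at an arbitrary $a\in X$. Given $\varepsilon>0$, the set $B(f(a);\varepsilon)$ is open in $Y$ by the theorem that open balls are open sets, so by hypothesis $f^{-1}(B(f(a);\varepsilon))$ is open in $X$; it contains $a$, so there is $\delta>0$ with $B(a;\delta)\subseteq f^{-1}(B(f(a);\varepsilon))$, i.e. $f(B(a;\delta))\subseteq B(f(a);\varepsilon)$. By the open-ball characterization of continuity this shows $f$ is continuous at $a$, and since $a$ was arbitrary, $f$ is continuous on $X$.

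There is no genuine obstacle in this argument; it is short and routine once the earlier results are in place. The one point that deserves care is the converse direction, where it is essential to invoke that open balls in the codomain are themselves open sets — this is exactly what lets the hypothesis on preimages be applied to $B(f(a);\varepsilon)$; without it the hypothesis would give no leverage. I would also be careful to phrase both directions in terms of the pointwise definition so that the translation between the neighborhood/open-ball formulation and the preimage formulation is transparent.
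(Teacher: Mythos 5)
Your proof is correct and follows essentially the same route as the paper: in the forward direction you place an $\varepsilon$-ball inside $O$ around $f(a)$ and pull back a $\delta$-ball via continuity, and in the converse you apply the hypothesis to the open ball $B(f(a);\varepsilon)$ and extract a $\delta$-ball inside its preimage. The only cosmetic difference is that you explicitly cite the ``open balls are open'' theorem, which the paper uses implicitly.
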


\begin{proof}
$(\Rightarrow)$ Suppose $f$ is continuous. Let $O\subset Y$ be open and take $x\in f^{-1}(O)$, so $f(x)\in O$. Since $O$ is a neighborhood of $f(x)$ there exists $\varepsilon>0$ with $B_Y(f(x);\varepsilon)\subset O$. By continuity at $x$ there is $\delta>0$ such that $d(x,y)<\delta$ implies $d'(f(x),f(y))<\varepsilon$, so $f(B_X(x;\delta))\subset B_Y(f(x);\varepsilon)\subset O$. Hence $B_X(x;\delta)\subset f^{-1}(O)$ and $f^{-1}(O)$ is open.

$(\Leftarrow)$ Conversely, assume preimages of open sets are open. Fix $a\in X$ and $\varepsilon>0$. Then $B_Y(f(a);\varepsilon)$ is open, so $U:=f^{-1}(B_Y(f(a);\varepsilon))$ is open and contains $a$. Thus there exists $\delta>0$ with $B_X(a;\delta)\subset U$. For $x$ with $d(x,a)<\delta$ we have $f(x)\in B_Y(f(a);\varepsilon)$, i.e.\ $d'(f(x),f(a))<\varepsilon$. Hence $f$ is continuous at $a$. Since $a$ was arbitrary, $f$ is continuous on $X$.
\end{proof}

\begin{example}
As an illustration, consider the projection \(\pi:\mathbb{R}^2\to\mathbb{R}\), \(\pi(x,y)=x\). If \(O\subset\mathbb{R}\) is open then \(\pi^{-1}(O)=O\times\mathbb{R}\) is open in \(\mathbb{R}^2\); hence \(\pi\) is continuous by the theorem above. This matches the usual \(\varepsilon\)-\(\delta\) verification but the preimage criterion is often shorter and more conceptual.
\end{example}

Just as the collection of neighborhoods of points in a metric space satisfies certain fundamental properties, so does the collection of open sets. 
These properties are essential because they describe the algebraic structure of openness — the way open sets behave under union, intersection, and complementation.

\begin{theorem}[see \cite{Mendelson}]
Let \( (X, d) \) be a metric space. Then the collection of open subsets of \( X \) satisfies the following properties:

\begin{enumerate}
    \item[\( O_1 \).] The empty set \( \varnothing \) is open.
    \item[\( O_2 \).] The entire space \( X \) is open.
    \item[\( O_3 \).] The intersection of finitely many open sets is open; that is, if \( O_1, O_2, \ldots, O_n \) are open, then
    \[
    O_1 \cap O_2 \cap \cdots \cap O_n
    \]
    is open.
    \item[\( O_4 \).] The union of any (finite or infinite) family of open sets is open; that is, if \( \{O_\alpha\}_{\alpha \in I} \) is a family of open sets, then
    \[
    \bigcup_{\alpha \in I} O_\alpha
    \]
    is open.
\end{enumerate}
\end{theorem}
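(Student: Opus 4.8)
The plan is to check the four properties one at a time, and to notice at the outset that two of them are already in hand from earlier in this section: property $O_4$ is precisely the theorem that an arbitrary union of open sets is open, and property $O_3$ is precisely the corollary that a finite intersection of open sets is open. So for these two I would simply invoke those results, or, for the reader's convenience, briefly recall their one-line arguments. The only remaining content is $O_1$ and $O_2$, which are essentially formal.

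For $O_1$, I would observe that $\varnothing$ satisfies the definition of openness vacuously: the requirement is that every point of $\varnothing$ admit an open ball about it contained in $\varnothing$, and since $\varnothing$ has no points there is nothing to verify. For $O_2$, I would fix an arbitrary $x \in X$ and note that the open ball $B(x;1) = \{y \in X : d(x,y) < 1\}$ is, by its very definition, a subset of $X$; hence $X$ is a neighborhood of each of its points and is therefore open. For $O_3$, the step to recall is that given $x \in O_1 \cap \cdots \cap O_n$, one chooses $\varepsilon_i > 0$ with $B(x;\varepsilon_i) \subset O_i$ for each $i$ and then sets $\varepsilon = \min\{\varepsilon_1, \ldots, \varepsilon_n\}$; for $O_4$, given $x$ in the union one picks a single index $\alpha_0$ with $x \in O_{\alpha_0}$ and uses openness of that one set, since $B(x;\varepsilon) \subset O_{\alpha_0}$ already lies in the whole union.

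I do not expect a genuine obstacle here, since the statement repackages facts already established. The one point worth stressing — and the only place the argument could go wrong if one were careless — is the finiteness hypothesis in $O_3$: the minimum of finitely many positive numbers is positive, but an infimum of infinitely many positive radii can be $0$, so the same argument does not extend to infinite intersections; the example $\bigcap_{n \ge 1}(-1/n, 1/n) = \{0\}$ in $\mathbb{R}$, noted in the preceding remark, shows the conclusion really does fail without it.
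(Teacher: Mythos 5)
Your proposal is correct and follows essentially the same route as the paper's proof: $O_1$ vacuously, $O_2$ via a ball contained in $X$, $O_3$ via the minimum of finitely many radii, and $O_4$ by passing to a single member of the family; your observation that $O_3$ and $O_4$ merely restate the earlier theorem and corollary in this section is accurate, and the cautionary remark about infinite intersections matches the paper's own example.
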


\begin{proof}
\textbf{(\( O_1 \))} The empty set is open because there is no point in \( \varnothing \) at which the definition of openness could fail.  

\textbf{(\( O_2 \))} Given any \( a \in X \) and any \( \varepsilon > 0 \), the open ball \( B(a;\varepsilon) \subseteq X \), so \( X \) is a neighborhood of each of its points. Thus \( X \) is open.

\textbf{(\( O_3 \))} Let \( a \in O_1 \cap O_2 \cap \cdots \cap O_n \), where each \( O_i \) is open. Then each \( O_i \) contains a ball \( B(a;\varepsilon_i) \subseteq O_i \). Taking 
\[
\varepsilon = \min\{\varepsilon_1, \varepsilon_2, \ldots, \varepsilon_n\},
\]
we obtain \( B(a;\varepsilon) \subseteq O_1 \cap O_2 \cap \cdots \cap O_n \). Hence the intersection is a neighborhood of each of its points and is therefore open.

\textbf{(\( O_4 \))} Let \( O = \bigcup_{\alpha \in I} O_\alpha \), with each \( O_\alpha \) open. If \( x \in O \), then \( x \in O_{\alpha_0} \) for some \( \alpha_0 \in I \), and there exists \( \varepsilon > 0 \) such that \( B(x;\varepsilon) \subseteq O_{\alpha_0} \subseteq O \). Hence \( O \) is a neighborhood of each of its points and is open.
\end{proof}

\begin{definition}[see \cite{Mendelson}]
A subset \( F \) of a metric space \( (X, d) \) is said to be \textbf{closed} if its complement, \( C(F) = X \setminus F \), is open.
\end{definition}

The notions of open and closed are not opposites in the usual sense: a set may be both, or neither.  
For example, in the real number system, the closed interval \([a,b]\) is closed because its complement is the union of the two open sets
\[
(-\infty, a) \quad \text{and} \quad (b, \infty).
\]
On the other hand, the open interval \((a,b)\) is open because every point inside it lies at the center of some smaller open subinterval.

A common misconception is that a set cannot be both open and closed.  
However, in any metric space \((X,d)\), both \( X \) and \( \varnothing \) are open, and since the complement of an open set is closed, both \( X \) and \( \varnothing \) are also closed.  
Thus, these two sets are simultaneously open and closed (often called *clopen* sets).

Whether other nontrivial subsets of \( X \) share this property depends on the topology of the space; for instance, a metric space is called \textbf{connected} if the only clopen subsets are \( X \) and \( \varnothing \).

Finally, it is worth noting that some sets are neither open nor closed.  
For example, in \( \mathbb{R} \),
\[
A = [0,1)
\]
is not open because 0 does not have a neighborhood contained in \( A \), and it is not closed because its complement does not contain all its limit points (1 is a limit point of \( A \), yet \( 1 \notin A \)).

The concepts of openness and closedness are deeply connected through the idea of \emph{limit points}.  
A limit point represents, in an intuitive sense, a point that can be "approached" by elements of a given set without necessarily belonging to that set.

\begin{definition}[see \cite{Mendelson}]
Let \( A \subset X \) be a subset of a metric space \( (X, d) \).  
A point \( b \in X \) is called a \textbf{limit point} (or \textbf{accumulation point}) of \( A \) if every neighborhood of \( b \) contains at least one point of \( A \) distinct from \( b \) itself.
\end{definition}

In other words, no matter how small a ball we draw around \( b \), we always find some point of \( A \) (other than \( b \)) inside it.  
This captures the idea that points of \( A \) can get arbitrarily close to \( b \).

If \( b \) is a limit point of \( A \), then for each \( n \in \mathbb{N} \), the open ball \( B\!\left(b; \tfrac{1}{n}\right) \) contains some point \( a_n \in A \setminus \{b\} \).  
Thus the sequence \( (a_n) \) satisfies \( \lim_{n \to \infty} a_n = b \).  
Therefore, every limit point of \( A \) can be obtained as the limit of a sequence of distinct points of \( A \).

However, the converse is not always true.  
If \( b \in A \) and there exists some \( \delta > 0 \) such that \( B(b; \delta) \cap A = \{b\} \), then no other point of \( A \) approaches \( b \).  
In this case, although the constant sequence \( (b,b,b,\ldots) \) converges to \( b \), the point \( b \) is not a limit point of \( A \).  
Such a point is called an \textbf{isolated point} of \( A \).

\begin{example}
In \( \mathbb{R} \), consider \( A = \{0\} \cup (1,2) \).  
Every point of the open interval \( (1,2) \) is a limit point of \( A \), while \( 0 \) is an isolated point because there exists a ball \( B(0; \tfrac{1}{2}) \) that contains no point of \( A \) other than \( 0 \) itself.
\end{example}

\begin{theorem}[see \cite{Mendelson}]
In a metric space \( X \), a set \( F \subset X \) is closed if and only if it contains all its limit points.
\end{theorem}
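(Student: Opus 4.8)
The plan is to prove both implications directly from the definitions: "closed" means the complement is open, and "$b$ is a limit point of $F$" means every neighborhood of $b$ meets $F$ in a point other than $b$. In both directions I would argue by contraposition on membership, and I would freely use the fact, already established, that a set is open precisely when it is a neighborhood of each of its points, together with the fact that every open ball is open.

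For the forward direction, I would assume $F$ is closed and let $b$ be a limit point of $F$; the goal is $b \in F$. Suppose instead $b \in X \setminus F$. Since $F$ is closed, $X \setminus F$ is open, so there is some $\varepsilon > 0$ with $B(b;\varepsilon) \subset X \setminus F$, i.e. $B(b;\varepsilon) \cap F = \varnothing$. Then the neighborhood $B(b;\varepsilon)$ of $b$ contains no point of $F$ whatsoever, in particular none distinct from $b$, contradicting that $b$ is a limit point of $F$. Hence $b \in F$, so $F$ contains all its limit points.

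For the converse, I would assume $F$ contains all its limit points and show $X \setminus F$ is open. Fix $x \in X \setminus F$. Since $x \notin F$, the hypothesis forces $x$ not to be a limit point of $F$ (otherwise $x$ would lie in $F$). Negating the limit-point condition, there is a neighborhood of $x$, hence an open ball $B(x;\varepsilon)$ inside it, meeting $F$ in at most the single point $x$; and since $x \notin F$, this intersection is actually empty, so $B(x;\varepsilon) \subset X \setminus F$. Thus every point of $X \setminus F$ has an open ball about it contained in $X \setminus F$, so $X \setminus F$ is open and $F$ is closed.

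There is no computation involved; the only place needing care is the bookkeeping around the convention that a limit point of $F$ need not belong to $F$ and that the defining condition ignores the point itself. In the converse I must remember to invoke $x \notin F$ to upgrade "$B(x;\varepsilon)$ meets $F$ in at most $\{x\}$" to "$B(x;\varepsilon)$ is disjoint from $F$" — this is the one step where the two definitions genuinely have to be combined rather than merely quoted, and I expect it to be the main (very mild) obstacle.
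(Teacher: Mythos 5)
Your proof is correct and takes essentially the same route as the paper: both directions rest on the equivalence between openness of the complement and the non-existence of limit points of \(F\) outside \(F\). In fact you are slightly more careful than the paper's own converse, where negating the limit-point condition only gives \(B(x;\varepsilon)\cap F \subseteq \{x\}\) and the hypothesis \(x \notin F\) is needed to conclude the intersection is empty --- a step you handle explicitly.
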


\begin{proof}
Let \( F' \) denote the set of all limit points of \( F \).

\textbf{($\Rightarrow$)} Suppose \( F \) is closed. Then its complement \( C(F) = X \setminus F \) is open.  
Let \( b \notin F \). Since \( C(F) \) is open, there exists some \( \delta > 0 \) such that \( B(b; \delta) \subset C(F) \), which means that \( B(b; \delta) \cap F = \emptyset \).  
Hence no neighborhood of \( b \) meets \( F \) at a point other than \( b \), and thus \( b \) is not a limit point of \( F \).  
Therefore, \( F' \subset F \).

\textbf{($\Leftarrow$)} Conversely, suppose \( F' \subset F \).  
Let \( b \in C(F) = X \setminus F \). Then \( b \notin F' \), meaning there exists \( \delta > 0 \) such that \( B(b; \delta) \cap F = \emptyset \).  
Thus \( B(b; \delta) \subset C(F) \), showing that \( C(F) \) is open.  
Consequently, \( F \) is closed.
\end{proof}

\begin{remark}
This theorem is fundamental: it allows us to define closedness without referring to complements or open sets, but purely in terms of the internal structure of the set and its limit points.
\end{remark}

We can restate this result in terms of sequences, which is particularly intuitive in metric spaces.

\begin{theorem}[Sequential characterization of closed sets, see \cite{Mendelson}]
In a metric space \( (X, d) \), a set \( F \subset X \) is closed if and only if for every sequence \( (a_n) \) of points of \( F \) that converges to some point \( a \in X \), we have \( a \in F \).
\end{theorem}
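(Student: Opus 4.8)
The plan is to reduce this statement to the limit-point characterization of closed sets established just above, namely that $F$ is closed if and only if $F' \subset F$, where $F'$ denotes the set of limit points of $F$. Both implications then follow from routine $\varepsilon$-$\delta$ arguments, together with the observation already noted in the text that a point $b$ is a limit point of $F$ precisely when there is a sequence of points of $F \setminus \{b\}$ converging to $b$.

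First I would prove the forward direction ($\Rightarrow$). Assume $F$ is closed and let $(a_n)$ be a sequence in $F$ with $a_n \to a$ for some $a \in X$; the goal is $a \in F$. Arguing by contradiction, suppose $a \in C(F) = X \setminus F$. Since $F$ is closed, $C(F)$ is open, so there exists $\delta > 0$ with $B(a;\delta) \subset C(F)$, i.e.\ $B(a;\delta) \cap F = \varnothing$. But $a_n \to a$ means $d(a_n, a) < \delta$ for all sufficiently large $n$, so $a_n \in B(a;\delta)$ for such $n$, contradicting $a_n \in F$. Hence $a \in F$.

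Next, the converse ($\Leftarrow$). Assume every convergent sequence of points of $F$ has its limit in $F$; I would show $F' \subset F$ and then invoke the previous theorem. Let $b \in F'$ be a limit point of $F$. For each $n \in \mathbb{N}$ the ball $B(b; 1/n)$ contains some point $a_n \in F$ with $a_n \neq b$, by the definition of a limit point. Then $d(a_n, b) < 1/n \to 0$, so $a_n \to b$, and $(a_n)$ is a sequence in $F$. By hypothesis, $b \in F$. Therefore $F' \subset F$, and by the limit-point characterization $F$ is closed.

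The argument is essentially mechanical; the only point requiring a little care is the construction in the converse, where one must select the terms $a_n$ from $B(b;1/n)\cap (F\setminus\{b\})$ — a set guaranteed nonempty only because $b$ is assumed to be a limit point of $F$, not merely a point of $F$. The constant sequence $(b,b,\dots)$ would satisfy the hypothesis vacuously but yields no information, which is precisely why the notion of limit point, rather than that of an isolated point, is the correct bridge between the two formulations. With that subtlety handled, both directions close immediately.
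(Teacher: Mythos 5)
Your proof is correct, and the converse direction is essentially the paper's: you build $a_n \in B(b;1/n)\cap(F\setminus\{b\})$ for a limit point $b$, conclude $b \in F$ from the sequential hypothesis, and then invoke the limit-point characterization ($F$ closed $\iff F' \subset F$). Where you genuinely diverge is in the forward direction. The paper stays inside the limit-point framework: it splits into cases according to whether the sequence contains infinitely many distinct terms (in which case $a$ is a limit point of $F$, hence in $F$ by the previous theorem) or is eventually constant equal to $a$ (in which case $a \in F$ trivially). You instead argue directly from the definition of closedness: if $a \notin F$, openness of $C(F)$ yields a ball $B(a;\delta)$ disjoint from $F$, which convergence immediately contradicts. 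Your route is shorter and avoids the case analysis entirely, in particular sidestepping the slightly delicate point that a convergent sequence with only finitely many distinct values must eventually equal its limit; the paper's route has the virtue of uniformity, deriving everything from the single limit-point theorem it has just proved. Your closing remark about why the terms must be drawn from $F\setminus\{b\}$ (and why a constant sequence gives nothing) is exactly the right subtlety to flag, and matches the paper's earlier discussion of isolated points.
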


\begin{proof}
\textbf{($\Rightarrow$)} Assume \( F \) is closed, and let \( (a_n) \subset F \) with \( \lim_{n \to \infty} a_n = a \).  
If infinitely many distinct \( a_n \)’s appear in the sequence, then every neighborhood of \( a \) contains infinitely many points of \( F \); hence \( a \) is a limit point of \( F \), and by the previous theorem \( a \in F \).  
If the sequence becomes constant after some index \( N \) (that is, \( a_n = a \) for all \( n > N \)), then trivially \( a \in F \).  
Therefore, in both cases, \( a \in F \).

\textbf{($\Leftarrow$)} Conversely, suppose that whenever a sequence \( (a_n) \subset F \) converges to \( a \in X \), we have \( a \in F \).  
Let \( b \) be a limit point of \( F \). Then by definition, there exists a sequence \( (a_n) \subset F \setminus \{b\} \) with \( a_n \to b \).  
By the given assumption, \( b \in F \).  
Thus \( F \) contains all its limit points, and hence \( F \) is closed.
\end{proof}

This sequential criterion is extremely useful in analysis, since many problems are more naturally expressed in terms of convergence rather than in terms of open or closed sets.

Finally, we can also describe closed sets in terms of the \emph{distance} from a point to a set — a viewpoint that connects topology with the metric structure itself.

\begin{theorem}[see \cite{Mendelson}]
A subset $F$ of a metric space $(X, d)$ is closed if and only if for every point $x \in X$, the condition $d(x, F) = 0$ implies $x \in F$.
\end{theorem}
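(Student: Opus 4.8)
The plan is to reduce both implications to the sequential characterization of closed sets proved just above, using the corollary that guarantees a sequence in $F$ whose distances to $x$ converge to $d(x,F)$. (If $F=\varnothing$ the statement is vacuous: $d(x,F)=0$ never holds and $\varnothing$ is closed, so we may assume $F\neq\varnothing$ and apply the distance-to-a-set machinery.)

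For the forward direction, I would assume $F$ is closed and take any $x\in X$ with $d(x,F)=0$. By the corollary on sequences approaching the greatest lower bound of $\{d(x,a):a\in F\}$, there is a sequence $(a_n)$ with each $a_n\in F$ and $d(x,a_n)\to d(x,F)=0$, i.e.\ $a_n\to x$. Since $F$ is closed, the sequential characterization forces $x\in F$, which is exactly what is required.

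For the converse, I would assume that $d(x,F)=0$ implies $x\in F$ for every $x$, and verify the sequential criterion for closedness. Let $(a_n)\subset F$ converge to some $a\in X$. For every $n$ we have $d(a,F)\le d(a,a_n)$ by definition of the infimum, and since $d(a,a_n)\to 0$ this gives $d(a,F)\le 0$; combined with $d(a,F)\ge 0$ we get $d(a,F)=0$. The hypothesis then yields $a\in F$, so by the sequential characterization $F$ is closed.

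Neither step presents a genuine obstacle; the only points needing care are the degenerate case $F=\varnothing$ (where $d(\cdot,F)$ is not defined by the paper's convention, so the equivalence must be checked directly and trivially) and the direction of the inequality $d(a,F)\le d(a,a_n)$ used to squeeze $d(a,F)$ to zero. If one preferred to avoid sequences entirely, an alternative is to argue via the limit-point characterization of closed sets, but the sequential route is shorter given what has already been established.
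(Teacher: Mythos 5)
Your proof is correct, and it follows the paper's overall strategy in the forward direction: both you and the paper invoke the corollary producing a sequence $(a_n)$ in $F$ with $d(x,a_n)\to d(x,F)=0$. The difference lies in which previously established characterization of closedness you then route through. The paper uses the limit-point characterization (Theorem on closed sets containing their limit points), which forces a small case split in the forward direction (either $x=a_n$ for some $n$, or $x$ is a limit point of $F$), and in the converse it argues directly that every limit point of $F$ satisfies $d(x,F)=0$. You instead use the sequential characterization of closed sets in both directions, which removes the case split entirely: in the forward direction $a_n\to x$ immediately gives $x\in F$, and in the converse the squeeze $0\le d(a,F)\le d(a,a_n)\to 0$ gives $d(a,F)=0$ and hence $a\in F$. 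Both routes rest on theorems proved in the same chapter, so neither is more economical in prerequisites; your version is slightly cleaner in execution, while the paper's version makes the connection between zero distance and limit points more explicit. Your handling of the degenerate case $F=\varnothing$ (where the paper's definition of $d(x,A)$ requires $A$ nonempty) is a careful touch the paper omits.
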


\begin{proof}
Assume first that $F$ is closed. Let $x \in X$ satisfy $d(x, F) = 0$. By Corollary 5.9, there exists a sequence $(a_n)$ of points in $F$ such that $\lim_{n \to \infty} d(x, a_n) = 0$. This means that every neighborhood of $x$ intersects $F$. If $x = a_n$ for some $n$, then $x \in F$. Otherwise, $x$ is a limit point of $F$ and by Theorem 6.7, $x \in F$.

Conversely, suppose that $d(x, F) = 0$ implies $x \in F$. If $x$ is a limit point of $F$, then clearly $d(x, F) = 0$, so $x \in F$. Therefore, $F$ contains all its limit points and is closed.
\end{proof}

\begin{theorem}[see \cite{Mendelson}]
Let $(X, d)$ and $(Y, d^*)$ be metric spaces. A function $f: X \to Y$ is continuous if and only if for every closed subset $A$ of $Y$, the preimage $f^{-1}(A)$ is closed in $X$.
\end{theorem}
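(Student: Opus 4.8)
The plan is to reduce this statement to the characterization of continuity via preimages of \emph{open} sets, which was established earlier in this section: $f$ is continuous on $X$ if and only if $f^{-1}(O)$ is open in $X$ for every open $O \subseteq Y$. The only extra ingredient needed is the elementary preimage–complement identity
\[
f^{-1}(Y \setminus A) = X \setminus f^{-1}(A), \qquad A \subseteq Y,
\]
which holds because $x \in f^{-1}(Y \setminus A)$ says precisely that $f(x) \notin A$, i.e. that $x \notin f^{-1}(A)$. Combined with the definition of a closed set as the complement of an open set, this identity lets us translate statements about closed sets into statements about open sets and back.

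For the forward implication I would assume $f$ is continuous and take any closed $A \subseteq Y$. Then $Y \setminus A$ is open, so $f^{-1}(Y \setminus A)$ is open in $X$ by the open-set criterion; but by the identity above that set equals $X \setminus f^{-1}(A)$, so $f^{-1}(A)$ has open complement and is therefore closed. For the converse I would assume $f^{-1}(A)$ is closed whenever $A$ is closed, take an arbitrary open $O \subseteq Y$, observe that $Y \setminus O$ is closed, conclude that $X \setminus f^{-1}(O) = f^{-1}(Y \setminus O)$ is closed, hence that $f^{-1}(O)$ is open, and then invoke the open-set criterion once more to deduce that $f$ is continuous.

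The argument is essentially bookkeeping, and I do not anticipate a genuine obstacle. The one point to keep straight is that complementation is a bijection between the open and the closed subsets of $Y$, so the quantifier ``for every closed $A$'' is truly equivalent to ``for every open $O$'' — there is no hidden gap beyond verifying the preimage–complement identity and applying the earlier theorem correctly in each direction.
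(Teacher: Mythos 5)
Your proposal is correct and follows essentially the same route as the paper: both reduce the statement to the open-set criterion for continuity via the identity $f^{-1}(Y \setminus A) = X \setminus f^{-1}(A)$. Your version simply spells out both directions more explicitly than the paper's brief argument.
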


\begin{proof}
For any $A \subset Y$, note that $C(f^{-1}(A)) = f^{-1}(C(A))$. Continuity of $f$ is equivalent to the property that the preimage of every open set is open. Applying this to complements, it follows that the preimage of each closed set is closed.
\end{proof}

\begin{theorem}[see \cite{Mendelson}]
Let $(X, d)$ be a metric space.

\begin{enumerate}
    \item[\( C_1 \):] $X$ is closed.
    \item[\( C_2 \):] $\emptyset$ is closed.
    \item[\( C_3 \):] The union of finitely many closed sets is closed.
    \item[\( C_4 \):] The intersection of any collection of closed sets is closed.
\end{enumerate}
\end{theorem}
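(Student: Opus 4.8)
The plan is to reduce everything to the already-established properties $O_1$ through $O_4$ of open sets by taking complements, so the only real tool needed is De Morgan's laws together with the definition of a closed set as one whose complement is open. First I would record the two De Morgan identities in the forms $C\bigl(\bigcup_{\alpha} F_\alpha\bigr) = \bigcap_{\alpha} C(F_\alpha)$ and $C\bigl(\bigcap_{\alpha} F_\alpha\bigr) = \bigcup_{\alpha} C(F_\alpha)$, valid for arbitrary index sets, and note that $C(C(A)) = A$.

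For $C_1$ and $C_2$, I would simply observe that $C(X) = \varnothing$ and $C(\varnothing) = X$; since $\varnothing$ and $X$ are open by $O_1$ and $O_2$, their complements $X$ and $\varnothing$ are closed by definition. For $C_3$, given closed sets $F_1, \dots, F_n$, I would write $C(F_1 \cup \cdots \cup F_n) = C(F_1) \cap \cdots \cap C(F_n)$; each $C(F_i)$ is open because $F_i$ is closed, and a finite intersection of open sets is open by $O_3$, so the complement of $F_1 \cup \cdots \cup F_n$ is open, i.e.\ the union is closed. For $C_4$, given an arbitrary family $\{F_\alpha\}_{\alpha \in I}$ of closed sets, I would write $C\bigl(\bigcap_{\alpha} F_\alpha\bigr) = \bigcup_{\alpha} C(F_\alpha)$; each $C(F_\alpha)$ is open, and an arbitrary union of open sets is open by $O_4$, hence $\bigcap_{\alpha} F_\alpha$ is closed.

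There is no genuine obstacle here: once the open-set theorem is in hand, the argument is purely formal. The only point requiring a moment's care is bookkeeping the finite-versus-arbitrary asymmetry — a finite union of closed sets dualizes to a finite intersection of open sets (which is why $C_3$ must be restricted to finitely many sets, matching the restriction in $O_3$), whereas an arbitrary intersection of closed sets dualizes to an arbitrary union of open sets (which $O_4$ permits without restriction). I would conclude by remarking that, as with the open sets, infinite unions of closed sets need not be closed, the standard counterexample being $\bigcup_{n \ge 1} [1/n, 1] = (0,1]$ in $\mathbb{R}$, which mirrors the failure of infinite intersections of open sets noted earlier.
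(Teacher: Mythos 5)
Your argument is correct and follows the same route as the paper: $C_1$ and $C_2$ come from the openness of $\varnothing$ and $X$, while $C_3$ and $C_4$ follow from De Morgan's laws applied to $O_3$ and $O_4$; you simply spell out the dualization in more detail than the paper's brief proof. The closing counterexample $\bigcup_{n \ge 1} [1/n, 1] = (0,1]$ also matches the example the paper gives immediately after the theorem.
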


\begin{proof}
The facts $C_1$ and $C_2$ are immediate. The statements $C_3$ and $C_4$ follow directly from DeMorgan's laws applied to the corresponding properties of open sets, $O_3$ and $O_4$.
\end{proof}

\begin{example}
The union of infinitely many closed sets need not be closed. For instance, consider $F_n = [1/n, 1]$ for each positive integer $n$. Then 
\[
\bigcup_{n=1}^{\infty} F_n = (0, 1],
\]
which is not closed because $0$ is a limit point of the union but is not contained in it.
\end{example}

\begin{theorem}[Normality]
Every pseudo-metric space is normal: any two disjoint closed subsets can be separated by disjoint open sets.
\end{theorem}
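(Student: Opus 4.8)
The plan is to mimic the classical argument that metric spaces are normal, using the distance-to-a-set function $D(A,\cdot)$ introduced earlier in the pseudo-metric section. Let $A$ and $B$ be disjoint closed subsets of a pseudo-metric space $(X,d)$; we may assume both are non-empty, since otherwise we take $U = X$ and $V = \varnothing$ (or the reverse). For $x \in X$ set $f(x) = D(A,x)$ and $g(x) = D(B,x)$.

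The first step is to record that $f$ and $g$ are continuous on $X$. From the triangle inequality one gets $|D(A,x) - D(A,y)| \le d(x,y)$, so $f$ and $g$ are $1$-Lipschitz, hence continuous; therefore $h = f - g$ is continuous. The sets
\[
U = \{x \in X : h(x) < 0\} = h^{-1}\big((-\infty,0)\big), \qquad
V = \{x \in X : h(x) > 0\} = h^{-1}\big((0,\infty)\big)
\]
are then open (preimages of open sets under a continuous map) and visibly disjoint.

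The second step is to verify the inclusions $A \subseteq U$ and $B \subseteq V$; this is the only place where closedness is used. If $a \in A$ then $f(a) = D(A,a) = 0$. Since $a \notin B$ and $B$ is closed, $X \setminus B$ is open, so there is $\varepsilon > 0$ with $B(a;\varepsilon) \subseteq X \setminus B$, i.e.\ $d(a,b) \ge \varepsilon$ for every $b \in B$, whence $g(a) = D(B,a) \ge \varepsilon > 0$. Thus $h(a) < 0$ and $a \in U$. The argument for $B \subseteq V$ is symmetric, and we are done.

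The step I expect to be the main (if modest) obstacle is the second one. In a pseudo-metric space the implication ``$D(B,a) = 0 \Rightarrow a \in B$'' can fail for arbitrary $B$, so one cannot invoke a metric-space theorem verbatim; instead one must extract positivity of the distance $D(B,a)$ directly from the openness of $X \setminus B$, as above. Everything else — the Lipschitz bound for the distance functions, and the openness and disjointness of $U$ and $V$ — is routine. One could alternatively give the ``union of small balls'' proof: for each $a \in A$ choose $\varepsilon_a > 0$ with $B(a;\varepsilon_a) \cap B = \varnothing$ and for each $b \in B$ choose $\delta_b > 0$ with $B(b;\delta_b) \cap A = \varnothing$, then set $U = \bigcup_{a \in A} B(a;\varepsilon_a/2)$ and $V = \bigcup_{b \in B} B(b;\delta_b/2)$ and use the triangle inequality to show $U \cap V = \varnothing$; this avoids continuity but relies on the same positivity observation.
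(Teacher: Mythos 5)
Your proof is correct. Note that the paper states this normality theorem without any proof at all, so there is nothing to compare against; your argument fills that gap with the standard Urysohn-style construction. The two continuous functions $f(x)=D(A,x)$ and $g(x)=D(B,x)$ are indeed $1$-Lipschitz by the triangle inequality (which holds in a pseudo-metric space), so $U=\{h<0\}$ and $V=\{h>0\}$ with $h=f-g$ are open and disjoint, and you correctly isolate the one point where the pseudo-metric setting demands care: one cannot argue ``$D(B,a)=0\Rightarrow a\in B$'' as a black-box metric fact, but must derive $D(B,a)\geq\varepsilon>0$ from the openness of $X\setminus B$, exactly as you do (and this derivation is unaffected by the possibility of distinct points at zero distance). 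Two minor remarks: the paper proves the ``preimage of open is open'' characterization of continuity only for metric spaces, so strictly speaking you should either note that its proof transfers verbatim to pseudo-metrics or bypass it by using the Lipschitz bound directly to put a small ball inside $U$ around any point with $h(x)<0$; and your alternative ``union of half-radius balls'' argument is equally valid and slightly more self-contained, since it needs no facts about continuous real-valued functions.
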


In this chapter we have explored the fundamental notions of open and closed sets in metric spaces. Open sets were introduced as neighborhoods of each of their points and were shown to be unions of open balls. Closed sets were characterized in multiple equivalent ways: via complements, limit points, sequences, and distance from points. We also saw how these concepts connect to continuity, with continuous functions preserving the openness or closedness of sets under preimages. 

Together, these ideas provide a foundation for topology, showing how local properties around points (neighborhoods, balls) lead to global properties of sets and functions, setting the stage for further study of connectedness, compactness, and more advanced topological concepts.

\newpage
\section{Subspaces and Equivalent Metrics}

Let $(X, d)$ be a metric space. Suppose $Y$ is a non-empty subset of $X$.  
We can naturally endow $Y$ with a metric by restricting the distance function $d$ to pairs of points in $Y$. In this way, each non-empty subset $Y \subseteq X$ inherits a metric space structure, denoted by $(Y, d|_{Y \times Y})$.  
Thus, every subset of a metric space may itself be viewed as a smaller metric space, where distances are measured exactly as in the original space.

Conversely, suppose we are given two metric spaces $(X, d)$ and $(Y, d')$ such that $Y \subseteq X$.  
It is natural to ask whether the metric $d'$ on $Y$ is simply the restriction of the ambient metric $d$ --- that is, whether the distances in $Y$ coincide with those inherited from $X$.

\begin{definition}[see \cite{Mendelson}]
Let $(X, d)$ and $(Y, d')$ be metric spaces.  
We say that $(Y, d')$ is a \textbf{subspace} of $(X, d)$ if:
\begin{enumerate}
    \item $Y \subseteq X$;
    \item $d' = d|_{Y \times Y}$.
\end{enumerate}
\end{definition}

Let $Y \subseteq X$ and denote by $i: Y \to X$ the inclusion mapping.  
Then $i \times i: Y \times Y \to X \times X$ is the corresponding inclusion on pairs, defined by $(i \times i)(y_1, y_2) = (y_1, y_2)$.  
In this context, we say that $(Y, d')$ is a subspace of $(X, d)$ precisely when the following relationship holds between the two metrics --- that is, when $d'$ is exactly the pullback of $d$ under the inclusion map.

\[
\begin{tikzcd}
Y \times Y \arrow[rr, "d'"] \arrow[d, "i \times i"'] & & \mathbb{R} \\
X \times X \arrow[rru, "d"']
\end{tikzcd}
\]

Every non-empty subset of a metric space naturally gives rise to its own subspace.  
Hence, the number of subspaces of a metric space $(X, d)$ is exactly the number of its non-empty subsets.  
Each of these subsets inherits the same notion of distance as in the original space.

\begin{example}[see \cite{Mendelson}]
Let $\mathbb{Q}$ denote the set of rational numbers, and define the metric
\[
d_{\mathbb{Q}}(a, b) = |a - b|, \quad \text{for all } a, b \in \mathbb{Q}.
\]
Then $(\mathbb{Q}, d_{\mathbb{Q}})$ is a subspace of the real line $(\mathbb{R}, d)$, where $d$ is the usual Euclidean metric.  
This example illustrates how the rationals, though dense in $\mathbb{R}$, form their own metric space with distances measured exactly as in $\mathbb{R}$.
\end{example}

\begin{example}[see \cite{Mendelson}]
Let $I^n$ denote the unit $n$-cube, that is,
\[
I^n = \{ (x_1, x_2, \ldots, x_n) \in \mathbb{R}^n : 0 \le x_i \le 1 \text{ for all } i = 1, 2, \ldots, n \}.
\]
Define a metric $d_I : I^n \times I^n \to \mathbb{R}$ by
\[
d_I(x, y) = \max\{ |x_i - y_i| : i = 1, \ldots, n \}.
\]
Then $(I^n, d_I)$ is a subspace of $(\mathbb{R}^n, d)$, where $d$ denotes the same maximum (or uniform) metric on $\mathbb{R}^n$.  
Geometrically, this means that distances between points in the cube are measured exactly as in the ambient space, but only within the cube’s boundaries.
\end{example}

\begin{example}[see \cite{Mendelson}]
Let $S^n$ denote the $n$-sphere, defined as the set
\[
S^n = \{ (x_1, x_2, \ldots, x_{n+1}) \in \mathbb{R}^{n+1} : x_1^2 + x_2^2 + \cdots + x_{n+1}^2 = 1 \}.
\]
Define $d_S : S^n \times S^n \to \mathbb{R}$ by
\[
d_S((x_1, \ldots, x_{n+1}), (y_1, \ldots, y_{n+1})) 
= \sqrt{\sum_{i=1}^{n+1} (x_i - y_i)^2}.
\]
Then $(S^n, d_S)$ is a subspace of the Euclidean space $(\mathbb{R}^{n+1}, d)$.  
Here the metric $d_S$ coincides with the Euclidean metric restricted to the sphere, showing that the sphere inherits its geometry from the ambient space.
\end{example}

\begin{example}[see \cite{Mendelson}]
Let $A$ be the set of all $(n + 1)$-tuples of real numbers satisfying $x_{n+1} = 0$, that is,
\[
A = \{ (x_1, x_2, \ldots, x_{n+1}) \in \mathbb{R}^{n+1} : x_{n+1} = 0 \}.
\]
Define $d_A : A \times A \to \mathbb{R}$ by
\[
d_A((x_1, \ldots, x_n, 0), (y_1, \ldots, y_n, 0)) = \max\{ |x_i - y_i| : i = 1, \ldots, n \}.
\]
Then $(A, d_A)$ is a subspace of $(\mathbb{R}^{n+1}, d)$, where $d$ again denotes the maximum metric.  
Geometrically, $A$ can be viewed as an $n$-dimensional hyperplane sitting inside $\mathbb{R}^{n+1}$, inheriting its distances directly from the surrounding space.
\end{example}

\begin{theorem}[see \cite{Mendelson}]
Let $(Y, d')$ be a subspace of a metric space $(X, d)$. Then the inclusion mapping $i : Y \rightarrow X$ is continuous.
\end{theorem}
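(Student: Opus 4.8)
The plan is to verify continuity directly from the $\varepsilon$–$\delta$ definition of continuity between metric spaces, exploiting the defining property of a subspace that the metric $d'$ on $Y$ is literally the restriction of the metric $d$ on $X$. The single observation doing all the work is the identity $d'(y_1,y_2)=d(y_1,y_2)=d\big(i(y_1),i(y_2)\big)$ for all $y_1,y_2\in Y$, which holds because $d'=d|_{Y\times Y}$ and $i$ is the inclusion, so $i(y_j)=y_j$.

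Concretely, I would fix an arbitrary point $a\in Y$ and an arbitrary $\varepsilon>0$, and then simply take $\delta=\varepsilon$. For any $x\in Y$ with $d'(x,a)<\delta$, the restriction identity gives
\[
d\big(i(x),i(a)\big)=d(x,a)=d'(x,a)<\delta=\varepsilon,
\]
which is exactly the condition required by the definition of continuity at $a$. Since $a\in Y$ was arbitrary, this shows that $i$ is continuous on all of $Y$.

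There is essentially no obstacle here; the only point worth flagging is that one must measure closeness in the domain using the \emph{subspace} metric $d'$ rather than some unrelated metric on $Y$, and this is precisely what condition~(2) in the definition of a subspace guarantees. If a more topological argument were desired, one could instead note that for each open ball $B(i(a);\varepsilon)$ in $X$ one has $i^{-1}\big(B(i(a);\varepsilon)\big)=B(i(a);\varepsilon)\cap Y$, which is an open ball in $(Y,d')$ and hence open in $Y$, and then invoke the preimage characterization of continuity; but the $\varepsilon$–$\delta$ version above is the shortest route and is the one I would present.
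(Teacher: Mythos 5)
Your proof is correct and follows essentially the same route as the paper: the identity $d'(x,a)=d(i(x),i(a))$ coming from $d'=d|_{Y\times Y}$, together with the choice $\delta=\varepsilon$, is exactly the paper's argument. Nothing to add.
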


\begin{proof}
Let $a \in Y$ and $\varepsilon > 0$.  
Choose $\delta = \varepsilon$.  
If $d'(a, y) < \delta$ for some $y \in Y$, then by the definition of a subspace metric,
\[
d(i(a), i(y)) = d(a, y) = d'(a, y) < \delta = \varepsilon.
\]
Hence, $i$ is continuous at every point $a \in Y$, and therefore $i$ is continuous on $Y$.
\end{proof}

The metric space $(A, d_A)$ introduced in Example 4 is, in most respects, a copy of the metric space $(\mathbb{R}^n, d)$.  
The only distinction is notational: a point of $\mathbb{R}^n$ is an $n$-tuple of real numbers, while a point of $A$ is an $(n + 1)$-tuple whose last coordinate is zero.  
The correspondence between these two spaces exemplifies the concept of \emph{metric equivalence} (or \emph{isometry}) between metric spaces.

\begin{definition}[see \cite{Mendelson}]
Two metric spaces $(A, d_A)$ and $(B, d_B)$ are said to be \textbf{metrically equivalent} or \textbf{isometric} if there exist mutually inverse functions 
\[
f: A \to B \quad \text{and} \quad g: B \to A
\]
such that:
\[
d_B(f(x), f(y)) = d_A(x, y) \quad \text{for all } x, y \in A,
\]
and
\[
d_A(g(u), g(v)) = d_B(u, v) \quad \text{for all } u, v \in B.
\]
In this case, we say that the \textbf{isometry} (or metric equivalence) is defined by the pair of mappings $f$ and $g$.
\end{definition}

\begin{theorem}[see \cite{Mendelson}]
A necessary and sufficient condition for two metric spaces $(A, d_A)$ and $(B, d_B)$ to be metrically equivalent is the existence of a function $f: A \to B$ such that:
\begin{enumerate}
    \item $f$ is one-to-one;
    \item $f$ is onto;
    \item for all $x, y \in A$, we have $d_B(f(x), f(y)) = d_A(x, y)$.
\end{enumerate}
\end{theorem}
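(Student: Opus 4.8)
The plan is to prove the two implications separately, and in each case to reduce the statement directly to the definition of metric equivalence given just above. For the necessity direction, suppose $(A,d_A)$ and $(B,d_B)$ are metrically equivalent. By definition there exist mutually inverse maps $f\colon A\to B$ and $g\colon B\to A$ satisfying $d_B(f(x),f(y))=d_A(x,y)$ and $d_A(g(u),g(v))=d_B(u,v)$. Since $f$ has a two-sided inverse, it is automatically both one-to-one and onto, so conditions (1) and (2) hold; condition (3) is literally one of the two distance hypotheses in the definition. Hence a function $f$ with the three stated properties exists.

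For the sufficiency direction I would start from a map $f\colon A\to B$ satisfying (1)--(3) and construct the pair of mutually inverse, distance-preserving maps required by the definition. Because $f$ is one-to-one and onto, it admits a two-sided set-theoretic inverse $g:=f^{-1}\colon B\to A$, which is by construction mutually inverse to $f$. The only remaining point is to verify that $g$ is itself distance-preserving, i.e.\ that $d_A(g(u),g(v))=d_B(u,v)$ for all $u,v\in B$. Given such $u,v$, surjectivity of $f$ lets me write $u=f(x)$ and $v=f(y)$ for some $x,y\in A$ (unique by injectivity), so that $g(u)=x$ and $g(v)=y$, and then hypothesis (3) gives $d_A(g(u),g(v))=d_A(x,y)=d_B(f(x),f(y))=d_B(u,v)$. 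Thus $f$ and $g$ witness the metric equivalence of $(A,d_A)$ and $(B,d_B)$, as required.

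There is no genuine obstacle here; the argument is a short bookkeeping exercise about bijections, and the one substantive step is simply checking that the inverse of a distance-preserving bijection is again distance-preserving. The only subtlety worth flagging is that condition (1) is in fact redundant in the presence of (3): if $f(x)=f(y)$ then $d_A(x,y)=d_B(f(x),f(y))=0$, whence $x=y$ by the second metric axiom. I may mention this as a remark, but it is not needed for the proof itself.
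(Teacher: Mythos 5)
Your proof is correct and follows essentially the same route as the paper: necessity is read off directly from the definition of metric equivalence, and sufficiency is obtained by taking $g=f^{-1}$ and checking it preserves distances via $u=f(x)$, $v=f(y)$. Your added remark that injectivity is redundant given (3) and the axiom $d_A(x,y)=0\iff x=y$ is a nice observation not present in the paper, but the core argument is identical.
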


\begin{proof}
The necessity of these conditions is clear, since if $f: A \to B$ and $g: B \to A$ define a metric equivalence, then $f$ must be both one-to-one and onto, and the distance-preserving property holds by definition.

Conversely, suppose there exists a mapping $f: A \to B$ satisfying the three stated properties.  
Then $f$ is invertible, and its inverse $g: B \to A$ is defined by $g(b) = a$ whenever $f(a) = b$.  
For any $u, v \in B$, set $x = g(u)$ and $y = g(v)$. Then:
\[
d_A(g(u), g(v)) = d_A(x, y) = d_B(f(x), f(y)) = d_B(u, v).
\]
Hence, $f$ and $g$ define a metric equivalence between $(A, d_A)$ and $(B, d_B)$.
\end{proof}

Given metric spaces $(A, d_A)$ and $(B, d_B)$ together with functions $f: A \to B$ and $g: B \to A$,  
we define the product mappings
\[
f \times f : A \times A \to B \times B, \quad (f \times f)(x, y) = (f(x), f(y))
\]
for all $x, y \in A$, and similarly,
\[
g \times g : B \times B \to A \times A, \quad (g \times g)(u, v) = (g(u), g(v))
\]
for all $u, v \in B$.

The condition that
\[
d_B(f(x), f(y)) = d_A(x, y), \quad \text{for all } x, y \in A,
\]
is equivalent to saying that the following diagram is commutative:

\[
\begin{tikzcd}
A \times A \arrow[rr, "d_A"] \arrow[d, "f \times f"'] & & \mathbb{R} \\
B \times B \arrow[rru, "d_B"']
\end{tikzcd}
\]

In other words, the function $f : A \to B$ is \emph{distance-preserving}.  
From a diagrammatic point of view, the statement that $(A, d_A)$ and $(B, d_B)$ are metrically equivalent means that there exist functions 
\[
f : A \to B, \qquad g : B \to A,
\]
such that the following four diagrams commute:

\[
\begin{array}{cc}
\begin{tikzcd}
A \arrow[rr, "i_A"] \arrow[d, "f"'] & & A \\
B \arrow[rru, "g"']
\end{tikzcd}
&
\begin{tikzcd}
B \arrow[rr, "i_B"] \arrow[d, "g"'] & & B \\
A \arrow[rru, "f"']
\end{tikzcd}
\\[2em]
\begin{tikzcd}
A \times A \arrow[rr, "d_A"] \arrow[d, "f \times f"'] & & \mathbb{R} \\
B \times B \arrow[rru, "d_B"']
\end{tikzcd}
&
\begin{tikzcd}
B \times B \arrow[rr, "d_B"] \arrow[d, "g \times g"'] & & \mathbb{R} \\
A \times A \arrow[rru, "d_A"']
\end{tikzcd}
\end{array}
\]

Here, $i_A : A \to A$ and $i_B : B \to B$ denote the identity mappings.  
The first two diagrams express that $f$ and $g$ are inverse functions, while the last two express that both $f$ and $g$ preserve distances.  
Since the distance between $x$ and $y$ in $A$ equals the distance between $f(x)$ and $f(y)$ in $B$, it follows that $f$ is continuous.  
Similarly, $g$ is continuous.  

\begin{lemma}[see \cite{Mendelson}]
Let a metric equivalence between $(A, d_A)$ and $(B, d_B)$ be defined by inverse functions $f: A \to B$ and $g: B \to A$.  
Then both $f$ and $g$ are continuous.
\end{lemma}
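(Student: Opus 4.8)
The plan is to apply the $\varepsilon$–$\delta$ definition of continuity in metric spaces directly, exploiting the fact that a distance-preserving map permits the choice $\delta = \varepsilon$. The whole proof amounts to two identical one-line estimates, one for $f$ and one for $g$.

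First I would fix an arbitrary point $a \in A$ and an arbitrary $\varepsilon > 0$, and set $\delta = \varepsilon$. Then for any $y \in A$ with $d_A(y, a) < \delta$, the defining identity of the metric equivalence, $d_B(f(y), f(a)) = d_A(y, a)$, immediately gives $d_B(f(y), f(a)) < \varepsilon$. By the definition of continuity for functions between metric spaces, this shows $f$ is continuous at $a$; since $a \in A$ was arbitrary, $f$ is continuous on $A$.

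Next I would repeat the identical argument for $g$, using the companion identity $d_A(g(u), g(v)) = d_B(u, v)$, which is part of the definition of a metric equivalence. (Alternatively, one may observe that $g$ is itself a distance-preserving bijection by the characterization theorem proved just above, so the same one-line estimate applies verbatim with the roles of $A$ and $B$ interchanged.) This completes the proof.

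The argument uses nothing beyond the metric axioms and the $\varepsilon$–$\delta$ definition of continuity — no completeness, separability, or finite-dimensionality — so the lemma in fact holds for arbitrary metric (indeed pseudometric) spaces. The ``hard part'' here is essentially vacuous: the only point requiring any care is to recognise that distance preservation turns the usual continuity inequality into an equality, so that $\delta = \varepsilon$ works with no further estimation; there is no genuine obstacle to overcome.
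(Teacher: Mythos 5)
Your proof is correct and follows essentially the same route as the paper, which justifies the lemma by noting that distance preservation immediately gives continuity (the paper merely asserts this in the text preceding the lemma, while you spell out the $\delta=\varepsilon$ argument explicitly for both $f$ and $g$).
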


From the perspective of studying continuity, the notion of \emph{metric equivalence} is somewhat restrictive.  
Indeed, requiring that two functions preserve distances exactly is often stronger than necessary when the goal is to understand continuity and open sets.  
We are therefore led to introduce a broader concept—one that retains the idea of correspondence between points but drops the condition of exact distance preservation.  
In other words, we no longer require the commutativity of the last pair of diagrams introduced earlier, but only that the first two diagrams commute and that the corresponding functions are continuous.

\begin{definition}[see \cite{Mendelson}]
Two metric spaces $(A, d_A)$ and $(B, d_B)$ are said to be \textbf{topologically equivalent} if there exist inverse functions 
$f : A \to B$ and $g : B \to A$ such that both $f$ and $g$ are continuous.  
In this case, the pair $(f, g)$ is said to define the \textbf{topological equivalence} between $(A, d_A)$ and $(B, d_B)$.
\end{definition}

As a direct consequence of Lemma~7.5, we obtain the following result.

\begin{corollary}[see \cite{Mendelson}]
Two metric spaces that are metrically equivalent are also topologically equivalent.
\end{corollary}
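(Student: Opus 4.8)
The plan is to simply unwind the two definitions in play and quote Lemma~7.5. By hypothesis $(A, d_A)$ and $(B, d_B)$ are metrically equivalent, so by the defining property of metric equivalence there exist mutually inverse functions $f : A \to B$ and $g : B \to A$ with $d_B(f(x), f(y)) = d_A(x, y)$ for all $x, y \in A$ and $d_A(g(u), g(v)) = d_B(u, v)$ for all $u, v \in B$. The first step is therefore to observe that this very pair $(f, g)$ is precisely a metric equivalence in the sense required by the hypothesis of Lemma~7.5.

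The second step is to invoke Lemma~7.5 directly: it asserts that both $f$ and $g$ are continuous. At that point we have a pair of mutually inverse continuous functions between the two spaces, which is exactly what the definition of topological equivalence requires; so $(f, g)$ defines a topological equivalence and we are done.

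There is essentially no obstacle: the result is immediate, and the only thing to check is the (transparent) fact that the inverse pair delivered by metric equivalence coincides with the inverse pair demanded by the definition of topological equivalence. If one preferred a self-contained argument not citing Lemma~7.5, the single point to verify would be the continuity of a distance-preserving bijection, which follows from the $\varepsilon$--$\delta$ definition by taking $\delta = \varepsilon$: then $d_A(a, y) < \delta$ forces $d_B(f(a), f(y)) = d_A(a, y) < \varepsilon$, and symmetrically for $g$. That routine check is the extent of the computation involved.
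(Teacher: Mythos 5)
Your argument is correct and matches the paper exactly: the corollary is stated there as an immediate consequence of Lemma~7.5, which guarantees that the mutually inverse distance-preserving maps $f$ and $g$ are continuous, and your optional self-contained $\delta=\varepsilon$ check is precisely the reasoning the paper uses to justify that lemma.
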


The converse of this statement is, however, false.  
That is, there exist metric spaces that are topologically equivalent but not metrically equivalent.  
A simple example can be found in Euclidean geometry:  
a circle of radius \(1\) and a circle of radius \(2\) (considered as subspaces of $(\mathbb{R}^2, d)$) are topologically equivalent—since there exists a continuous bijection with a continuous inverse between them—but they are not metrically equivalent, as the distances between corresponding points are not preserved.

To establish a sufficient condition for topological equivalence between metric spaces that share the same underlying set, we consider the following lemma.

\begin{lemma}[see \cite{Mendelson}]
Let $(X, d_1)$ and $(X, d_2)$ be two metric spaces.  
If there exists a constant \( K > 0 \) such that for all \( x, y \in X \),
\[
d_2(x, y) \leq K \cdot d_1(x, y),
\]
then the identity mapping \( i : (X, d_1) \to (X, d_2) \) is continuous.
\end{lemma}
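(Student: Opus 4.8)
The plan is to verify the $\varepsilon$–$\delta$ criterion of Definition 5.4 directly, using the estimate $d_2(x,y)\le K\,d_1(x,y)$ to convert a prescribed tolerance in the $d_2$-target into a matching tolerance in the $d_1$-source. Concretely, fix an arbitrary point $a\in X$ and an arbitrary $\varepsilon>0$; since we want $d_2(i(x),i(a))=d_2(x,a)<\varepsilon$, it suffices to force $K\,d_1(x,a)<\varepsilon$, which we arrange by choosing $\delta=\varepsilon/K$ (this is legitimate because $K>0$). The whole argument is a two-line chain of inequalities, so there is essentially no obstacle; the only point requiring a moment's care is that $K$ is strictly positive, so that dividing by $K$ is valid and $\delta$ is a genuine positive real number.

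First I would open by fixing $a\in X$ and $\varepsilon>0$, and setting $\delta:=\varepsilon/K>0$. Next I would take any $x\in X$ with $d_1(x,a)<\delta$ and compute
\[
d_2\bigl(i(x),i(a)\bigr)=d_2(x,a)\le K\,d_1(x,a)<K\cdot\frac{\varepsilon}{K}=\varepsilon,
\]
which is precisely the implication required by Definition 5.4 for continuity of $i$ at $a$. Since $a\in X$ was arbitrary, this shows that $i:(X,d_1)\to(X,d_2)$ is continuous at every point, hence continuous on all of $X$. If desired, one could phrase the conclusion in terms of open balls, noting that the displayed estimate says exactly $i\bigl(B_{d_1}(a;\delta)\bigr)\subseteq B_{d_2}(a;\varepsilon)$, which matches the open-ball characterization of continuity established earlier.

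Finally, it is worth remarking (though this goes slightly beyond the bare statement) that under the stronger hypothesis of a two-sided comparison $\tfrac{1}{K}d_1\le d_2\le K\,d_1$ the same argument applied in both directions shows that $i$ and its inverse are both continuous, so $d_1$ and $d_2$ would then be topologically equivalent in the sense of Definition 7.6; but for the lemma as stated only the one-sided inequality and the continuity of the single map $i$ are needed, and the proof above suffices.
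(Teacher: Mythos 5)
Your proof is correct and follows essentially the same route as the paper: fix $a$ and $\varepsilon>0$, set $\delta=\varepsilon/K$, and use the inequality $d_2(x,a)\le K\,d_1(x,a)<K\delta=\varepsilon$ to verify the $\varepsilon$--$\delta$ criterion at every point. The additional remarks about the open-ball formulation and the two-sided comparison are accurate but not needed for the lemma itself.
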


\begin{proof}
Fix \( a \in X \) and let \( \varepsilon > 0 \).  
Set \( \delta = \varepsilon / K \).  
If \( d_1(x, a) < \delta \), then
\[
d_2(x, a) \leq K \cdot d_1(x, a) < K \delta = \varepsilon.
\]
Hence, the identity map is continuous at every point \( a \in X \).
\end{proof}

\begin{corollary}[see \cite{Mendelson}]
Let $(X, d)$ and $(X, d')$ be two metric spaces defined on the same set \( X \).  
If there exist positive constants \( K \) and \( K' \) such that for all \( x, y \in X \),
\[
d'(x, y) \leq K \cdot d(x, y), \qquad 
d(x, y) \leq K' \cdot d'(x, y),
\]
then the identity mappings define a topological equivalence between $(X, d)$ and $(X, d')$.
\end{corollary}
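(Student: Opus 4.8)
The plan is to deduce the corollary immediately from the preceding lemma, applied in both directions, together with the definition of topological equivalence; essentially no new ideas are required beyond correct bookkeeping.

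First I would observe that each of the two hypothesized inequalities has exactly the shape demanded by the lemma above. Applying that lemma with the roles $d_1 = d$, $d_2 = d'$ and constant $K$ to the inequality $d'(x, y) \le K \, d(x, y)$, we conclude that the identity mapping $i : (X, d) \to (X, d')$ is continuous. Symmetrically, applying the same lemma with $d_1 = d'$, $d_2 = d$ and constant $K'$ to the inequality $d(x, y) \le K' \, d'(x, y)$ shows that the identity mapping $j : (X, d') \to (X, d)$ is continuous as well.

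Next I would record that $i$ and $j$ are, as set-theoretic maps on $X$, both equal to the identity function, so in particular $j \circ i = i_X$ and $i \circ j = i_X$, where $i_X$ denotes the identity map of $X$. Hence $i$ and $j$ are mutually inverse functions, and since both are continuous, the pair $(i, j)$ satisfies precisely the conditions in the definition of topological equivalence between $(X, d)$ and $(X, d')$. This finishes the proof.

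I do not expect any genuine obstacle here: the only points requiring care are assigning the roles of $K$, $K'$ and the two metrics correctly when invoking the lemma, and noting explicitly that the two continuous identity maps are honest inverses of one another (which is immediate). If desired, one could add the remark that this in particular shows $d$ and $d'$ generate the same topology on $X$, linking back to the earlier notion of equivalent metrics, but that observation is not needed for the stated conclusion.
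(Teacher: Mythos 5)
Your proposal is correct and is exactly the argument the paper intends: the corollary follows by applying the preceding lemma in both directions (with constants $K$ and $K'$) to obtain continuity of both identity maps, which are mutually inverse, so the definition of topological equivalence is satisfied. No gaps, and no meaningful difference from the paper's route.
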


As a concrete example, consider the two metric spaces $(\mathbb{R}^n, d)$ and $(\mathbb{R}^n, d')$,  
where \( d \) denotes the \emph{maximum metric}
\[
d(x, y) = \max_{1 \leq i \leq n} |x_i - y_i|,
\]
and \( d' \) denotes the \emph{Euclidean metric}
\[
d'(x, y) = \sqrt{\sum_{i=1}^n (x_i - y_i)^2}.
\]
For every pair of points \( x, y \in \mathbb{R}^n \), the inequalities
\[
d(x, y) \leq d'(x, y) \leq \sqrt{n}\, d(x, y)
\]
hold.  
Thus, by Corollary~7.9, the metric spaces $(\mathbb{R}^n, d)$ and $(\mathbb{R}^n, d')$ are \emph{topologically equivalent}.

\begin{theorem}[see \cite{Mendelson}]
Let $(X, d)$ and $(Y, d')$ be two metric spaces, and let $f : X \to Y$ and $g : Y \to X$ be inverse functions.  
Then the following four statements are equivalent:
\begin{enumerate}
    \item The functions $f$ and $g$ are continuous;
    \item A subset $O \subseteq X$ is open if and only if $f(O)$ is an open subset of $Y$;
    \item A subset $F \subseteq X$ is closed if and only if $f(F)$ is a closed subset of $Y$;
    \item For each $a \in X$ and subset $N \subseteq X$, $N$ is a neighborhood of $a$ if and only if $f(N)$ is a neighborhood of $f(a)$.
\end{enumerate}
\end{theorem}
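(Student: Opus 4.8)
The plan is to establish a web of equivalences: $(1)\Leftrightarrow(2)$, $(2)\Leftrightarrow(3)$, and $(1)\Leftrightarrow(4)$. Each reduces to translating statements about images under $f$ into statements about preimages, so that the characterizations of continuity already proved — via preimages of open sets, of closed sets, and of neighborhoods — can be applied directly. The observation making this work is that, since $f$ and $g$ are mutually inverse bijections, for every subset $S$ one has $f(S)=g^{-1}(S)$ and $g(S)=f^{-1}(S)$; thus a claim about the image of a set under $f$ is literally a claim about its preimage under $g$, and conversely.

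For $(1)\Leftrightarrow(2)$: by the open-set criterion for continuity, $g$ is continuous if and only if $g^{-1}(O)$ is open in $Y$ for every open $O\subseteq X$, i.e.\ if and only if $f(O)$ is open whenever $O$ is open; this is the forward implication in $(2)$. Dually, $f$ is continuous if and only if $f^{-1}(V)=g(V)$ is open in $X$ for every open $V\subseteq Y$; applying this with $V=f(O)$ shows $O=g(f(O))$ is open whenever $f(O)$ is open, the reverse implication. Hence $(1)$ yields $(2)$, and conversely the two halves of $(2)$ supply exactly the preimage conditions that force $f$ and $g$ to be continuous.

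For $(2)\Leftrightarrow(3)$: since $f$ is a bijection, $f(X\setminus F)=Y\setminus f(F)$ for every $F\subseteq X$, so $f(F)$ is closed in $Y$ precisely when $f(X\setminus F)$ is open in $Y$; under $(2)$ this happens precisely when $X\setminus F$ is open in $X$, i.e.\ when $F$ is closed. Running the same complementation backward gives $(3)\Rightarrow(2)$. (Alternatively one could route through $(1)$ with the closed-set criterion, but complementation is shorter.) For $(1)\Leftrightarrow(4)$ I would invoke the neighborhood characterization of continuity: continuity of $g$ at $f(a)$ says that for every neighborhood $N$ of $a=g(f(a))$ the set $g^{-1}(N)=f(N)$ is a neighborhood of $f(a)$, which is the forward direction of $(4)$; continuity of $f$ at $a$ says that for every neighborhood $M$ of $f(a)$ the set $f^{-1}(M)=g(M)$ is a neighborhood of $a$, and taking $M=f(N)$ yields the reverse direction. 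Conversely, $(4)$ read in its two directions is exactly the neighborhood criterion applied to $g$ and to $f$, so $(4)\Rightarrow(1)$.

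The difficulty here is organizational rather than conceptual. The point that most invites error is that each biconditional in $(2)$, $(3)$, $(4)$ is two-sided, so in every case one implication must come from the continuity of $f$ and the other from the continuity of $g$; it is easy to establish only half a statement and believe one is done. A minor technical point in $(4)$: a neighborhood $N$ of $a$ contains $a$ by property $N_2$, so $f(a)\in f(N)$, and injectivity of $f$ gives $a\in N$ whenever $f(a)\in f(N)$ — both facts are needed for the equivalence to make sense.
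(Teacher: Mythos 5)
Your proposal is correct and takes essentially the same approach as the paper: the key identity $f(S)=g^{-1}(S)$ (and $g(S)=f^{-1}(S)$), the open-set and neighborhood characterizations of continuity, and complementation to pass between the open-set and closed-set statements. The only difference is organizational — you prove $(1)\Leftrightarrow(2)$, $(2)\Leftrightarrow(3)$ and $(1)\Leftrightarrow(4)$ with $(1)$ as the hub, while the paper closes the cycle $(1)\Rightarrow(2)\Rightarrow(4)\Rightarrow(1)$ and gets $(3)$ from $(2)$ by complements; your write-up also makes both directions of the biconditional in $(4)$ explicit, which the paper's $(2)\Rightarrow(4)$ step leaves partly implicit.
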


\begin{proof}
\textbf{$(1) \Rightarrow (2)$.}  
Suppose $O$ is an open subset of $X$.  
Then $f(O) = g^{-1}(O)$ is open in $Y$, since $g$ is continuous.  
Conversely, if $f(O)$ is open in $Y$, then $O = f^{-1}(f(O))$ is open in $X$, since $f$ is continuous.

\medskip

\textbf{$(2) \Rightarrow (4)$.}  
Let $a \in X$ and $N \subseteq X$.  
Then $N$ is a neighborhood of $a$ if and only if $N$ contains an open set $O$ such that $a \in O$.  
By $(2)$, $f(O)$ is an open subset of $Y$ containing $f(a)$, and $f(O) \subseteq f(N)$.  
Hence, $f(N)$ is a neighborhood of $f(a)$ in $Y$.

\medskip

\textbf{$(4) \Rightarrow (1)$.}  
Let $a \in X$ and let $U$ be a neighborhood of $f(a)$ in $Y$.  
Then, by $(4)$, $f^{-1}(U)$ is a neighborhood of $a$ in $X$.  
Thus, $f$ is continuous.  
Similarly, for any $b \in Y$ and neighborhood $V$ of $g(b)$, we have $g^{-1}(V) = f(V)$, which is a neighborhood of $f(g(b)) = b$; hence, $g$ is continuous.

\medskip

Therefore, statements $(1)$, $(2)$, and $(4)$ are equivalent.  
Finally, the equivalence between $(2)$ and $(3)$ follows immediately from the fact that a set is closed if and only if its complement is open.
\end{proof}

Statement $(1)$ in Theorem~7.10 asserts precisely that the metric spaces $(X, d)$ and $(Y, d')$ are \emph{topologically equivalent}.  
Consequently, Theorem~7.10 shows that two metric spaces are topologically equivalent if and only if there exist inverse functions that establish any one of the following:
\begin{itemize}
    \item a one-to-one correspondence between their open sets;
    \item a one-to-one correspondence between their closed sets; or
    \item a one-to-one correspondence between their complete systems of neighborhoods.
\end{itemize}

Both \emph{metric equivalence} and \emph{topological equivalence} define equivalence relations on the collection of all metric spaces.  
By Corollary~7.7, each equivalence class of metrically equivalent metric spaces is contained within an equivalence class of topologically equivalent ones.  
Determining to which topological equivalence class a metric space belongs is therefore a coarser—but also more fundamental—classification.  
According to Theorem~7.10, this classification is determined entirely by the family of open sets of the space, that is, by its \emph{topology}.

\newpage
\section{An Infinite-Dimensional Euclidean Space}

So far, we have studied finite-dimensional Euclidean spaces $\mathbf{R}^m$, consisting of all $m$-tuples
\[
\langle a_1, a_2, \ldots, a_m \rangle
\]
of real numbers. On this set, we define the function
\[
d(p, q) = \sqrt{(a_1 - b_1)^2 + \cdots + (a_m - b_m)^2} = \sqrt{\sum_{i=1}^m (a_i - b_i)^2},
\]
where $p = \langle a_1, \ldots, a_m \rangle$ and $q = \langle b_1, \ldots, b_m \rangle$. This function satisfies all the axioms of a metric and is called the \textbf{Euclidean metric} on $\mathbf{R}^m$. Unless otherwise stated, we always assume this metric on $\mathbf{R}^m$. The metric space $\mathbf{R}^m$ with the Euclidean metric is called the \textbf{Euclidean $m$-space}, also denoted by $E^m$.

\begin{theorem}
The Euclidean $m$-space $E^m$ is a metric space.
\end{theorem}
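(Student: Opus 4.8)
The plan is to verify that the Euclidean distance
\[
d(p,q) = \sqrt{\sum_{i=1}^m (a_i-b_i)^2}, \qquad p=\langle a_1,\dots,a_m\rangle,\ q=\langle b_1,\dots,b_m\rangle,
\]
satisfies the four metric axioms stated in Section~2, so that $(\mathbf{R}^m,d)$, i.e. $E^m$, is a metric space by definition. The first three axioms cost essentially nothing: $d$ is a principal (hence non-negative) square root of a sum of squares, so $d(p,q)\ge 0$; if $d(p,q)=0$ then every term $(a_i-b_i)^2$ vanishes, so $a_i=b_i$ for all $i$ and $p=q$, while $d(p,p)=0$ is immediate; and symmetry follows from $(a_i-b_i)^2=(b_i-a_i)^2$. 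Thus the entire content of the theorem is the triangle inequality.

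To prove $d(p,r)\le d(p,q)+d(q,r)$ for $r=\langle c_1,\dots,c_m\rangle$, I would set $x_i=a_i-b_i$ and $y_i=b_i-c_i$, so that $a_i-c_i=x_i+y_i$, reducing the claim to the Minkowski inequality
\[
\sqrt{\sum_{i=1}^m (x_i+y_i)^2}\ \le\ \sqrt{\sum_{i=1}^m x_i^2}+\sqrt{\sum_{i=1}^m y_i^2}.
\]
The key intermediate step — and the only genuine obstacle — is the Cauchy--Schwarz inequality
\[
\left|\sum_{i=1}^m x_i y_i\right|\ \le\ \left(\sum_{i=1}^m x_i^2\right)^{1/2}\left(\sum_{i=1}^m y_i^2\right)^{1/2}.
\]
I would obtain this by the classical discriminant argument: the map $t\mapsto \sum_{i=1}^m (x_i+t\,y_i)^2$ is a non-negative quadratic polynomial in $t$ (or, if all $y_i=0$, the inequality is trivial), so its discriminant is $\le 0$, which rearranges to exactly the asserted bound.

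Granting Cauchy--Schwarz, expanding $\sum_i(x_i+y_i)^2 = \sum_i x_i^2 + 2\sum_i x_i y_i + \sum_i y_i^2$ and bounding the cross term by $2\left(\sum_i x_i^2\right)^{1/2}\left(\sum_i y_i^2\right)^{1/2}$ shows that $\sum_i(x_i+y_i)^2$ does not exceed the square of $\left(\sum_i x_i^2\right)^{1/2}+\left(\sum_i y_i^2\right)^{1/2}$; taking non-negative square roots yields Minkowski, hence the triangle inequality, and substituting back $x_i=a_i-b_i$, $y_i=b_i-c_i$ completes the proof. Everything apart from Cauchy--Schwarz is routine algebra; the discriminant trick is where the one real idea lies. (I note that the earlier coordinatewise bounds $d(x,y)\ge|x_a-y_a|$ and $d(x,y)\le\sqrt{n}\,\max_a|x_a-y_a|$ do not by themselves deliver the triangle inequality, so this argument appears unavoidable.)
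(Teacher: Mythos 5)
Your proof is correct and follows essentially the route the paper itself takes: although the paper states this theorem without supplying a proof, the argument you give (Cauchy--Schwarz via the non-negative quadratic $t\mapsto\sum_{i}(x_i+t\,y_i)^2$ and its discriminant, expansion of the cross term to get Minkowski, then the substitution $x_i=a_i-b_i$, $y_i=b_i-c_i$) is precisely the one the paper uses immediately afterwards to prove that the infinite-dimensional space $H$ is a metric space. Your version is simply the finite-dimensional truncation of that argument, so nothing is missing.
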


\begin{remark}
For $m=1$, the space $E^1$ coincides with the real line $\mathbf{R}$ with the standard metric. For $m=2$, the space $E^2$ coincides with the plane $\mathbf{R}^2$ with the usual Euclidean metric.
\end{remark}

Our next goal is to define a metric space $H$, sometimes called \textbf{Hilbert space}, which contains isometric copies of all finite-dimensional Euclidean spaces $(\mathbf{R}^n, d^n)$ as subspaces. A point $u \in H$ is a sequence
\[
u = (u_1, u_2, u_3, \ldots)
\]
of real numbers such that the series
\[
\sum_{i=1}^{\infty} u_i^2
\]
converges.

\begin{example}[Finite-dimensional embedding]
If $u = (u_1, \ldots, u_m, 0, 0, \ldots)$ is a sequence where only the first $m$ terms are nonzero, then $u$ can be identified with the point $(u_1, \ldots, u_m)$ in $E^m$. This shows that each finite-dimensional Euclidean space naturally embeds in $H$.
\end{example}

Given two points $u = (u_1, u_2, \ldots)$ and $v = (v_1, v_2, \ldots)$ in $H$, we intend to define a metric by
\[
d(u, v) = \left[ \sum_{i=1}^{\infty} (u_i - v_i)^2 \right]^{1/2}.
\]

\begin{remark}
Before this definition makes sense, we must ensure that the infinite series converges. Convergence will follow from the classical Cauchy-Schwarz inequality, which we state next.
\end{remark}

\begin{lemma}[Cauchy-Schwarz inequality, see \cite{Mendelson}]
Let $(u_1, u_2, \ldots, u_n)$ and $(v_1, v_2, \ldots, v_n)$ be $n$-tuples of real numbers. Then
\[
\sum_{i=1}^{n} u_i v_i \le \left( \sum_{i=1}^{n} u_i^2 \right)^{1/2} \left( \sum_{i=1}^{n} v_i^2 \right)^{1/2}.
\]
\end{lemma}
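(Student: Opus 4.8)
The plan is to use the classical ``discriminant'' trick: manufacture a non-negative quadratic polynomial in a real parameter $t$ from the two tuples, and then extract the inequality from the fact that such a polynomial cannot have two distinct real roots. Concretely, I would set
\[
A = \sum_{i=1}^{n} u_i^2, \qquad B = \sum_{i=1}^{n} u_i v_i, \qquad C = \sum_{i=1}^{n} v_i^2,
\]
and consider $p(t) = \sum_{i=1}^{n} (t u_i + v_i)^2$. Expanding each square and regrouping the finite sum gives $p(t) = A t^2 + 2 B t + C$. Since $p(t)$ is a sum of squares of real numbers, $p(t) \ge 0$ for every $t \in \mathbb{R}$.

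Before invoking the discriminant, I would dispose of the degenerate case $A = 0$: then every $u_i = 0$, so both sides of the claimed inequality equal $0$ and it holds (with equality). Assuming therefore $A > 0$, the function $p$ is a genuine quadratic in $t$ that is non-negative on all of $\mathbb{R}$, which forces its discriminant to be non-positive: $(2B)^2 - 4AC \le 0$, i.e. $B^2 \le AC$. Taking square roots and using $B \le |B| = (B^2)^{1/2}$ then yields
\[
\sum_{i=1}^{n} u_i v_i = B \le (AC)^{1/2} = \left( \sum_{i=1}^{n} u_i^2 \right)^{1/2} \left( \sum_{i=1}^{n} v_i^2 \right)^{1/2},
\]
which is exactly the assertion.

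The step I expect to require the most care is not deep but organizational: one must justify the expansion $\sum (t u_i + v_i)^2 = A t^2 + 2Bt + C$ by appealing to the finiteness of the index set (so that rearrangement of terms is legitimate), and one must treat the case $A = 0$ separately so that the inference ``non-negative quadratic $\Rightarrow$ non-positive discriminant'' is actually valid. Two alternative routes are worth noting. First, one can normalize: after handling $A = 0$ or $C = 0$ trivially, rescale so that $A = C = 1$ and apply the elementary bound $u_i v_i \le \tfrac{1}{2}(u_i^2 + v_i^2)$ term by term, summing to get $\sum u_i v_i \le \tfrac{1}{2}(1 + 1) = 1$; this is perhaps the slickest version but is essentially the same idea. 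Second, one can verify Lagrange's identity $AC - B^2 = \sum_{i < j} (u_i v_j - u_j v_i)^2 \ge 0$, which makes the inequality immediate at the cost of an algebraic identity. I would present the discriminant argument as the main proof, since it is self-contained and the computation is short.
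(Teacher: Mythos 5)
Your proposal is correct and follows essentially the same route as the paper: form the non-negative quadratic $\sum_i (t u_i + v_i)^2$ in a real parameter, conclude the discriminant is non-positive, and deduce $B^2 \le AC$. Your extra care with the degenerate case $A = 0$ and the final step $B \le |B| \le (AC)^{1/2}$ only tightens details the paper's version leaves implicit.
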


\begin{proof}
Consider, for an arbitrary real number $\lambda$, the nonnegative sum
\[
\sum_{i=1}^{n} (u_i + \lambda v_i)^2 \ge 0.
\]
Expanding, we get
\[
\sum_{i=1}^{n} u_i^2 + 2 \lambda \sum_{i=1}^{n} u_i v_i + \lambda^2 \sum_{i=1}^{n} v_i^2 \ge 0.
\]
Since this quadratic in $\lambda$ has a non-positive discriminant, it follows that
\[
\left( \sum_{i=1}^{n} u_i v_i \right)^2 \le \left( \sum_{i=1}^{n} u_i^2 \right) \left( \sum_{i=1}^{n} v_i^2 \right),
\]
as desired.
\end{proof}

\begin{corollary}
Let $u = (u_1, u_2, \ldots)$ and $v = (v_1, v_2, \ldots)$ belong to $H$, and define
\[
U = \sum_{i=1}^{\infty} u_i^2, \quad V = \sum_{i=1}^{\infty} v_i^2.
\]
Then the series $\sum_{i=1}^{\infty} u_i v_i$ converges absolutely, with
\[
\sum_{i=1}^{\infty} |u_i v_i| \le \sqrt{U} \, \sqrt{V}.
\]
\end{corollary}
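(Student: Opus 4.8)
The plan is to reduce the infinite statement to the finite Cauchy--Schwarz inequality already established in the preceding Lemma, by passing through partial sums. First I would fix an arbitrary positive integer $n$ and apply the Lemma to the $n$-tuples $(|u_1|, \dots, |u_n|)$ and $(|v_1|, \dots, |v_n|)$. Since $|u_i|^2 = u_i^2$ and $|v_i|^2 = v_i^2$, and $|u_i|\,|v_i| = |u_i v_i|$, this immediately gives
\[
\sum_{i=1}^{n} |u_i v_i| \;\le\; \left( \sum_{i=1}^{n} u_i^2 \right)^{1/2} \left( \sum_{i=1}^{n} v_i^2 \right)^{1/2}.
\]

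Next I would bound the right-hand side independently of $n$. Because all terms $u_i^2$ and $v_i^2$ are non-negative and the series $\sum u_i^2 = U$ and $\sum v_i^2 = V$ converge (this is exactly the hypothesis $u, v \in H$), every partial sum satisfies $\sum_{i=1}^{n} u_i^2 \le U$ and $\sum_{i=1}^{n} v_i^2 \le V$. Substituting, I obtain $\sum_{i=1}^{n} |u_i v_i| \le \sqrt{U}\,\sqrt{V}$ for all $n$.

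Finally I would invoke the elementary criterion that a series of non-negative real terms whose partial sums are bounded above is convergent: the partial sums $s_n = \sum_{i=1}^{n} |u_i v_i|$ form a non-decreasing sequence bounded by $\sqrt{U}\,\sqrt{V}$, hence they converge to a limit $\sum_{i=1}^{\infty} |u_i v_i| \le \sqrt{U}\,\sqrt{V}$. This limit being finite is precisely the absolute convergence of $\sum_{i=1}^{\infty} u_i v_i$, and the stated bound follows by letting $n \to \infty$ in the displayed inequality. I do not expect any genuine obstacle here: the only point deserving a word of care is that the estimate on the partial sums is uniform in $n$, which is what makes the limit pass through cleanly; everything else is a direct appeal to the Lemma and to the monotone-convergence test for series of non-negative terms.
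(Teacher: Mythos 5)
Your proposal is correct and follows essentially the same route as the paper: apply the finite Cauchy--Schwarz lemma to the truncated tuples (of absolute values), bound the resulting partial sums uniformly by $\sqrt{U}\,\sqrt{V}$, and conclude by monotone boundedness of the non-negative partial sums. The only difference is that you make explicit the application to $(|u_1|,\dots,|u_n|)$ and $(|v_1|,\dots,|v_n|)$, which the paper leaves implicit — a small but welcome clarification.
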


\begin{proof}
For each positive integer $n$,
\[
\sum_{i=1}^{n} |u_i v_i| \le \left( \sum_{i=1}^{n} u_i^2 \right)^{1/2} \left( \sum_{i=1}^{n} v_i^2 \right)^{1/2} \le \sqrt{U} \, \sqrt{V}.
\]
Hence the partial sums of $\sum |u_i v_i|$ are bounded, implying that the series converges.
\end{proof}

Furthermore, let $\alpha, \beta \in \mathbf{R}$ and define
\[
\alpha u + \beta v = (\alpha u_1 + \beta v_1, \alpha u_2 + \beta v_2, \ldots).
\]
Then $\alpha u + \beta v \in H$, because the series
\[
\sum_{i=1}^{\infty} (\alpha u_i + \beta v_i)^2
\]
is absolutely convergent. Indeed, expanding the square gives three absolutely convergent series:
\[
\sum_{i=1}^{\infty} (\alpha u_i + \beta v_i)^2 = \alpha^2 \sum_{i=1}^{\infty} u_i^2 + 2 \alpha \beta \sum_{i=1}^{\infty} u_i v_i + \beta^2 \sum_{i=1}^{\infty} v_i^2.
\]

In particular, for $\alpha = \beta = 1$, we see that $u + v \in H$, and
\[
\sum_{i=1}^{\infty} (u_i + v_i)^2 = \sum_{i=1}^{\infty} u_i^2 + 2 \sum_{i=1}^{\infty} u_i v_i + \sum_{i=1}^{\infty} v_i^2.
\]
By Corollary 8.3, we have
\[
\sum_{i=1}^{\infty} (u_i + v_i)^2 \le \sum_{i=1}^{\infty} u_i^2 + 2 \left( \sum_{i=1}^{\infty} u_i^2 \right)^{1/2} \left( \sum_{i=1}^{\infty} v_i^2 \right)^{1/2} + \sum_{i=1}^{\infty} v_i^2 = (U^{1/2} + V^{1/2})^2.
\]

\begin{corollary}[Triangle Inequality in Hilbert Space, see \cite{Mendelson}]
\[
\left( \sum_{i=1}^{\infty} (u_i + v_i)^2 \right)^{1/2} \le U^{1/2} + V^{1/2}.
\]
\end{corollary}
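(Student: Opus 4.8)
The plan is to read off the inequality directly from the chain of estimates already assembled in the paragraph preceding the statement. Recall that, for $u,v\in H$, we established $u+v\in H$ together with the identity
\[
\sum_{i=1}^{\infty}(u_i+v_i)^2=\sum_{i=1}^{\infty}u_i^2+2\sum_{i=1}^{\infty}u_iv_i+\sum_{i=1}^{\infty}v_i^2,
\]
where each of the three series on the right converges (the middle one absolutely, by Corollary 8.3). The decisive step has in fact already been carried out: applying the bound $\sum_{i=1}^{\infty}u_iv_i\le\bigl(\sum u_i^2\bigr)^{1/2}\bigl(\sum v_i^2\bigr)^{1/2}=\sqrt{U}\sqrt{V}$ from Corollary 8.3 to the cross term yields
\[
\sum_{i=1}^{\infty}(u_i+v_i)^2\le U+2\sqrt{U}\sqrt{V}+V=\bigl(U^{1/2}+V^{1/2}\bigr)^2 .
\]

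First I would note that both sides of this last inequality are nonnegative: the left side is a sum of squares, and the right side is a square of the real number $U^{1/2}+V^{1/2}\ge 0$. Since the square-root function $t\mapsto\sqrt{t}$ is monotonically increasing on $[0,\infty)$, I may apply it to both sides of the inequality without reversing it, obtaining
\[
\left(\sum_{i=1}^{\infty}(u_i+v_i)^2\right)^{1/2}\le\left(\bigl(U^{1/2}+V^{1/2}\bigr)^2\right)^{1/2}=U^{1/2}+V^{1/2},
\]
which is exactly the claimed inequality. The final equality uses $\sqrt{t^2}=t$ for $t\ge 0$, valid here precisely because $U^{1/2}+V^{1/2}\ge 0$.

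There is essentially no obstacle in this argument; the only point requiring a moment's care is that taking square roots is legitimate, which is guaranteed by the nonnegativity just noted. All convergence issues — namely that $\sum(u_i+v_i)^2$ is finite, so that its square root is a well-defined real number — have already been settled in the discussion leading up to Corollary 8.3 and its consequences, so they need not be revisited here. One could also remark, as a closing observation, that this inequality is what licenses the definition $d(u,v)=\bigl(\sum(u_i-v_i)^2\bigr)^{1/2}$ to satisfy the triangle inequality on $H$: replacing $u$ by $u-w$ and $v$ by $w-v$ gives $d(u,v)\le d(u,w)+d(w,v)$.
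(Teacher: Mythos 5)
Your proposal is correct and follows essentially the same route as the paper: the displayed expansion of $\sum (u_i+v_i)^2$ together with the bound on the cross term from Corollary 8.3 gives $\sum (u_i+v_i)^2 \le (U^{1/2}+V^{1/2})^2$, and the corollary is obtained by taking square roots. Your only addition is to spell out explicitly that taking square roots is legitimate by monotonicity and nonnegativity, which the paper leaves implicit.
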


\begin{remark}
This inequality generalizes the familiar triangle inequality from finite-dimensional Euclidean spaces to the infinite-dimensional Hilbert space $H$.
\end{remark}

\begin{theorem}[see \cite{Mendelson}]
The space $(H, d)$ is a metric space, where
\[
d(u, v) = \left( \sum_{i=1}^{\infty} (u_i - v_i)^2 \right)^{1/2}.
\]
\end{theorem}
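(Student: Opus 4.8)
The plan is to check that $d$ is well-defined and then verify the four metric axioms, leaning heavily on the work already done with the Cauchy--Schwarz inequality. For well-definedness, I would invoke the observation recorded just before Corollary~8.4 that $\alpha u + \beta v \in H$ for all $\alpha, \beta \in \mathbf{R}$ and all $u, v \in H$: taking $\alpha = 1$ and $\beta = -1$ shows that $u - v = (u_1 - v_1, u_2 - v_2, \ldots) \in H$, so the series $\sum_{i=1}^\infty (u_i - v_i)^2$ converges and $d(u,v)$ is a well-defined real number. This also settles the nonnegativity axiom, since $d(u,v)$ is by definition the nonnegative square root of a convergent series of nonnegative terms.

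Next I would dispose of the two remaining elementary axioms. If $d(u,v) = 0$, then $\sum_{i=1}^\infty (u_i - v_i)^2 = 0$; because every summand is nonnegative, each $(u_i - v_i)^2 = 0$, hence $u_i = v_i$ for every $i$, so $u = v$. The converse is immediate, and symmetry follows termwise from $(u_i - v_i)^2 = (v_i - u_i)^2$, giving $d(u,v) = d(v,u)$.

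The only substantive step is the triangle inequality, and it is essentially packaged in Corollary~8.4. Given $u, v, w \in H$, put $a = u - v$ and $b = v - w$; both lie in $H$ by the same linear-combination argument, and $a + b = u - w$. Applying Corollary~8.4 to $a$ and $b$ (so that the quantities $U^{1/2}$ and $V^{1/2}$ appearing there become $d(u,v)$ and $d(v,w)$ respectively) yields
\[
d(u,w) = \left( \sum_{i=1}^\infty (a_i + b_i)^2 \right)^{1/2} \le \left( \sum_{i=1}^\infty a_i^2 \right)^{1/2} + \left( \sum_{i=1}^\infty b_i^2 \right)^{1/2} = d(u,v) + d(v,w),
\]
which is exactly the triangle inequality. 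With all four axioms verified, $(H,d)$ is a metric space. I do not expect a genuine obstacle here: the real difficulty was absorbed into the proofs of the Cauchy--Schwarz inequality and its corollaries, so this theorem reduces to assembling those pieces and keeping track of the bookkeeping fact that a difference of points of $H$ again belongs to $H$.
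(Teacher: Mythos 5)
Your proposal is correct and follows essentially the same route as the paper: the elementary axioms are checked directly, and the triangle inequality is obtained by writing the differences $u-v$ and $v-w$ as points of $H$ whose sum is $u-w$ and applying the corollary $\left(\sum_{i}(a_i+b_i)^2\right)^{1/2}\le \left(\sum_i a_i^2\right)^{1/2}+\left(\sum_i b_i^2\right)^{1/2}$ derived from Cauchy--Schwarz. Your explicit remark that $u-v\in H$ (so $d$ is well defined) is a small bookkeeping point the paper leaves implicit, but it does not change the argument.
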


\begin{proof}
The function $d$ clearly satisfies non-negativity, symmetry, and $d(u, v) = 0$ if and only if $u = v$.  

To verify the triangle inequality, let $a = (a_1, a_2, \ldots)$, $b = (b_1, b_2, \ldots)$, and $c = (c_1, c_2, \ldots)$ be elements of $H$. Set
\[
u = a - c, \quad v = c - b,
\]
so that $u_i = a_i - c_i$ and $v_i = c_i - b_i$. Then $u_i + v_i = a_i - b_i$, and applying Corollary 8.3 gives
\[
d(a, b) = \left( \sum_{i=1}^{\infty} (a_i - b_i)^2 \right)^{1/2} = \left( \sum_{i=1}^{\infty} (u_i + v_i)^2 \right)^{1/2} \le \left( \sum_{i=1}^{\infty} u_i^2 \right)^{1/2} + \left( \sum_{i=1}^{\infty} v_i^2 \right)^{1/2} = d(a, c) + d(c, b).
\]
Thus $d$ satisfies the triangle inequality, completing the proof that $(H, d)$ is a metric space.
\end{proof}

\begin{remark}
This theorem shows that $H$ is an infinite-dimensional generalization of Euclidean space. Every finite-dimensional Euclidean space $E^m$ can be viewed as a subspace of $H$, and the metric $d$ extends the standard Euclidean metric to sequences of infinite length.
\end{remark}

Let \( E^n \) be the collection of points \( u = (u_1, u_2, \ldots) \in H \) such that \( u_j = 0 \) for \( j > n \). To each point \( a = (a_1, a_2, \ldots, a_n) \in \mathbb{R}^n \) we can associate the point \( h(a) = (a_1, a_2, \ldots, a_n, 0, 0, \ldots) \in E^n \). Clearly \( h \) is a one-one mapping of \( \mathbb{R}^n \) onto the subspace \( E^n \) of \( H \). Using \( d'(a, b) = \left[ \sum_{i=1}^n (a_i - b_i)^2 \right]^{1/2} \) in \( \mathbb{R}^n \), \( d'(a, b) = d(h(a), h(b)) \). Since \( E^n \) is a metric space, \( (\mathbb{R}^n, d') \) is a metric space and \( h \) is an isometry of \( (\mathbb{R}^n, d') \) with \( (E^n, d|_{E^n}) \).

\begin{example}[Hilbert space isometric to a proper subspace, see \cite{Schaum}]
Let $H^*$ denote the proper subspace of $H$ consisting of all sequences whose first coordinate is zero:
\[
H^* = \{ \langle 0, a_2, a_3, \ldots \rangle \in H \}.
\] 
Consider the mapping 
\[
f : H \longrightarrow H^*, \quad f(\langle a_1, a_2, a_3, \ldots \rangle) = \langle 0, a_1, a_2, \ldots \rangle.
\] 
It is straightforward to check that $f$ is one-to-one, onto, and **distance-preserving**. Hence, the Hilbert space $H$ is **isometric** to a proper subspace of itself, illustrating the counterintuitive property of infinite-dimensional spaces that they can contain subspaces isometric to themselves.
\end{example}

\begin{definition}[Inner product space, see \cite{Chen}]
Let $E$ be a complex vector space. A mapping 
\[
\langle \cdot, \cdot \rangle : E \times E \longrightarrow \mathbb{C}
\] 
is called an \emph{inner product} on $E$ if for all $x, y, z \in E$ and $\alpha, \beta \in \mathbb{C}$ the following conditions hold:
\begin{enumerate}
    \item[(a)] Conjugate symmetry: $\langle x, y \rangle = \overline{\langle y, x \rangle}$;
    \item[(b)] Linearity in the first argument: $\langle \alpha x + \beta y, z \rangle = \alpha \langle x, z \rangle + \beta \langle y, z \rangle$;
    \item[(c)] Positive semi-definiteness: $\langle x, x \rangle \geq 0$;
    \item[(d)] Definiteness: $\langle x, x \rangle = 0$ implies $x = 0$.
\end{enumerate}
\end{definition}

\begin{example}[$N$-dimensional inner product space, see \cite{Chen}]
Consider $\mathbb{C}^{N}$, the space of all $N$-tuples $x = (x_1, \ldots, x_N)$ of complex numbers. Define the inner product by
\[
\langle x, y \rangle = \sum_{k=1}^{N} x_k \overline{y_k}.
\] 
This makes $\mathbb{C}^N$ a finite-dimensional inner product space.
\end{example}

\begin{example}[Infinite-dimensional inner product space, see \cite{Chen}]
Let $\ell^2$ denote the space of all sequences $x = (x_1, x_2, \ldots)$ of complex numbers such that
\[
\sum_{k=1}^{\infty} |x_k|^2 < \infty.
\] 
Define the inner product
\[
\langle x, y \rangle = \sum_{k=1}^{\infty} x_k \overline{y_k}.
\] 
This turns $\ell^2$ into an infinite-dimensional inner product space.
\end{example}

\begin{example}[Function space inner product, see \cite{Chen}]
Consider the space $\mathcal{C}([a,b])$ of all continuous complex-valued functions on $[a,b]$. Define
\[
\langle f, g \rangle = \int_a^b f(x) \overline{g(x)} \, dx.
\] 
Then $(\mathcal{C}([a,b]), \langle \cdot, \cdot \rangle)$ is an inner product space. This particular inner product will be frequently used in later constructions.
\end{example}

\begin{remark}
An inner product space is sometimes called a \emph{pre-Hilbert space}. Unlike ordinary finite-dimensional vector spaces, inner product spaces can be **infinite-dimensional**, such as spaces of sequences or functions. This property motivates the study of Hilbert spaces, which are complete inner product spaces.
\end{remark}

\begin{remark}
Given an inner product space $(E, \langle \cdot, \cdot \rangle)$, one can define a norm by
\[
\|x\| = \sqrt{\langle x, x \rangle}.
\] 
Before proving that this indeed defines a norm, it is necessary to establish **Schwarz's inequality** (Cauchy-Schwarz inequality) in inner product spaces.
\end{remark}

\begin{definition}[Norm in an inner product space]
Let $x$ be an element of an inner product space $H$. The quantity
\[
\|x\| = \sqrt{\langle x, x \rangle}
\]
is called the \textbf{norm} or \textbf{length} of $x$. Geometrically, it represents the distance of the point $x$ from the origin. This norm induces a natural metric on $H$ given by
\[
d(x, y) = \|x - y\|,
\]
so that $H$ becomes a metric space under the norm.
\end{definition}

\begin{example}[Norms in common spaces]
\leavevmode
\begin{enumerate}
    \item In $\mathbb{R}^n$ with the standard inner product 
    \(\langle x, y \rangle = \sum_{i=1}^{n} x_i y_i\), the norm is
    \[
    \|x\| = \sqrt{x_1^2 + x_2^2 + \cdots + x_n^2},
    \]
    corresponding to the usual Euclidean distance.

    \item In $\mathbb{C}^n$ with inner product 
    \(\langle x, y \rangle = \sum_{i=1}^{n} x_i \overline{y_i}\), the norm is
    \[
    \|x\| = \sqrt{\sum_{i=1}^{n} |x_i|^2}.
    \]

    \item In $L^2[a,b]$, the space of square-integrable functions, the inner product
    \(\langle f, g \rangle = \int_a^b f(x) \overline{g(x)} \, dx\) induces the norm
    \[
    \|f\| = \left( \int_a^b |f(x)|^2 \, dx \right)^{1/2}.
    \]
\end{enumerate}
\end{example}

\begin{theorem}[Cauchy–Schwarz inequality]
Let $H$ be an inner product space over $\mathbb{R}$ or $\mathbb{C}$. For all $x, y \in H$,
\[
|\langle x, y \rangle| \leq \|x\| \, \|y\|.
\] 
Equality holds if and only if $x$ and $y$ are linearly dependent.
\end{theorem}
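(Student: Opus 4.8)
The plan is to reduce everything to the single elementary fact built into the axioms — that $\langle z,z\rangle \ge 0$ for every $z\in H$ — applied to a well-chosen vector of the form $z = x - \lambda y$. First I would dispose of the degenerate case $y=0$: then $\langle x,y\rangle = 0$ by linearity, $\|y\|=0$, both sides of the inequality vanish, and $x,y$ are trivially linearly dependent, so the equality clause is also satisfied. Henceforth assume $y\neq 0$, so that $\|y\|^2 = \langle y,y\rangle > 0$ by definiteness.

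Next, for an arbitrary scalar $\lambda$ I would expand
\[
0 \le \langle x-\lambda y,\; x-\lambda y\rangle = \|x\|^2 - \bar\lambda\,\langle x,y\rangle - \lambda\,\overline{\langle x,y\rangle} + |\lambda|^2\,\|y\|^2 ,
\]
using linearity in the first argument together with conjugate symmetry, which forces conjugate-linearity in the second argument (so $\langle x,\lambda y\rangle = \bar\lambda\langle x,y\rangle$ and $\langle \lambda y,x\rangle = \lambda\overline{\langle x,y\rangle}$). I would then substitute the optimal choice $\lambda = \langle x,y\rangle/\|y\|^2$. With this $\lambda$ the two cross terms combine to $-2\,|\langle x,y\rangle|^2/\|y\|^2$ and the last term becomes $+\,|\langle x,y\rangle|^2/\|y\|^2$, leaving
\[
0 \le \|x\|^2 - \frac{|\langle x,y\rangle|^2}{\|y\|^2},
\]
which rearranges to $|\langle x,y\rangle|^2 \le \|x\|^2\|y\|^2$; taking nonnegative square roots gives the asserted inequality.

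For the equality case I would trace back through the argument. If equality holds in the chain above, then $\langle x-\lambda y,\,x-\lambda y\rangle = 0$, hence $x-\lambda y = 0$ by definiteness, so $x = \lambda y$ and $\{x,y\}$ is linearly dependent. Conversely, if $x = \mu y$ for some scalar $\mu$ (the sub-case $y=0$ already handled), a direct computation gives $|\langle x,y\rangle| = |\mu|\,\|y\|^2 = \|\mu y\|\,\|y\| = \|x\|\,\|y\|$, so equality is attained.

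The algebra is entirely routine; the only points requiring genuine care are the complex bookkeeping — tracking which slot is linear and which is conjugate-linear while expanding $\langle x-\lambda y,\,x-\lambda y\rangle$, and verifying that the chosen $\lambda$ really makes $\bar\lambda\langle x,y\rangle$ real so that the term-by-term cancellation is legitimate — and phrasing the equality characterization so that it correctly subsumes the corner cases $y=0$ and $x=0$, where "linearly dependent" must be understood to permit either vector to be the zero vector.
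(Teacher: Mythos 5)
Your proof is correct and follows essentially the same route as the paper's: both rest on $0 \le \|x-\lambda y\|^2$ with the optimal choice $\lambda = \langle x,y\rangle/\|y\|^2$ after disposing of $y=0$. You go slightly further in two useful ways — you actually prove the equality clause (``if and only if $x,y$ are linearly dependent''), which the paper's proof never addresses, and your careful conjugate-linearity bookkeeping gives the cross term correctly as $-2\operatorname{Re}\bigl(\bar\lambda\,\langle x,y\rangle\bigr)$, whereas the paper writes $\operatorname{Re}\bigl(\lambda\,\langle x,y\rangle\bigr)$, a small conjugation slip.
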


\begin{proof}
If $y = 0$, the inequality is immediate. Otherwise, for any scalar $\lambda$,
\[
0 \leq \|x - \lambda y\|^2 = \langle x - \lambda y, x - \lambda y \rangle = \|x\|^2 - 2 \operatorname{Re}(\lambda \langle x, y \rangle) + |\lambda|^2 \|y\|^2.
\]
Choosing \(\lambda = \frac{\langle x, y \rangle}{\|y\|^2}\) minimizes the right-hand side, giving
\[
0 \leq \|x\|^2 - \frac{|\langle x, y \rangle|^2}{\|y\|^2}.
\]
Multiplying both sides by $\|y\|^2$ yields
\[
|\langle x, y \rangle|^2 \leq \|x\|^2 \|y\|^2,
\]
and taking square roots completes the proof.
\end{proof}

\begin{example}[Cauchy–Schwarz in $\mathbb{R}^n$]
For $x, y \in \mathbb{R}^n$ with the standard inner product,
\[
\left| \sum_{i=1}^{n} x_i y_i \right| \leq 
\sqrt{\sum_{i=1}^{n} x_i^2} \, \sqrt{\sum_{i=1}^{n} y_i^2},
\]
which is the classical vector Cauchy–Schwarz inequality.
\end{example}

\begin{theorem}[Norm induced by inner product, see \cite{Chen}]
Every inner product space $(H, \langle \cdot, \cdot \rangle)$ is also a normed vector space with norm
\[
\|x\| = \sqrt{\langle x, x \rangle}.
\]
\end{theorem}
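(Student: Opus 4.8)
The plan is to verify, one at a time, the three defining properties of a norm — positivity together with definiteness, absolute homogeneity, and the triangle inequality — using nothing beyond the four inner-product axioms and the Cauchy--Schwarz inequality proved above. Throughout, write $\|x\| = \sqrt{\langle x, x\rangle}$, which is meaningful precisely because of axiom (c).

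First I would dispatch the elementary parts. Positivity is immediate from positive semi-definiteness: $\langle x, x\rangle \ge 0$, so $\|x\|$ is a well-defined non-negative real number. For definiteness, $\|x\| = 0$ gives $\langle x, x\rangle = 0$, hence $x = 0$ by axiom (d); conversely $\|0\| = 0$ since linearity in the first argument applied to $0 = 0\cdot 0$ yields $\langle 0, z\rangle = 0$ for every $z$, in particular $\langle 0,0\rangle = 0$. For absolute homogeneity, expand $\|\alpha x\|^2 = \langle \alpha x, \alpha x\rangle$; linearity in the first argument pulls out $\alpha$, and conjugate symmetry turns linearity in the first slot into conjugate-linearity in the second, so this equals $\alpha \overline{\alpha}\,\langle x,x\rangle = |\alpha|^2\,\|x\|^2$, and taking square roots gives $\|\alpha x\| = |\alpha|\,\|x\|$.

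The only step requiring genuine work is the triangle inequality $\|x + y\| \le \|x\| + \|y\|$, and this is where the Cauchy--Schwarz inequality does the heavy lifting. I would expand $\|x+y\|^2 = \langle x+y,\, x+y\rangle = \|x\|^2 + \langle x, y\rangle + \langle y, x\rangle + \|y\|^2 = \|x\|^2 + 2\operatorname{Re}\langle x, y\rangle + \|y\|^2$, then estimate $\operatorname{Re}\langle x, y\rangle \le |\langle x, y\rangle| \le \|x\|\,\|y\|$ by the already-established Cauchy--Schwarz theorem, to conclude $\|x+y\|^2 \le \|x\|^2 + 2\|x\|\,\|y\| + \|y\|^2 = (\|x\| + \|y\|)^2$, and finish by taking square roots. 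The so-called hard part is thus simply the dependence on Cauchy--Schwarz — but since that inequality is in hand, the argument is short; the one subtlety to flag is the appearance of $\operatorname{Re}$ in the complex case, which is why one bounds the real part of $\langle x, y\rangle$ rather than $\langle x, y\rangle$ directly. Assembling these three verifications shows $(H, \|\cdot\|)$ is a normed vector space, completing the proof.
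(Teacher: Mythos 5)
Your proposal is correct and follows essentially the same route as the paper: verify definiteness and absolute homogeneity directly from the inner-product axioms, then expand $\|x+y\|^2 = \|x\|^2 + 2\operatorname{Re}\langle x,y\rangle + \|y\|^2$ and invoke the Cauchy--Schwarz inequality to obtain the triangle inequality. The only difference is cosmetic — you supply slightly more detail (e.g.\ checking $\langle 0,0\rangle = 0$ and flagging the role of $\operatorname{Re}$), which the paper leaves implicit.
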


\begin{proof}
We verify the norm axioms:
\begin{enumerate}
    \item \(\|x\| = 0 \iff x = 0\) (from definiteness of the inner product);
    \item \(\|\alpha x\| = |\alpha| \, \|x\|\) for all scalars $\alpha$:
    \[
    \|\alpha x\| = \sqrt{\langle \alpha x, \alpha x \rangle} = \sqrt{\alpha \overline{\alpha} \langle x, x \rangle} = |\alpha| \, \|x\|;
    \]
    \item (Triangle inequality) For $x, y \in H$:
    \[
    \begin{aligned}
    \|x + y\|^2 &= \langle x + y, x + y \rangle = \|x\|^2 + 2 \operatorname{Re} \langle x, y \rangle + \|y\|^2 \\
    &\leq \|x\|^2 + 2|\langle x, y \rangle| + \|y\|^2 \\
    &\leq \|x\|^2 + 2\|x\| \|y\| + \|y\|^2 \quad \text{(by Cauchy–Schwarz)} \\
    &= (\|x\| + \|y\|)^2.
    \end{aligned}
    \]
\end{enumerate}
Taking square roots gives the triangle inequality \(\|x + y\| \leq \|x\| + \|y\|\).
\end{proof}

\begin{theorem}[Parallelogram Law, see \cite{Chen}]
Let $x$ and $y$ be elements of an inner product space $H$. Then the following identity holds:
\[
\|x + y\|^{2} + \|x - y\|^{2} = 2(\|x\|^{2} + \|y\|^{2}).
\]
\end{theorem}

\begin{remark}
The Parallelogram Law illustrates a fundamental geometric property of inner product spaces: the sum of the squares of the diagonals of a parallelogram equals the sum of the squares of all sides. This identity is crucial in characterizing norms that are induced by an inner product.
\end{remark}

\begin{theorem}[Pythagorean Theorem in Inner Product Spaces, see \cite{Chen}]
Let $x$ and $y$ be orthogonal vectors in an inner product space $H$, i.e., $\langle x, y \rangle = 0$. Then
\[
\|x + y\|^{2} = \|x\|^{2} + \|y\|^{2}.
\]
\end{theorem}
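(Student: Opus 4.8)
The plan is to compute $\|x+y\|^2$ directly from the definition $\|z\|^2 = \langle z, z\rangle$ and expand using the algebraic properties of the inner product. First I would write
\[
\|x+y\|^2 = \langle x+y,\, x+y\rangle
= \langle x, x\rangle + \langle x, y\rangle + \langle y, x\rangle + \langle y, y\rangle,
\]
where the expansion uses linearity in the first argument together with conjugate symmetry to split the second argument as well. Recognising $\langle x,x\rangle = \|x\|^2$ and $\langle y,y\rangle = \|y\|^2$, this becomes $\|x\|^2 + \langle x,y\rangle + \langle y,x\rangle + \|y\|^2$.

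Next I would handle the cross terms. By conjugate symmetry (condition (a) of the inner product definition), $\langle y,x\rangle = \overline{\langle x,y\rangle}$, so $\langle x,y\rangle + \langle y,x\rangle = 2\operatorname{Re}\langle x,y\rangle$. The orthogonality hypothesis $\langle x,y\rangle = 0$ then forces this quantity to be $0$, and we are left with $\|x+y\|^2 = \|x\|^2 + \|y\|^2$, which is exactly the claim.

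Honestly, there is no serious obstacle here: the result is an immediate consequence of the bilinearity-type expansion already carried out in the proof of the triangle inequality for the induced norm. The only point requiring a moment's care is the bookkeeping with the complex case — making sure the two mixed terms combine to $2\operatorname{Re}\langle x,y\rangle$ rather than to $2\langle x,y\rangle$, since the inner product is only conjugate-symmetric, not symmetric. Over $\mathbb{R}$ this distinction disappears and the computation is even more transparent. I would present the argument in a single short displayed chain of equalities, invoking conditions (a) and (b) of the inner product definition where the expansion is performed, and then substituting $\langle x,y\rangle = 0$.
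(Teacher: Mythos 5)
Your proof is correct: the paper actually states this theorem without proof, and your expansion $\|x+y\|^2 = \|x\|^2 + 2\operatorname{Re}\langle x, y\rangle + \|y\|^2$ is exactly the computation the paper itself performs when proving the triangle inequality for the induced norm, so your argument is the standard one the authors clearly intend. Your care with the complex case (the cross terms giving $2\operatorname{Re}\langle x,y\rangle$ via conjugate symmetry, since the paper's definition only assumes linearity in the first argument) is precisely the right point to flag, and orthogonality then kills both mixed terms.
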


\begin{remark}
This theorem generalizes the familiar Pythagorean theorem from Euclidean geometry to any inner product space. Orthogonality replaces the role of perpendicularity, and the squared norm corresponds to the squared length.
\end{remark}

\begin{example}
In $\mathbb{R}^2$ with the standard inner product, if $x = (x_1, x_2)$ and $y = (y_1, y_2)$ are perpendicular vectors, then
\[
\|x + y\|^2 = x_1^2 + x_2^2 + y_1^2 + y_2^2 = \|x\|^2 + \|y\|^2,
\]
recovering the classical Pythagorean theorem.
\end{example}

In this chapter, we have extended the concept of Euclidean space to infinite dimensions, introducing the Hilbert space $H$ as a fundamental example. We explored its metric structure, demonstrated that it contains isometric copies of finite-dimensional Euclidean spaces, and examined the rich algebraic and geometric properties arising from its inner product. Key results, including the Triangle Inequality, Cauchy–Schwarz inequality, the Parallelogram Law, and the Pythagorean theorem, illustrate how notions of length, distance, and orthogonality generalize to infinite-dimensional settings. 

These foundational concepts establish a framework for further studies in functional analysis, operator theory, and applications in physics and engineering, where infinite-dimensional spaces naturally arise. By understanding both the metric and inner product structures, one gains the tools necessary to rigorously analyze convergence, orthogonality, and projection in more advanced contexts.

\newpage
\section*{Acknoledgements}
We would like to thank Prof. Daniel Petriceanu and Prof. Andrei Alexandrescu for their valuable advice and guidance throughout the writing process of this paper, as well as their paramount importance in the scene of Romanian high school mathematics. We would also like to thank all of the authors of the textbooks, articles, and journals that led up to the creation of our paper, as well as the publishing houses that made the aforementioned sources of information readily available.

\newpage

\end{document}